\documentclass[12pt]{amsart}
 
\usepackage[margin=1in]{geometry} 
\usepackage{amsmath,amsthm,amssymb,scrextend}
\usepackage{tikz-cd}
\usepackage{comment}
\usepackage{fancyhdr}
\usetikzlibrary{arrows.meta}
\usepackage{enumitem}  
\usepackage{mathabx}
\usetikzlibrary{positioning}
\usetikzlibrary{calc}
\usepackage{verbatim}
\usepackage{tikz}
\usepackage{pgfplots}
\usepackage[mathscr]{euscript}
 \let\mathscr\relax
\usepackage[scr]{rsfso}
\usepackage{multicol}
\usepackage{array}
\usepackage[colorlinks=true, pdfstartview=FitV, linkcolor=blue,
citecolor=blue, urlcolor=blue]{hyperref}

\newtheorem{theorem}{Theorem}

\newtheorem{proposition}[theorem]{Proposition}
\newtheorem{lemma}{Lemma}[theorem]
\newtheorem{definition}{Definition}

\title[a.e. Convergence vs Boundedness]{a.e. Convergence versus Boundedness}

\author{Xinyu Gao}
\address{Department of Mathematics, University of Missouri, Columbia, MO 65211, USA}
\email{xggh8@umsystem.edu}

\author{Loukas Grafakos}
\address{Department of Mathematics, University of Missouri, Columbia, MO 65211, USA}
\email{grafakosl@umsystem.edu}

\thanks{The authors acknowledge the support of the Simons Foundation and the University of Missouri Curators Fund.}

\date{}

\begin{document}

\begin{abstract}  
In this paper we extend  Stein's maximal theorem \cite{Stein1961} 
to the bilinear setting; precisely, let $M$ be a homogeneous space on which a compact abelian group acts transitively and let \( 1 \le p, q \le 2 \), \( 1/2 \le r \le 1 \) be related by \( 1/p + 1/q = 1/r \). 
Given a  family of bounded bilinear operators
$T_m : L^p(M) \times L^q(M) \to L^r(M)$, $ m=1,2,\dots$, 
that commute with translations and converge a.e. as $m\to \infty$, we show that  the
associated maximal operator $T^*(f, g) = \sup_m |T_m(f, g)|$  satisfies a weak-type $L^p(M)\times L^q(M) \to L^{r,\infty}(M)$ estimate. Our proof is based on probabilistic arguments, properties of the Rademacher functions, and measure-theoretic constructions.  The main obstacle we overcome 
is an extension of Stein's lemma that provides an estimate for the $L^2$ norm of a tail of a double 
Rademacher series in terms of the $L^\infty$ norm of the series over a measurable set. 

We also obtain a bilinear analogue of Sawyer's \cite{Sawyer1966} extension of Stein’s theorem to the wider range $1\le p,q<\infty$. This result is valid for positive bilinear operators that commute with a mixing family of measure-preserving transformations.

 Our main  application concerns the boundedness of a maximal  bilinear tail operator associated with an ergodic  
measure--preserving transformation on a finite measure space, which was shown  by Assani and
Buczolich~\cite{Assani2010} to be finite a.e..  Our results imply that  
 this maximal operator is bounded from $L^p\times L^q$ to   $L^r$ for the natural exponent
\( r=(\tfrac1p+\tfrac1q)^{-1} \) when $p,q>1$;  this extends the integrability of
the bilinear maximal tail operator from the subcritical region to the ``natural" boundedness region. 
 We discuss additional applications related to the a.e. convergence of 
bilinear Bochner-Riesz means and other bilinear averages on the torus.
\end{abstract}

\maketitle

\section{Introduction} \label{Intro}
A well-known classical fact is that the $L^p$ boundedness of a maximal family of linear operators together with pointwise convergence for a dense subspace of the domain, implies a.e. convergence for all functions on the domain. In contrast to this classical direction, our results show that a.e. convergence of a bilinear sequence forces weak-type bounds for the associated maximal operator. Motivated by the work of Stein \cite{Stein1961} and Sawyer \cite{Sawyer1966}, we obtain bilinear analogues of their results and we also study some of their consequences in terms of applications. 
The main results of this paper are Theorems~\ref{thm1} and \ref{thm2}, while our main application is 
Theorem~\ref{App1}.

In Stein's celebrated work~\cite{Stein1961}, it is  shown that if a sequence of
translation-invariant operators $\{T_m\}$ on a compact group  
acts boundedly on $L^p(M)$, $1\le p\le2$, and if
$T_m f \to T f$ pointwise for every $f\in L^p(M)$, then the maximal operator
\[
T^*f=\sup_{m\ge1}|T_m f|
\]
satisfies a weak-type $(p,p)$ inequality.
The argument is based on probabilistic techniques with Rademacher functions and provides 
a powerful  connection between a.e. convergence and weak-type $(p, p)$ inequalities. 
In this work we develop an  analogue 
for  sequences of bilinear operators $T_m:\,\, L^p(M)\times L^q(M) \to L^r(M)$
with $1/p+1/q=1/r$ and $1\le p,q\le 2$. It should be noted that the opposite direction 
also holds: if a sequence of multilinear   operators satisfies a maximal weak-type estimate 
$L^p\times L^q \to L^{r,\infty}$,  
then it converges a.e. for all $L^p\times L^q$ functions, assuming it does so for a dense subclass; for a proof of this fact see for instance \cite[Proposition 2]{DosidisGrafakos2021}.

Interest in  bilinear operators originated in the pioneering work of 
Coifman and Meyer~\cite{CoifmanMeyer1975}~\cite{CoifmanMeyer1978} in the seventies 
and by the celebrated  work of
Lacey and Thiele~\cite{LaceyThiele1997,LaceyThiele1999}
on the bilinear Hilbert transform in the nineties. 
These works have spurred a resurgence of activity on adaptations  of many 
classical linear results to multilinear analogues, such as the Calder\'on-Zygmund theory
 \cite{GrafakosTorres2002} and many other topics. In relation  to 
 a.e. convergence of multilinear singular operators and boundedness of the 
 associated maximal operators (which naturally implies a.e. convergence) we refer the
 reader to 
 \cite{Lacey}
 \cite{MuscaluTaoThiele2002},  
 \cite{GT2002}
\cite{MuscaluTaoThiele2004}, 
 \cite{LiMuscalu2007},
  \cite{DemeterTaoThiele2008},
  \cite{DoThiele2016}, 
  \cite{MirekSteinTrojan2017}, 
  \cite{GrafakosHeHonzikPark2024}, 
  \cite{DouglasGrafakos2025},
  \cite{KMTT}. This list is by no means exhaustive, but is representative of the work in this area.

Another contribution of this work is the bilinear analogue of Sawyer's extension of Stein's theorem
to the case $r>2$ for positive operators under certain mild mixing conditions. 
The main  application of our results concerns improved weak-type bounds for a
 bilinear tail operator associated with an ergodic measure-preserving transformation on a finite measure space.

We begin by  introducing some preliminaries and then develop the probabilistic and measure-theoretic framework necessary for establishing Theorem~\ref{thm2}.

Let \( G \) be a topological group and let \( M \) be a topological space. We say that \( G \) 
acts continuously on $M$ if there is a continuous  mapping:
\[
G \times M  \to M, \qquad (g,x) \mapsto g(x).
\]
We call \( M \) a \emph{homogeneous space} of \( G \) if $G$ acts continuously on $M$ and 
transitively; the latter means for every pair of points \( x, y \in M \) there exists an element \( g \in G \) such that
\[
g(x) = y.
\]
We also define the translation operator \( \tau_g \) acting on functions $f$ on $M$ by 
\[
(\tau_g f)(x) = f(g^{-1}x), \qquad g \in G.
\]
Now suppose \( G \) is a compact group and \( M \) is a homogeneous space of \( G \).
Then the homogeneous space \( M \) inherits a unique normalized \( G \)-invariant measure \( d\mu \) from \( G \) that satisfies 
\[
\int_M f(g(x))\,d\mu(x) = \int_M f(x)\,d\mu(x),
\qquad \text{for all } f \in L^1(M) \text{ and } g \in G.
\]

If we normalize the measure \( d\mu \) on \( M \) such that 
\[
\int_M d\mu = 1,
\]
then \( G \) also has {\it a finite Haar measure} \( d\omega_G \), that satisfies
\[
\int_G d\omega_G = 1.
\]

It is useful to recall the relation between the measures \( d\mu \) on \( M \) and \( d\omega_G \) on \( G \).
For a fixed point  \( x_0 \in M \) and     a Borel subset \( E \) of \( M \) 
we define the {\it fiber of $E$ over the point $x_0$  under the action of $G$} as follows: 
\[
\widehat{E} = \{ g \in G : g(x_0) \in E \}.
\]
Then, by translation invariance, for any $x_0 \in M$ we have 
\begin{equation}\label{mu}
\mu(E) =\omega_G(\widehat{E}).
\end{equation}

Some classical examples that the reader may keep in mind are the following.  
First, when both the group and the space coincide with the \(k\)-torus, \(G = M = \mathbb{T}^k\),
the action is simply translation modulo 1:
\[
g \cdot x = g + x \pmod{1}, \qquad g, x \in \mathbb{T}^k,
\]
that is, addition is taken componentwise on the torus.

A second, closely related example arises when \(G = M\) is the {\it dyadic group},
in which case the action is given by componentwise dyadic addition:
\[
g \cdot x = g + x \pmod{2}.
\]

Finally, one may think of the rotational setting:
let \(G = \mathrm{SO}(n)\), the group of rotations in \(\mathbb{R}^n\),
and \(M = S^{n-1}\), the unit sphere.
Here the natural action is the usual rotation of vectors,
\[
g \cdot x = g(x), \qquad g \in \mathrm{SO}(n), \; x \in S^{n-1}.
\]
These three cases serve as   canonical examples 
of compact group actions and motivate the general framework discussed below.

We now discuss the setup in the bilinear setting.  Let $M$ and $G$ be as above.

\begin{definition}
For each $m=1,2,\dots$, let $T_m$ be a bilinear operator defined on $L^p(M, d\mu)\times 
L^q(M, d\mu)$, where $1\le p,q\le 2$.

\begin{itemize}
    \item We say that each $T_m$ is bounded if  for each $m=1,2,\dots$ there is a constant $C_m$ such that 
    \[
    \|T_m(f, h)\|_{L^r(M)} \leq C_m \|f\|_{L^p(M)} \|h\|_{L^q(M)},
    \]
    for all $f\in L^p(M)$ and $h \in L^q(M)$.
    We do not assume the constants $C_m$ are uniform in $m$.
    \item We say that each $T_m$ commutes with simultaneous translations if for every $g \in G$ we have 
    \[
    T_m(\tau_g f, \tau_g h)(x) = \tau_g T_m(f, h)(x), \quad \forall \, x \in M.
    \]

\item  
For $f\in L^p(M)$ and $h\in L^q(M)$ we define the associated maximal operator: 
\[
T^*(f, h) := \sup_{m \geq 1} |T_m(f, h) |.
\]
\end{itemize}
\end{definition}

The  first main result of this work is the following theorem:

\begin{theorem} \label{thm1}
Let $1 \le p,q \leq 2$ and $\frac{1}{2} \leq r \leq 1$, such that $\frac{1}{r} = \frac{1}{p} + \frac{1}{q}$. Let $T_m$ be a sequence of bounded operators  from  $ L^p(M) \times L^q(M)$  to $ L^r(M)$
that commute  with simultaneous translations. 
Suppose that for every $f \in L^p(M)$, $h \in L^q(M)$, the pointwise limit
    \[
    \lim_{m  \to \infty} T_m(f, h)(x)
    \]
    exists for almost every $x \in M$.

Then, there exists a constant $C > 0$ such that for all $f \in L^p(M)$, $h \in L^q(M)$, and all $\alpha > 0$,
\begin{equation}\label{con}
\mu\left(\left\{ x \in M : T^*(f, h)(x) > \alpha \right\} \right) \leq \frac{C}{\alpha^r} \|f\|_{L^p(M)}^r \|h\|_{L^q(M)}^r.
\end{equation}
\end{theorem}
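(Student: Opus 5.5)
We follow Stein's scheme from \cite{Stein1961}, run separately in each of the two variables, with a double Rademacher series at the centre.

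\emph{Step 1: a uniform distributional bound on a small set.} A.e. convergence implies $T^*(f,h)<\infty$ a.e. for every pair $(f,h)$. We first use a Baire category argument on the complete metric space $L^p\times L^q$, which is valid for $p,q\ge1$. We take the sets
\[
F_N=\{(f,h): \mu(\{T^*_n(f,h)>N\})\le \varepsilon\},
\]
where $T_n^*=\sup_{m\le n}|T_m|$. Each $T_m$ is bounded, hence continuous, so each $F_N$ is closed. Their union over $N$ is the whole space. We conclude that there exist $\varepsilon$-small exceptional sets with a uniform bound near some point $(f_0,h_0)$. Using bilinearity, $T(f,h)=T(f_0+f,h_0+h)-T(f_0,h_0+h)-T(f_0+f,h_0)+T(f_0,h_0)$, so the bound transfers to a neighbourhood of the origin. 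Homogeneity $T^*(\lambda f,\mu h)=\lambda\mu T^*(f,h)$ then gives the statement: for every $\varepsilon>0$ there is $C_\varepsilon$ with
\[
\mu\bigl(\{T^*(f,h)>C_\varepsilon\|f\|_p\|h\|_q\}\bigr)\le\varepsilon .
\]
This is weaker than \eqref{con}, but it holds uniformly.

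\emph{Step 2: upgrade by translation and randomization.} Let $f,h$ be given and let $E=\{T^*(f,h)>\alpha\}$. Suppose \eqref{con} fails, so that $\mu(E)$ is large relative to $\alpha^{-r}\|f\|_p^r\|h\|_q^r$. As in Stein, we choose translates $g_1,\dots,g_K$ and $g'_1,\dots,g'_L$ in $G$, with $K$ of size about $1/\mu(E)$ and similarly for $L$. A Borel--Cantelli/covering argument in $G$, using \eqref{mu} and the transitive compact action, lets us arrange that a.e. $x$ lies in many of the translated sets $g_ig'_jE$. We then form random sums
\[
F_\omega=\sum_i r_i(\omega)\tau_{g_i}f,\qquad H_{\omega'}=\sum_j r_j(\omega')\tau_{g'_j}h .
\]
By Khintchine and $p,q\le2$ we have $\mathbb E\|F_\omega\|_p^p\lesssim K\|f\|_p^p$. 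The key point is that $p\le 2$ gives $\|(\sum|f_i|^2)^{1/2}\|_p\le(\sum\|f_i\|_p^p)^{1/p}$, and the same holds for $H$. Since the operators commute with simultaneous translations, the output is
\[
T_m(F_\omega,H_{\omega'})=\sum_{i,j}r_i(\omega)r_j(\omega')\,T_m(\tau_{g_i}f,\tau_{g'_j}h).
\]
This is a double Rademacher series, and its diagonal structure requires choosing the pairs so that the relevant terms are $\tau_{g}T_m(f,h)$. Concretely, we will likely restrict to $g'_j=g_i$ on the diagonal and treat the off-diagonal terms as part of the series.

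\emph{Step 3, the main obstacle: the double-series Stein lemma.} We need a bilinear version of Stein's lemma. The linear lemma says that if $|\sum r_i(\omega)a_i|\le C$ on a set of $\omega$ of measure at least $\delta$, then the tail satisfies $\sum_{i>N}|a_i|^2\le C'(\delta)C^2$. Here we need the analogous estimate for the double series $\sum_{i,j}r_i(\omega)r_j(\omega')a_{ij}$, bounding $\sum|a_{ij}|^2$ on a tail in terms of the $L^\infty$ norm on a measurable subset of $\Omega\times\Omega$. We would prove it by Fubini. For many fixed $\omega'$, the slice is a single Rademacher series in $\omega$, so the linear lemma gives $\sum_i|\sum_j r_j(\omega')a_{ij}|^2\le C$. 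We then apply the linear lemma again, in vector-valued form with Hilbert-space coefficients, in the variable $\omega'$; alternatively we use hypercontractivity for Rademacher chaos of degree two to control the measure loss. With the lemma in hand, Step 1 applied to $(F_\omega,H_{\omega'})$ bounds the double series on a large set of $(\omega,\omega')$. The lemma then forces the square function $(\sum_{i,j}|T_m(\tau_{g_i}f,\tau_{g'_j}h)|^2)^{1/2}$ to be $\lesssim C_\varepsilon\|F\|_p\|H\|_q\lesssim K^{1/p}L^{1/q}\|f\|_p\|h\|_q$ on most of $M$. But on most of $M$ this square function also dominates $\alpha$, because a.e. point lies in many translates of $E$. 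Comparing the two bounds with the choices $K,L\sim\mu(E)^{-1}$ gives $\mu(E)^{-1/r}\alpha\lesssim \|f\|_p\|h\|_q$, which is exactly \eqref{con}.
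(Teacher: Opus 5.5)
Your argument is correct in outline, but it takes a genuinely different route from the paper.

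\textbf{The paper's route.} The paper never uses Baire category. It argues by contradiction:
\begin{enumerate}
\item From the failure of \eqref{con} it extracts an infinite sequence $(f_n,h_n,E_n)$ with $\sum\|f_n\|_p^r\|h_n\|_q^r<\infty$ and $\sum\mu(E_n)=\infty$ (Lemma~\ref{lm21}).
\item It inflates by factors $R_n\to\infty$ (Lemma~\ref{lm31}).
\item It chooses $g_n$ by the product-space Borel--Cantelli Lemma~\ref{lm24}, so that a.e.\ point lies in infinitely many $g_n[E_n]$.
\item It builds infinite random series $F(x,t)=\sum_n r_n(t)R_n\tau_{g_n}f_n$ and $H(x,s)=\sum_k r_k(s)R_k\tau_{g_k}h_k$, and shows $T^*(F^t,H^s)=\infty$ a.e., contradicting the hypothesis directly.
\end{enumerate}
The set of $(t,s)$ where $T^*(F^t,H^s)(x)$ is bounded may have tiny measure. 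For that reason the paper needs the tail form of the double Stein lemma (Lemma~\ref{lm33}), with $N(E)$ and $A(E)$ depending on the set. The contradiction is then read off from the diagonal terms $R_n^2\,T^*(f_n,h_n)(g_n^{-1}x)$.

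\textbf{Your route.} Your Step 1 (the Banach principle) front-loads all the qualitative information, so everything afterwards is finite and quantitative. With an absolute $\varepsilon$ and $K\approx\mu(E)^{-1}$ translates, the good set of signs has measure close to $1$. So you only need the no-tail statement: if $|\sum_{i,j}a_{ij}r_i(\omega)r_j(\omega')|\le t$ on a set of measure $>1-c_0$, then $(\sum|a_{ij}|^2)^{1/2}\le Ct$.
\begin{itemize}
\item This follows at once from Paley--Zygmund and $\|S\|_4\le 3\|S\|_2$ for decoupled degree-two chaos, applied to real and imaginary parts.
\item It is much easier than Lemma~\ref{lm33}, and it sidesteps that lemma's small-set analysis and its sign-splitting on $R_N$.
\item You use only a.e.\ finiteness of $T^*$.
\item You avoid Lemmas~\ref{lm21}, \ref{lm31}, \ref{lm24}, and the identification of $T_m(F^t,H^s)$ with an infinite double series.
\end{itemize}
The price is the Baire step. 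It is legitimate here, since $L^p\times L^q$ is complete and each $T_m$ is continuous into $L^0$.

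\textbf{Points to tighten.}
\begin{itemize}
\item In Step 1, take the sets $\{(f,h):\mu(\{T^*_n(f,h)>N\})\le\varepsilon\ \text{for all } n\}$. These are closed because the condition $\mu(\{|u|>N\})\le\varepsilon$ is preserved under convergence in measure.
\item Drop the second family $g'_j$ and the sets $g_ig'_jE$. Off-diagonal terms are never translates of $T_m(f,h)$, so use the same $g_1,\dots,g_K$ in both $F_\omega$ and $H_{\omega'}$.
\item You only need $\mu(\bigcup_i g_iE)\ge 1-e^{-c}$, not that points lie in many translates. Averaging over independent Haar-distributed $g_i$ and using \eqref{mu} gives $\mathbb E\,\mu(M\setminus\bigcup_i g_iE)=(1-\mu(E))^K$.
\item State the Rademacher lemma in the no-tail, large-measure form above. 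The tail form with a set-dependent $N$ is useless for a sum of $K$ terms.
\item Control $\|F_\omega\|_p$ and $\|H_{\omega'}\|_q$ by Chebyshev from $\mathbb E\|F_\omega\|_p^p\le K\|f\|_p^p$ before applying Fubini. Because Step 1 bounds $T^*$, the good set of $x$ is independent of $m$, so the diagonal bound passes to $T^*(f,h)(g_i^{-1}x)$.
\item The final inequality should read $\alpha\,\mu(E)^{1/r}\lesssim\|f\|_p\|h\|_q$, not $\mu(E)^{-1/r}\alpha\lesssim\|f\|_p\|h\|_q$.
\end{itemize}
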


In Section \ref{Sawyer} we also prove a version of this result for positive operators. In Section \ref{APP} we discuss applications. Naturally, our results extend to the case of multilinear operators, but for the 
sake of notational simplicity we only focus on the bilinear case.

\medskip
{\bf Notation:} We denote Lebesgue measure on Euclidean space or the torus by $| \cdot  |$. The indicator function of a set $S$ is denoted by $\mathbf 1_S$. We use the notation $A\lesssim B$ to indicate that the quantity $A$ is controlled by a constant multiple of $B$, for some 
inessential constant. 

\section{Some preliminary results}

We begin the proof with some lemmas. The following summability lemma allows us to balance size and frequency of occurrence of the terms of a series. 

\begin{lemma}\label{lm21}
Let $(a_n)_{n=1}^\infty$ be a sequence of numbers satisfying $0<a_n<\frac{A}{n}$ for some $A>0$. Then   we can find  a sequence of natural numbers
$n_1\le n_2\le n_3\le \cdots$ such that:
\begin{equation}\label{sum1}
    \sum_{k=1}^\infty a_{n_k}<\infty,
\end{equation}
\begin{equation}\label{sum2}
\sum_{k=1}^\infty n_ka_{n_k}=\infty.
\end{equation}
\end{lemma}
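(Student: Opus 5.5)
The idea is to repeat indices. We pick a rapidly increasing sequence of indices $N_1<N_2<\cdots$ and let the sequence $(n_k)$ consist of $N_1$ repeated $c_1$ times, then $N_2$ repeated $c_2$ times, and so on. This makes $(n_k)$ automatically nondecreasing. The two series then become
\[
\sum_k a_{n_k}=\sum_j c_j a_{N_j}, \qquad \sum_k n_k a_{n_k}=\sum_j c_j N_j a_{N_j}.
\]
So we want $c_j a_{N_j}$ to be summable in $j$, while the extra factor $N_j$ destroys summability. The natural choice is to make $c_j a_{N_j}\approx j^{-2}$, that is,
\[
c_j=\Big\lfloor \frac{1}{j^2 a_{N_j}}\Big\rfloor .
\]
This immediately gives $c_j a_{N_j}\le j^{-2}$, and hence \eqref{sum1}.

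The only point requiring care is that $c_j$ must be a positive integer, and that the floor must not lose too much. For this we use the hypothesis $a_n<A/n$, which gives
\[
\frac{1}{j^2a_{N_j}}>\frac{N_j}{Aj^2}.
\]
We therefore choose $N_j$ strictly increasing with $N_j\ge 2Aj^2$ (for instance $N_j=\lceil 2A\rceil j^2+j$). Then $1/(j^2a_{N_j})>2$, so $c_j\ge 1$. Moreover, since $\lfloor x\rfloor\ge x/2$ for $x\ge 1$, we get $c_j\ge \tfrac12\cdot\frac{1}{j^2a_{N_j}}$.

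It follows that
\[
c_jN_ja_{N_j}\ \ge\ \frac{N_j}{2j^2}\ \ge\ A>0
\]
for every $j$. Summing over $j$ gives \eqref{sum2}.

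No step is a real obstacle. The one subtlety is the integrality and positivity of the multiplicities $c_j$, and this is exactly where the bound $a_n<A/n$ enters, through the choice $N_j\gtrsim Aj^2$. The positivity $a_n>0$ is needed only so that $1/a_{N_j}$ makes sense.
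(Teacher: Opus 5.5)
Your proof is correct, and it takes a more streamlined route than the paper's.

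The paper splits into two cases.
\begin{itemize}
\item If $\limsup_n na_n=a>0$, it takes a strictly increasing sequence with $n_k>k^2$ and $n_ka_{n_k}>a/(k+1)$. Then $\sum a_{n_k}\le\sum A/k^2$, while $\sum n_ka_{n_k}$ dominates a harmonic series.
\item If $\limsup_n na_n=0$, it repeats $a_{2^t}$ exactly $\lfloor 1/(2^ta_{2^t})\rfloor$ times. Divergence of the second series then relies on $2^ta_{2^t}\to0$: this makes the floor asymptotically lossless, so each block contributes about $1$.
\end{itemize}

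Your argument is the repetition idea of the second case, but you decouple the index $N_j\ge 2Aj^2$ from the block weight $j^{-2}$. The hypothesis $a_n<A/n$ then forces $1/(j^2a_{N_j})>2$ for every $j$. So the floor is harmless in every regime, each block contributes at least $A$, and the paper's first case becomes unnecessary.

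What the paper's first case buys is a genuine, strictly increasing subsequence when $\limsup na_n>0$. Your sequence always has repetitions. The statement explicitly allows this, and it is harmless in the proof of Theorem~\ref{thm1}, where a repeated index simply reuses the same triple $(f_n,h_n,E_n)$ with a fresh Rademacher function and a fresh translation.

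Your version also shows how little is really needed. Choosing $N_j\ge 2j^2$ strictly increasing with $a_{N_j}\le 1/(2j^2)$ makes the same computation work under the sole assumption $\liminf_n a_n=0$. That assumption is also necessary: since every $a_n>0$, $\liminf_n a_n>0$ would give $\inf_n a_n>0$, and then $\sum_k a_{n_k}=\infty$ for every choice of $(n_k)$.
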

\begin{proof}
If $\limsup_{n \to \infty} na_n=a>0$, we define 
$n_1=1$ and for $k\ge 2$ we define inductively
$$
n_k= \min\Big\{n>\max(k^2, n_{k-1}):\,\, na_n>\frac{a}{k+1}\Big\}.
$$ 
Then we have   
$$\sum_{k=1}^\infty a_{n_k}\le\sum_{k=1}^\infty \frac{A}{k^2}<\infty,$$
$$\sum_{k=1}^K n_ka_{n_k}>a\sum_{k=1}^K\frac{1}{k+1}.$$

If now $\limsup_{n \to \infty} na_n=0$, then we define $(a_{n_k})$ to be the following sequence:
$$\underbrace{a_1,\dots,a_1}_{\lfloor\frac{1}{2a_{2}}\rfloor \,\,\textup{times}},\underbrace{a_2,\dots,a_2}_{\lfloor\frac{1}{2^2a_{2^2}}\rfloor \,\,\textup{times}},\dots,\underbrace{a_k,\dots,a_k}_{\lfloor\frac{1}{2^ka_{2^k}}\rfloor \,\,\textup{times}},\dots .
$$
Here  $\lfloor a \rfloor = \max\{ n \in \mathbb{Z} : n \le a \}$ denotes the floor of $a$, i.e., 
the greatest integer less than or equal to \(a\).
Notice that $\lim_{t \to \infty}\lfloor\frac{1}{2^ta_{2^t}}\rfloor 2^ta_{2^t}=1$, and $\lfloor\frac{1}{ 2^ta_{2^t}}\rfloor a_{2^t}<2^{-t}$, for every $t\in\mathbb N$, therefore, \eqref{sum1} and \eqref{sum2} hold. 
\end{proof}

\begin{lemma}\label{lm22}
For a positive numerical series $\sum_{n=1}^\infty a_n<\infty$, we can find a sequence  of positive numbers $R_n$ such that $R_n \to \infty$ as $n \to \infty$, but also such that $\sum_{n=1}^\infty R_na_n<\infty$.
\end{lemma}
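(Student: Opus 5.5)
The plan is to use the tails of the series. Set $r_n = \sum_{k\ge n} a_k$. Since $\sum a_n<\infty$ and every $a_n>0$, the sequence $r_n$ is strictly positive, nonincreasing, and tends to $0$. The natural choice is
\[
R_n = \frac{1}{\sqrt{r_n}},
\]
which clearly tends to $\infty$. It then remains to check that $\sum_n a_n/\sqrt{r_n}<\infty$.

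The key step writes each term as a difference of consecutive tails, $a_n = r_n - r_{n+1}$, and then uses the elementary inequality
\[
\frac{r_n - r_{n+1}}{\sqrt{r_n}} = \frac{(\sqrt{r_n}-\sqrt{r_{n+1}})(\sqrt{r_n}+\sqrt{r_{n+1}})}{\sqrt{r_n}} \le 2\bigl(\sqrt{r_n}-\sqrt{r_{n+1}}\bigr).
\]
This holds because $r_{n+1}\le r_n$. Summing over $n$ telescopes, so that
\[
\sum_{n=1}^\infty R_n a_n \le 2\sqrt{r_1} = 2\Bigl(\sum_{n\ge1} a_n\Bigr)^{1/2}<\infty.
\]
This completes the argument.

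Nothing here is a real obstacle. The only point needing care is that $r_n>0$ for every $n$, so that $R_n$ is well defined. This follows from the hypothesis that the $a_n$ are positive. If one only had $a_n\ge 0$, one could instead use a blocking argument: choose indices $N_1<N_2<\cdots$ with $\sum_{n\ge N_j} a_n<2^{-j}$, set $R_n=j$ for $N_j\le n<N_{j+1}$, and bound the sum by $\sum_j j\,2^{-j}$ plus a finite initial part.
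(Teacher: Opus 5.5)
Your proof is correct and is essentially the paper's argument: take $R_n=1/\sqrt{\text{tail}}$, write $a_n$ as a difference of consecutive tails, and telescope $\sqrt{\text{tail}}$. Your indexing is in fact the right one. The paper uses the strict tail $T_n=\sum_{k>n}a_k$, for which its step $\frac{T_{n-1}-T_n}{\sqrt{T_n}}=\sqrt{T_n}\bigl(1-\frac{T_n}{T_{n-1}}\bigr)$ is false and the conclusion can fail: for $a_n=2^{-2^n}$ one gets $a_n/\sqrt{T_n}\ge\sqrt3/2$. Your $r_n=\sum_{k\ge n}a_k$, with the bound $2(\sqrt{r_n}-\sqrt{r_{n+1}})$ (constant $2$, not the paper's $\tfrac12$), gives a valid proof.
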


\begin{proof}
Denote by $T_n=\sum_{k=n+1}^\infty a_k$ the $n$th tail of $\sum_{n=1}^\infty a_n$.  Then take $R_n=\frac{1}{\sqrt{T_n}}$. It is straightforward to verify that $R_n \to \infty$, and
{\allowdisplaybreaks\begin{align*}
\sum_{n=1}^N R_na_n=&\sum_{n=1}^N\frac{a_n}{\sqrt{T_n}}\\
=&\sum_{n=1}^N\frac{T_{n-1}-T_n}{\sqrt{T_n}}\\
=&\sum_{n=1}^N\sqrt{T_n}\bigg(1-\frac{T_n}{T_{n-1}}\bigg)\\
\le&\sum_{n=1}^N\frac{\sqrt{T_{n-1}}}{2}\bigg(1-\frac{\sqrt{T_n}}{\sqrt{T_{n-1}}}\bigg)\\
=&\frac{1}{2}\sum_{n=1}^N \sqrt{T_{n-1}}-\sqrt{T_n}\\
=&\frac{1}{2} \big(\sqrt{T_0}-\sqrt{T_N}\big)
\end{align*}}
As $T_0=\sum_{n=1}^\infty a_n<\infty$ and $\lim_{N \to \infty} T_N=0$, we complete the proof.
\end{proof}
Then by the above lemma, we have the following
\begin{lemma}\label{lm31} Let  $0<p,q <\infty$.
Let \( \{a_n\}_{n=1}^\infty \), \( \{b_n\}_{n=1}^\infty \) be sequences of positive numbers such that
\[
\sum_{n=1}^\infty a_n^p<\infty,\quad\sum_{n=1}^\infty b_n^q < \infty.
\]
Then, there exists a sequence \( \{R_n\}_{n=1}^\infty \) with \( R_n \to \infty \) such that
\[
\sum_{n=1}^\infty R_n^p a_n^p < \infty, \quad \sum_{n=1}^\infty R_n^q b_n^q < \infty.
\]
\end{lemma}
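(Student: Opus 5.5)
Apply Lemma~\ref{lm22} to the single summable positive series
\[
c_n := a_n^p + b_n^q, \qquad \sum_{n=1}^\infty c_n <\infty .
\]
Lemma~\ref{lm22} gives positive numbers $S_n\to\infty$ with $\sum_{n} S_n c_n<\infty$. Since $a_n^p\le c_n$ and $b_n^q\le c_n$, we get
\[
\sum_n S_n a_n^p<\infty \quad\text{and}\quad \sum_n S_n b_n^q<\infty .
\]

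Next convert the single weight $S_n$ into a common $R_n$ that works for both exponents. Set
\[
R_n := \min\bigl(S_n^{1/p},\, S_n^{1/q}\bigr).
\]
Since $p,q>0$ are fixed and $S_n\to\infty$, both $S_n^{1/p}$ and $S_n^{1/q}$ tend to infinity, so $R_n\to\infty$. Moreover $R_n^p\le (S_n^{1/p})^p=S_n$ and $R_n^q\le (S_n^{1/q})^q=S_n$. Hence
\[
\sum_n R_n^p a_n^p\le \sum_n S_n a_n^p<\infty
\quad\text{and}\quad
\sum_n R_n^q b_n^q\le\sum_n S_n b_n^q<\infty .
\]
This completes the argument.

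The only point needing care is the choice of a common $R_n$ when $p\neq q$. Taking the minimum of the two roots handles this, because each root separately diverges and is dominated appropriately after raising to the relevant power. Everything else is a direct application of Lemma~\ref{lm22} to the combined series.
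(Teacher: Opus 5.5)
Your proof is correct and is essentially the paper's argument: apply Lemma~\ref{lm22} to $c_n=a_n^p+b_n^q$ and take a suitable root of the resulting weights. Your choice $R_n=\min(S_n^{1/p},S_n^{1/q})$ coincides with the paper's $R_n=S_n^{1/\max\{p,q\}}$ whenever $S_n\ge 1$. It is slightly cleaner, since it gives $R_n^p\le S_n$ and $R_n^q\le S_n$ for every $n$, whereas the paper's comparison $R_n^p\le R_n^{\max\{p,q\}}$ tacitly needs $R_n\ge 1$ (true for all but finitely many $n$, so harmless).
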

\begin{proof}
Take $c_n=a_n^p+b_n^q$, then by Lemma~\ref{lm22}, there exists a sequence \( \{R_n'\}_{n=1}^\infty \) with \( R_n' \to \infty \) such that
\[
\sum_{n=1}^\infty R_n'c_n < \infty.
\]
Setting $R_n'=R_n^{\max\{p,q\}}$, we rewrite the preceding as  
\[
\sum_{n=1}^\infty R_n^{\max\{p,q\}}c_n < \infty. 
\]
Then we have that 
\[ \sum_{n=1}^\infty R_n^p a_n^p < \sum_{n=1}^\infty R_n^{\max\{p,q\}}c_n<\infty,\]
\[ \sum_{n=1}^\infty R_n^q b_n^q <\sum_{n=1}^\infty R_n^{\max\{p,q\}}c_n< \infty.\]
This proves our claim.
\end{proof}

\begin{lemma}\label{ln}
Let $(t_n)_{n\ge1}$ be a sequence of real numbers such that
\[
0 < t_n < 1 \quad \text{for all } n=1,2,\dots
\]
and
\[
\sum_{n=1}^\infty t_n = \infty.
\]
Then, for every fixed natural number \(N\), we have
\[
\prod_{n=N}^M (1 - t_n) \longrightarrow 0
\quad \text{as } M \to \infty.
\]
\end{lemma}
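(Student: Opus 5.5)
We prove Lemma~\ref{ln} with the elementary inequality $1-t\le e^{-t}$, valid for all real $t$. This follows from convexity of $t\mapsto e^{-t}$: the graph lies above its tangent line at $t=0$, which is $1-t$.

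Fix $N$. Since $0<t_n<1$, every factor $1-t_n$ lies in $(0,1)$. So the partial products
\[
P_M=\prod_{n=N}^M(1-t_n), \qquad M\ge N,
\]
are positive and nonincreasing in $M$. Applying the inequality factor by factor gives
\[
0<P_M\le \prod_{n=N}^M e^{-t_n}=\exp\Big(-\sum_{n=N}^M t_n\Big).
\]

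Next we use the divergence hypothesis. Removing the finitely many terms $t_1,\dots,t_{N-1}$ does not affect divergence, so $\sum_{n=N}^M t_n\to\infty$ as $M\to\infty$. The right-hand side therefore tends to $0$, and by squeezing, $P_M\to 0$.

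No step is a real obstacle. The only points to check are that the factors are positive, so the inequality may be multiplied termwise without sign issues, and that the divergence survives dropping an initial segment. Both are immediate.
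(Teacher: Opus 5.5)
Your proof is correct and is essentially the paper's argument. The paper takes logarithms and uses $\ln(1-x)\le -x$, which is your inequality $1-t\le e^{-t}$ in logarithmic form; it then exponentiates to reach the same bound $P_M\le \exp\bigl(-\sum_{n=N}^M t_n\bigr)$ and concludes from the divergence of the tail.
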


\begin{proof}
Fix \(N\in\mathbb N\) and set
\[
P_M := \prod_{n=N}^M (1 - t_n), \qquad M \ge N.
\]
Since \(0 < t_n < 1\), each factor \(1 - t_n\) lies in \((0,1)\), so \(P_M>0\) and
\[
\ln P_M = \sum_{n=N}^M \ln(1 - t_n).
\]

We now estimate the logarithms. For \(x\in(0,1)\), we have the standard inequality
\[
\ln(1-x) \le -x.
\]

Applying this with \(x = t_n\), we obtain
\[
\ln(1 - t_n) \le -t_n \quad\text{for all } n.
\]
Therefore,
\[
\ln P_M
=
\sum_{n=N}^M \ln(1 - t_n)
\;\le\;
- \sum_{n=N}^M t_n,
\]
thus 
\[
P_M
=e^{\sum_{n=N}^M \ln(1 - t_n)}
\;\le\;
 e^{-\sum_{n=N}^M t_n},
\]

By hypothesis, \(\sum_{n=1}^\infty t_n = \infty\), hence also
\(\sum_{n=N}^\infty t_n = \infty\). Thus
\[
\sum_{n=N}^M t_n \xrightarrow[M \to\infty]{} \infty,
\]
which implies
$P_M \rightarrow 0
$ as $M \to\infty$. 
This is exactly the desired conclusion.
\end{proof}

\begin{lemma}\label{lm24}
Let $E_1,E_2, \dots,E_n,\dots$ be a collection of sets in $M$, with the property that 
$\sum_{n=1}^\infty \mu(E_n) = \infty$. Then there exists a sequence of elements
$g_1,g_2,\dots, g_n,\dots$ belonging to $G$, so that the translated sets $F_1,F_2,\dots$  defined by 
$F_n = g_n[E_n]$, have the   property that almost every point of $M$ belongs to infinitely many sets $F_n$. Precisely, let
$$F_0 = \limsup F_n= \bigcap_{k=1}^\infty\bigcup_{n=k}^\infty F_n.$$
Then $\mu(F_0) = 1$.  
\end{lemma}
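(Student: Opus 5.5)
We will use the classical random-translation argument (Stein's lemma), with the $g_n$ chosen independently according to Haar measure on $G$, combined with Lemma~\ref{ln}. We may assume $\mu(E_n)<1$ for every $n$. Indeed, if $\mu(E_n)=1$ for infinitely many $n$, then almost every point lies in infinitely many of the (full-measure) translates for any choice of $g_n$. If this happens only finitely often, we simply discard those indices, which changes neither the divergence of $\sum \mu(E_n)$ nor $\limsup F_n$ in a relevant way. We also assume $\mu(E_n)>0$ after discarding null sets; this does not affect divergence either.

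Consider the product probability space $\Omega=G^{\mathbb N}$ with the product Haar measure $P=\bigotimes \omega_G$, and write $\boldsymbol g=(g_1,g_2,\dots)\in\Omega$. For a fixed $x\in M$ and fixed $n$, the event $\{x\in g_n[E_n]\}$ is the same as $\{g_n^{-1}x\in E_n\}$. By invariance of Haar measure under inversion (here $G$ is compact, hence unimodular) and by \eqref{mu} applied with base point $x_0=x$, this event has probability $\omega_G(\{g: g(x)\in E_n\})=\mu(E_n)$. These events are independent in $n$, since they depend on distinct coordinates.

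Now fix $x$ and $k$. The probability that $x\notin\bigcup_{n=k}^{K} g_n[E_n]$ equals $\prod_{n=k}^K(1-\mu(E_n))$. By Lemma~\ref{ln} with $t_n=\mu(E_n)\in(0,1)$ and $\sum t_n=\infty$, this product tends to $0$ as $K\to\infty$. Hence $P(x\notin\bigcup_{n\ge k}F_n)=0$ for every $k$, and taking a countable union over $k$ gives $P(x\notin F_0)=0$ for each $x\in M$.

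Next we apply Fubini on $M\times\Omega$. The set $\{(x,\boldsymbol g): x\notin F_0(\boldsymbol g)\}$ is jointly measurable: the action map is continuous, so $\{(x,g): g^{-1}x\in E_n\}$ is Borel in $M\times G$, and $F_0$ is obtained from these sets by countable operations. Integrating first in $\boldsymbol g$ gives $0$, so
\[
\int_\Omega \mu(M\setminus F_0(\boldsymbol g))\,dP(\boldsymbol g)=0 .
\]
Therefore $\mu(F_0(\boldsymbol g))=1$ for $P$-a.e. $\boldsymbol g$, and in particular some sequence $(g_n)$ works. The main point to check carefully is the identification of the probability with $\mu(E_n)$ via \eqref{mu} together with inversion invariance, and the joint measurability needed for Fubini. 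Both follow from the compactness of $G$ and the continuity of the action, taking the $E_n$ to be Borel, or replacing them by Borel sets of equal measure.
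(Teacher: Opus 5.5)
Your proof is correct and takes essentially the same route as the paper: Haar-random translations on $G^{\mathbb N}$, Lemma~\ref{ln} to show that each fixed $x$ lies almost surely in $\limsup_n g_n[E_n]$, and Fubini to extract one good sequence. The only difference is bookkeeping. The paper first proves $\mu^*(\mathcal E_0)=1$ on the auxiliary product space $\prod_k M_k$ and transfers this to $\Gamma=\prod_k G_k$ via \eqref{mu} at the diagonal point $(x,x,\dots)$, whereas you compute the probability directly from independence of coordinates; you also make explicit the reductions to $0<\mu(E_n)<1$ and the inversion invariance of Haar measure, which the paper uses tacitly.
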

\begin{proof}
We consider two infinite product spaces. First, let 
$$
\mathcal M=\prod_{k=1}^\infty M_k,
$$ 
where $\mathcal M$ is the infinite product of $M_k$'s, each $M_k$ being a copy of $M$. Thus the points of $\mathcal M$ are sequences $\{x_n\}$, where $x_n \in M$. We impose the usual product measure of the measures $d\mu$ on each $M_k$ on $\mathcal M$. We call this product measure $d\mu^*$. 

Next we consider the infinite product group 
$$
\Gamma = \prod_{k=1}^\infty  G_k,
$$
where $\Gamma$ is the infinite product of $G_k$, each
$G_k$ is a copy of $G$. The elements of $\Gamma$ are sequences $\{g_n\}$, where $g_n \in G$. On
$\Gamma$ we also consider the usual product measure. 

We notice that $\Gamma$ is a compact group which acts on $\mathcal M$ in a the following way
\[
\gamma \cdot (x_1,x_2,\dots) = (g_1(x_1), g_2(x_2), \dots), 
\qquad \gamma=(g_1,g_2,\dots)\in\Gamma.
\]
Hence $(\mathcal{M},\mu^*)$ is a homogeneous space of~$\Gamma$.

Consider now the collection of sets $\mathcal E_1$, $\mathcal E_2$,$\dots$, $\mathcal E_n$, $\dots$ on $\mathcal M$ defined as follows
\[
\mathcal{E}_n = \{\, x=(x_1,x_2,\dots) \in \mathcal{M} : \,\, x_n \in E_n \,\}, \qquad n=1,2,\dots. 
\]
 Then $\mu^*(\mathcal E_n) = \mu(E_n)$. 
 Let 
 $$
 \mathcal E_0 = \bigcap_{k=1}^\infty\bigcup_{n=k}^\infty \mathcal E_n.
 $$
We claim that \( \mu^*(\mathcal E_0) = 1 \).

Recall that
\[
\mathcal E_0^\complement
= \bigcup_{k=1}^\infty \bigcap_{n=k}^\infty \mathcal E_n^\complement.
\]
Hence, by the subadditivity of the outer measure \( \mu^* \)  we have 
\begin{equation}\label{eq:union-bound}
\mu^*(\mathcal E_0^\complement)
\le
\sum_{k=1}^\infty \mu^*\!\left(\bigcap_{n=k}^\infty \mathcal E_n^\complement\right).
\end{equation}

Fix \( k \ge 1 \).  
For any \( m > k \),
\[
\bigcap_{n=k}^m \mathcal E_n^\complement
\subseteq
\bigcup_{n=k}^m \mathcal E_n^\complement,
\]
so by the monotonicity of \( \mu^* \) one obtains 
\begin{equation*}
\mu^*\!\left(\bigcap_{n=k}^m \mathcal E_n^\complement\right)
=
\prod_{n=k}^m \mu(  E_n^\complement)
= \prod_{n=k}^m (1-\mu(  E_n)).
\end{equation*}
Since, by assumption, \( \mu^*(\mathcal E_n) = \mu(E_n)  > 0 \),
we have \( \mu^*(\mathcal E_n^\complement) = 1 - \mu(E_n) \) 
by the definition of the product measure. Thus,
\[
\mu^*\!\left(\bigcap_{n=k}^m \mathcal E_n^\complement\right)
\le  \prod_{n=k}^m (1-\mu(  E_n)).
\]
Letting \( m \to \infty \), we obtain
\begin{equation}\label{eq:limit-zero}
\mu^*\!\left(\bigcap_{n=k}^\infty \mathcal E_n^\complement\right)
\le \lim_{m\to\infty}  \prod_{n=k}^m (1-\mu(  E_n))
= 0,
\end{equation}
by Lemma~\ref{ln}.

Now summing over \( k \) and substituting \eqref{eq:limit-zero} into \eqref{eq:union-bound}, we find that 
\[
\mu^*(\mathcal E_0^\complement)
\le
\sum_{k=1}^\infty
\mu^*\!\left(\bigcap_{n=k}^\infty \mathcal E_n^\complement\right)
= 0 
\]
 Hence,
\[
\mu^*(\mathcal E_0)
= 1 - \mu^*(\mathcal E_0^\complement)
= 1.
\]
Therefore we have $\mu(\mathcal E_0^\complement) = 0$  and thus $\mu(\mathcal E_0) = 1$.

Let now $\psi(x_1, x_2,\dots,x_n,\dots)$   be the characteristic function of the set $\mathcal E_0$.
Consider the function $f(\gamma, p_x)$ defined on $\Gamma
\times M$ as follows
$$
f(\gamma, x) = \psi(g_1^{-1}(x), g_2^{-1}(x),\dots, g_n^{-1}(x) ,\dots),
$$
where $\gamma = \{g_n\} \in \Gamma$, and $x\in M$. 

Define the set
\[
A_x := \{\gamma\in\Gamma : f(\gamma,x)=1\}
      = \{\gamma\in\Gamma : \gamma^{-1}\cdot p_x \in \mathcal E_0\}.
\]
We apply the \eqref{mu} to the case
where $\Gamma$ is the group, $\mathcal M$ the homogeneous space 
$$
p_x = (x, x,\dots x,\dots) ,
$$
 and $E = \mathcal E_0$, we obtain
\[
\mu^*(\mathcal E_0)
  = \omega_\Gamma(A_x),
\]
where $ \omega_\Gamma$ is the product Haar measure induced by $\omega_G$. 
Since $\mu^*(\mathcal E_0)=1$, it follows that for every fixed $x\in M$,
\[
\omega_\Gamma(A_x)=1,
\quad\text{i.e.}\quad
f(\gamma,p_x)=1\ \text{for almost every }\gamma\in\Gamma.
\]

 We then have that for each $x$, $f(\gamma, p_x) = 1$ for almost every $\gamma$. Hence, by Fubini's theorem
 $$
 \int_{M}\int_{\Gamma}f(\gamma, p_x)\, d\omega_\Gamma \,d\mu^*= 1=
 \int_{\Gamma}\int_{M}f(\gamma, p_x)
\, d\mu^* \,d\omega_\Gamma .
 $$ 
 Thus for almost every  $\gamma = \{g_n\}\in \Gamma$, we have $f(\gamma, p_x)=1$ for almost every $x\in M$. 

Therefore for almost every $x \in M$, $\gamma^{-1}(p_x) \in \mathcal E_0$. That
is, for almost every $x \in M$, $g_n^{-1}(x) \in E_n$ for infinitely many $n$. Hence, for
almost every $x \in M$, we have $x \in g_n[E_n]$ for infinitely many $n$. This proves the
lemma.
\end{proof}

Next we consider the \textbf{Rademacher} functions $r_n(t)$ defined by 
\[
 r_n(t) = r_1(2^n t),\quad n=1,2,\dots,
\]
for $t\in I:=[0,1]$, where $r_0(t)=1$ and
\[
r_1(t) = 
\begin{cases}
1, & \text{if } 0 \le t \le 1/2, \\
-1, & \text{if } 1/2 < t < 1.
\end{cases}
\]

These functions are orthonormal over $[0, 1]$ and their importance lies in  the fact that they can be thought of as mutually independent random variables. We shall not make explicit use of this fact, but instead we shall use a property that, in effect, follows from this.

Our next lemma   is  an analogue of Lemma 2 in \cite{Stein1961} (see also \cite{SagherZhou}) for double Rademacher series.  
The approach in the proof below    is based on the work of 
\cite{AGH}. Let $r_n(s)$, $r_k(t)$ be the Rademacher functions indexed by $k,n\ge 0$, and for  numbers $a_{n,k}$ with $\sum_{n,k=0}^\infty |a_{n,k}|^2<\infty$, we consider the double 
Rademacher series
$$
F(s,t) = \sum_{n,k=0}^\infty a_{n,k} \, r_n(s)\, r_k(t). 
$$

The following lemma concerns $F$. 

\begin{lemma}\label{lm33}
Let $E\subset[0,1]^2$ be a measurable set with $|E|>0$, and let
\[
F(s,t)=\sum_{n,k=0}^\infty a_{n,k}\, r_n(s)\,r_k(t),
\qquad \sum_{n,k\ge0}|a_{n,k}|^2<\infty,
\]
where $\{a_{n,k}\}_{n,k=0}^\infty$ is a sequence of complex numbers.
Then there exists an integer $N=N(E)$ and a constant $A=A(E)>0$, both depending only on $E$ and not on the coefficients $a_{n,k}$,
 such that 
\[
\Big(\sum_{\substack{n>N(E)\\ k>N(E)}} |a_{n,k}|^2\Big)^{1/2}
\ \le\
A(E)\,\operatorname*{ess\,sup}_{(s,t)\in E}|F(s,t)|.
\]
\end{lemma}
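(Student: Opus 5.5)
Following Stein's proof of his Lemma 2, we will use a density-point argument to localize $E$ to a dyadic square, and then exploit orthogonality of Rademacher products on that square. By the Lebesgue density theorem applied to dyadic squares, choose a dyadic square $Q=I\times J$ of side $2^{-N}$ with $|E\cap Q|\ge (1-\delta)|Q|$, where $\delta>0$ is a small absolute constant to be fixed later. Then $N=N(E)$ and $\delta$ depend only on $E$. On $Q$ the functions $r_n(s)$ with $n\le N$ are constant in $s$, and likewise $r_k(t)$ with $k\le N$ are constant in $t$. Hence on $Q$ we can write
\[
F(s,t)=c+\sum_{n>N}\alpha_n r_n(s)+\sum_{k>N}\beta_k r_k(t)+\sum_{n,k>N}a_{n,k}r_n(s)r_k(t)=:c+U(s)+V(t)+W(s,t),
\]
where $c,\alpha_n,\beta_k$ are constants determined by the $a_{n,k}$ and the signs of the low-order Rademachers on $Q$. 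After the affine change of variables mapping $Q$ onto $[0,1]^2$, the functions $\{1,r_n(s),r_k(t),r_n(s)r_k(t)\}_{n,k>N}$ become a rescaled double Rademacher system, which is orthonormal on $Q$ with respect to normalized measure.

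Writing $S=\operatorname*{ess\,sup}_E|F|$ and $\Phi=F\mathbf 1_Q$, orthonormality gives
\[
\frac{1}{|Q|}\int_Q|F|^2=|c|^2+\sum|\alpha_n|^2+\sum|\beta_k|^2+\sum_{n,k>N}|a_{n,k}|^2,
\]
so it suffices to bound the left side by $A(E)^2S^2$. We split $Q=(E\cap Q)\cup(Q\setminus E)$. The part over $E\cap Q$ is at most $S^2$. For the part over $Q\setminus E$, Hölder's inequality gives at most $|Q\setminus E|^{1/2}\|F\|_{L^4(Q)}^2$. The key input is a Khintchine/hypercontractivity inequality for the double Rademacher system: a function in the span of $\{1,r_n,r_k,r_nr_k\}$ (a Rademacher chaos of degree at most $2$, since $n$ and $k$ live in independent variables) satisfies $\|F\|_{L^4(Q,dx/|Q|)}\le C_0\|F\|_{L^2(Q,dx/|Q|)}$ with an absolute constant $C_0$. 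One can take $C_0=3$ by Bonami's inequality, or obtain it by applying the one-variable Khintchine $L^4$ bound iteratively in $s$ and then in $t$ with Minkowski's inequality. Combining these,
\[
\frac{1}{|Q|}\int_Q|F|^2\le S^2+\delta^{1/2}C_0^2\,\frac{1}{|Q|}\int_Q|F|^2 .
\]
Choosing $\delta$ with $\delta^{1/2}C_0^2\le 1/2$, and noting that the left side is finite because $\sum|a_{n,k}|^2<\infty$, we can absorb the second term and obtain $\sum_{n,k>N}|a_{n,k}|^2\le 2S^2$. This gives the lemma with $A(E)=\sqrt2$ and $N(E)$ as chosen above. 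Convergence issues are handled by working first with finite partial sums, where all the identities are exact and the density step does not depend on the coefficients, and then passing to the limit. The partial sums converge in $L^2$, and hence in $L^2(Q)$. The ess sup on $E$ does not increase under this procedure only up to limits, so more carefully one proves the $L^2(Q)$ inequality for $F$ directly: all steps use only the $L^2$/$L^4$ norms of the full series $F$, and the $L^4$ bound extends to $F$ by Fatou's lemma.

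The main obstacle is the $L^4$–$L^2$ comparison for the two-parameter chaos, including the mixed lower-order terms $U$ and $V$. I would try the iterated Khintchine argument first: for fixed $t$, $F(\cdot,t)$ is a one-variable Rademacher series in $s$, so its $L^4_s$ norm is controlled by its $L^2_s$ norm. Then Minkowski's inequality moves the $L^2_s$ norm inside the $L^4_t$ norm, and one applies Khintchine in $t$ to the square-function coefficients. That reduction, rather than the density step, is where care is needed.
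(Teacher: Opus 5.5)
Your argument is correct, and it takes a genuinely different route from the paper's.

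\textbf{The paper's route.} It follows Stein's one-variable template. On the dyadic square $R_N$ it writes $F=H_{00}+F_N$ with $H_{00}$ constant. Orthogonality of the three blocks $F^{(1)},F^{(2)},F^{(3)}$ gives $|R_N|\sum_{n,k>N}|a_{n,k}|^2\le\int_{R_N}|F_N|^2$. The part over $R_N\setminus E$ is controlled by the quadratic-form bound \eqref{sqrt}. Finally the constant $H_{00}$ is removed by splitting $R_N$ into halves on which $F_N\ge0$, resp.\ $F_N\le0$, so that $|F_N|\le|F|$ on one of them, depending on the sign of $H_{00}$.

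\textbf{Your route.} You keep the constant $c$ and the one-variable pieces $U,V$ inside the orthonormal expansion on $Q$. Then $\frac{1}{|Q|}\int_Q|F|^2$ already dominates the tail, and no sign argument is needed. The bad set $Q\setminus E$ is handled by H\"older and an $L^4$--$L^2$ comparison, then absorbed.

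\textbf{What this buys.} It works verbatim for complex coefficients; the paper's sets $\{F_N>0\}$, $\{F_N<0\}$ and its case split on the sign of $H_{00}$ presuppose real values. It gives an absolute constant $A=\sqrt2$. It also avoids the paper's claim that $\{F_N>0\}$ and $\{F_N<0\}$ have equal measure in $R_N$. In one variable that follows from the symmetry $F_N\mapsto-F_N$, but it fails in two variables because of the mixed blocks. For example, $F_N=r_{N+1}(s)+r_{N+1}(t)+r_{N+1}(s)r_{N+1}(t)$ equals $3$ on a quarter of $R_N$ and $-1$ on the rest.

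Your hypercontractive step also replaces \eqref{sqrt}. The paper derives that bound from a Bessel inequality for the products $r_nr_kr_{n'}r_{k'}$, which is problematic: for $n=n'$ the product reduces to $r_k(t)r_{k'}(t)$ for every $n$, so the sum in that Bessel inequality can diverge.

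\textbf{The price.} You need Khintchine's inequality rather than a purely $L^2$ argument. The iterated version you describe (Khintchine in $s$, Minkowski, Khintchine in $t$) works, including the constant terms and complex coefficients, and gives $C_0=\sqrt3$.

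\textbf{One presentational point.} Drop the remark about first working with partial sums, since partial sums give no control of $\operatorname*{ess\,sup}_E$. The version you settle on is the right one: work with the $L^2$ sum $F$ itself, and pass the $L^4$ bound to it by Fatou or by applying Khintchine directly to $\ell^2$ coefficients.
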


\begin{proof}
Pick an $\varepsilon>0$ small enough such that
\[
0<\frac{1}{\tfrac12-\varepsilon-\sqrt{\varepsilon}} < 2.
\]

By standard measure theory, one can show that for $E\subset[0,1]^2$, for any $\varepsilon>0$, there exists a dyadic rectangle of size $N=N(E)$
\[
R_N = I_N\times J_N, \qquad |I_N|=|J_N|=2^{-N},
\]
such that
\[
|E\cap R_N|\ \ge\ (1-\varepsilon)\,|R_N|.
\]
This number $N$   depends only on $E$ and this is   $N(E)$ claimed in the lemma.

Define
\[
H_{00}(s,t) := \sum_{\substack{0\le n\le N\\0\le k\le N}} a_{n,k}\,r_n(s)\,r_k(t),
\qquad
F_N(s,t) := \sum_{n>N\ \text{or}\ k>N} a_{n,k}\,r_n(s)\,r_k(t),
\]
so that $F = H_{00}+F_N$.

We split the index set
\[
\{(n,k): n>N\ \text{or}\ k>N\}
= A_1\cup A_2\cup A_3,
\]
where
\[
A_1=\{n>N,\ k>N\},\qquad
A_2=\{n>N,\ 0\le k\le N\},\qquad
A_3=\{0\le n\le N,\ k>N\},
\]
and we also split $F_N$ accordingly as 
\[
F_N = F^{(1)}+F^{(2)}+F^{(3)},
\]
where in each $F^{(i)}$ the indices $(n,k)$ range over the set $A_i$, $i=1,2,3$. Notice that $A_1$, $A_2$, $A_3$ are pairwise disjoint sets. 

The functions $\{r_n(s)r_k(t)\}_{n,k\ge 0}$ form an orthonormal system on
$[0,1]^2$. 
 For every $n,k>N$, the functions $r_n$ and $r_k$ take the values
$\pm 1$ on subsets of equal measure of $I_N$ and $J_N$, respectively; 
consequently, the   family
\[
\{\, r_n(s)\, r_k(t): n,k>N \,\}
\]
is orthogonal on the rectangle $R_N$. 

Thus
\begin{equation}\label{F1}
\iint_{R_N} |F^{(1)}(s, t)|^2 ds \, dt
= |R_N|\sum_{\substack{n>N\\k>N}} |a_{n,k}|^2.
\end{equation}

But when $k\le N$ we have that $r_k(t)$    is constant
on~$J_N$.  Writing $r_k(t)=\beta_k=\pm1$ for this constant value, we have
\begin{equation}\label{F2E}
F^{(2)}(s,t)
= \sum_{\substack{n>N\\ k\le N}} a_{n,k}\,\beta_k\,r_n(s),
\end{equation}
and the functions $\{r_n:n>N\}$ remain orthogonal on~$I_N$.

The same argument gives
\begin{equation}\label{F3E}
F^{(3)}(s,t)
= \sum_{\substack{0\le n\le N\\ k> N}} a_{n,k}\,   \alpha_n\,r_k(t), 
\end{equation}
where   $\alpha_n=r_n(t)=\pm 1$  is constant.

By \eqref{F2E}, we have that
\begin{align*}
\langle F^{(1)}, F^{(2)} \rangle_{L^2(R_N)}
&=
\iint_{R_N} 
\bigg(\sum_{(n,k)\in A_1} a_{n,k} r_n(s) r_k(t)\bigg)
\overline{
\bigg(\sum_{\substack{n'>N\\ 0\le k'\le N}} a_{n',k'}\,\beta_{k'}\,r_{n'}(s)\bigg)
}\, ds\,dt \\
&=\int_{I_N} 
 \sum_{k>N}\sum_{n>N}a_{n,k} r_n(s)\int_{J_N}r_k(t)\, dt \, \overline{
\bigg(\sum_{\substack{n'>N\\ 0\le k'\le N}} a_{n',k'}\,\beta_{k'}\,r_{n'}(s)\bigg)
} 
\, ds. 
\end{align*}
By the fact that $\int_{J_N} r_k(t)\,dt=0$ for $k>N$ we conclude that 
\[
    \langle F^{(1)}, F^{(2)} \rangle_{L^2(R_N)}=0.
\]
Similarly, we have that 
\[
    \langle F^{(1)}, F^{(3)} \rangle_{L^2(R_N)}=0.
\]

Then we show that  $\langle F^{(2)}, F^{(3)} \rangle_{L^2(R_N)}=0$. Using \eqref{F2E} and \eqref{F3E}, we have
\begin{align*}
\hspace{-.2in}&\langle F^{(2)}, F^{(3)} \rangle_{L^2(R_N)} \\
&=
\iint_{R_N} 
\bigg(\sum_{\substack{n>N\\ k\le N}} a_{n,k}\,\beta_k\,r_n(s)\bigg)
\overline{
\bigg(\sum_{\substack{0\le n'\le N\\ k'> N}} a_{n',k'}\, \alpha_{n'}\,r_{k'}(t) \bigg)
}\, ds\,dt \\
&= 
\bigg(\sum_{n>N} \Big(\sum_{0\le k\le N}a_{n,k}   \beta_k \Big)\int_{I_N}r_n(s)\,ds\bigg)
\overline{
\bigg( \sum_{k'>N}\Big(\sum_{0\le n'\le N} a_{n',k'}   \alpha_{n'} \Big) \int_{J_N}r_{k'}(t)\,dt\bigg)
}
\end{align*}
Then by the fact that $\int_{I_N}r_n(s)\,ds=0$ and $\int_{J_N} r_{k'}(t)\,dt=0$ for $n,k'>N$, we deduce
\[
\langle F^{(2)}, F^{(3)} \rangle_{L^2(R_N)}=0.
\]

 Therefore, for $i\neq j$, we have
\[
\langle F^{(i)}, F^{(j)}\rangle_{L^2(R_N)} = 0,
\qquad i\ne j.
\]
That is, $F^{(i)}$ and $F^{(j)}$ are orthogonal on~$R_N$.  Hence
\[
\iint_{R_N} |F_N(s, t)|^2 ds\, dt
= \sum_{i=1}^3 \iint_{R_N} |F^{(i)}(s, t)|^2 ds\, dt.
\]

Then by \eqref{F1}, we have the correct bound
\begin{equation}\label{boundF}
|R_N|\sum_{\substack{n>N\\k>N}} |a_{n,k}|^2\le\iint_{R_N} |F_N(s, t) |^2 ds \, dt.
\end{equation}

We also have for any measurable set $E'\subset[0,1]^2$
\begin{align*}
&\int_{E'} \Bigl|\sum_{n,k\ge 0} a_{n,k} r_n(s)r_k(t)\Bigr|^2\,ds\,dt\\
&\le
|E'|\sum_{n,k=0}^\infty |a_{n,k}|^2
+
\sum_{(n,k)\neq (n',k')} a_{n,k} \overline{a_{n',k'}}
\int_{E'} r_n(s)r_k(t)r_{n'}(s)r_{k'}(t)\,ds\,dt\\
&\le |E'|\sum_{n,k=0}^\infty |a_{n,k}|^2
+
\Biggl(
   \sum_{(n,k)\neq (n',k')} |a_{n,k} \overline{a_{n',k'}}|^2
\!\Biggr)^{\!\frac12}
\!\!
\Biggl(
   \sum_{(n,k)\neq (n',k')}
   \Bigl|\int_{E'} r_n(s)r_k(t)r_{n'}(s)r_{k'}(t)\,ds\,dt\Bigr|^2 \!
\Biggr)^{\!\frac12}\\
&\le
|E'|\sum_{n,k=0}^\infty |a_{n,k}|^2
+
\Biggl(\sum_{n,k=0}^\infty |a_{n,k}|^2 \!\Biggr)^{\!\frac12}\! 
\Biggl(
   \sum_{\substack{(n,k)\ne(n',k')}} |\langle \chi_{E'}, r_nr_k \cdot r_{n'} r_{k'} \rangle|^2
\Biggr)^{\!\frac12}
\end{align*}
using the inequality
\[
   \sum_{\substack{n,k,n',k'\ge 0 \\ (n,k)\ne (n',k')}}
   |\langle f, r_nr_k \cdot r_{n'} r_{k'}\rangle|^2
   \le \|f\|_{L^2}^2
\]
for all $f\in L^2([0,1]^2)$., thus we have that 
\begin{equation}\label{sqrt}
   \int_{E'} \Bigl|\sum_{n,k\ge 0} a_{n,k} r_n(s)r_k(t)\Bigr|^2\,ds\,dt \le \Big(|E'|+\sqrt{|E'|}\Big)
 \sum_{n,k\ge0} |a_{n,k}|^2.
\end{equation}

We now define sets 
\[
R_N^{++}
= \biggl\{\, (s,t)\in R_N : F_N(s,t)  > 0 \,\biggr\},
\]
\[
R_N^{--}
= \biggl\{\, (s,t)\in R_N : F_N(s,t) < 0 \,\biggr\},
\]
\[
R_N^{0}
= \biggl\{\, (s,t)\in R_N : F_N(s,t)  = 0 \,\biggr\}.
\]

It is straightforward to verify  that the disjoint sets $R_N^{++}$ and $R_N^{--}$ 
have equal measure; the problem is that it may not be the case that their union is equal to 
$R_N$.  To arrange for this to happen, we find disjoint subsets 
$R_{N}^{0,+}$ and $R_{N}^{0,-}$ of $R_N^{0}$ of equal measure whose union 
is $R_N^{0}$ and we define 
\[
R_N^{+} = R_N^{++} \cup R_{N}^{0,+}, 
\qquad 
R_N^{-} = R_N^{--} \cup R_{N}^{0,-}.
\]
Then we have $R_N^{+} \cup R_N^{-} = R_N$ and by construction 
$|R_N^{+}| = |R_N^{-}| = |R_N|/2$.  Moreover we have that 
\[
F_N(s,t)  \ge 0 \quad \text{on } R_N^{+},
\qquad 
F_N(s,t)  \le 0 \quad \text{on } R_N^{-}.
\]

We now decompose:
\begin{align*}
&\hspace{-0.3in}\int_{R_N}|F_N(s, t) |^2 \, ds\, dt\\
&=\int_{R_N^+\cap E}|F_N(s, t) |^2\, ds\, dt
 +\int_{R_N^-\cap E}|F_N(s, t) |^2\, ds\, dt
 +\int_{R_N\cap E^\complement}|F_N(s, t) |^2\, ds\, dt.
\end{align*}
Clearly, we have 
\[
\int_{R_N^-\cap E}|F_N(s, t)|^2\, ds\, dt \le \int_{R_N^-}|F_N(s, t)|^2\, ds\, dt = \frac12 \int_{R_N}|F_N(s, t)|^2\, ds\, dt,
\]
because $|R_N^-|=\frac12|R_N|$.

For the integral over $R_N\cap E^\complement$ we write $|R_N|=2^{-N}$ and rescale $t\mapsto u$.  Then
\[
\int_{R_N\cap E^\complement} |F_N(s, t)|^2 \, ds\, dt
\le 
\Bigg(\frac{|R_N\cap E^\complement|}{|R_N|} + \sqrt{\frac{|R_N\cap E^\complement|}{|R_N|}}\,\Bigg)
\int_{R_N}|F_N(s, t)|^2 \, ds\, dt,
\]
by applying \eqref{sqrt} to the rescaled series $\sum_{n,k\ge1} a_{n+N,k+N}r_n(s)r_k(t)$.

Putting things together yields 
\[
\Big(\tfrac12 - \tfrac{|R_N\cap E^\complement|}{|R_N|} - \sqrt{\tfrac{|R_N\cap E^\complement|}{|R_N|}}\Big)
\int_{R_N}|F_N(s, t)|^2 \, ds\, dt
\le 
\int_{R_N^+\cap E}|F_N(s, t)|^2 \, ds\, dt,
\]
or the analogous inequality with $R_N^+$ replaced by $R_N^-$.

Recall that
\[
F(s,t)=H_{00}(s,t) + F_N(s,t),\qquad
H_{00}(s,t)=\sum_{0\le n,k\le N} a_{n,k} r_n(s)r_k(t),
\]
where $H_{00}(s,t)=C$ is a constant, since $r_n$ for $n\le N$ are constant on $I_N$, and $r_k$ for $k\le N$ are constant on $J_N$.
Assume  first that $C>0$.  
Then on $R^+_{{N}}$, we have $F_N\ge0$, hence
\[
|F_N(s,t)| = F_N(s,t) \le C+F_N(s,t)=|F(s,t)|.
\]
Thus
\[
\int_{R_N^+\cap E}|F_N(s,t)|^2 \, ds\, dt\le \int_{R_N^+\cap E}|F(s,t)|^2 \, ds\, dt.
\]

For the alternative case $C<0$, we have that   $F_N\le 0$ on $R^-_{ N}$, hence
\[
|F_N(s,t)| = -F_N(s,t) \le -C-F_N(s,t)=|F(s,t)|.
\]
Thus
\[
\int_{R_N^-\cap E}|F_N(s,t)|^2 \, ds\, dt\le \int_{R_N^-\cap E}|F(s,t)|^2 \, ds\, dt.
\]

Then without loss of generality, we can assume that $C>0$, then by \eqref{boundF} and the choice of $\varepsilon$, we have
\begin{align*}
\sum_{\substack{n>N\\k>N}} |a_{n,k}|^2 
&\le \frac{1}{|R_N|}\int_{R_N}|F_N(s,t)|^2ds dt \\
& \le
\frac{1}{|R_N|}\frac{1}{\tfrac12-\varepsilon-\sqrt{\varepsilon}}
\int_{R_N^+\cap E}|F(s,t)|^2 ds dt\\
&\le \frac{2}{|R_N|}\int_{R_N^+\cap E}|F(s,t)|^2ds dt .
\end{align*}
Since $|R_N^+|=\frac12|R_N|$, this becomes
\[
|R_N|\sum_{\substack{n>N\\k>N}} |a_{n,k}|^2 \le \frac{1}{|R_N^+|}\int_{R_N^+\cap E}|F(s, t) |^2 \, ds\, dt.
\]

Thus we know that there exists $N=N(E)$ such that 
\[
\sum_{\substack{n>N\\k>N}} |a_{n,k}|^2
\le \frac{1}{|R_N|}\; \operatorname*{ess\,sup}_{(s,t)\in E} |F(s,t)|^2,
\]
Taking square roots proves the lemma with $A(E)=\sqrt{\frac{1}{|R_N|}} = 2^N $.
\end{proof}

\section{The proof of Theorem~\ref{thm1}}\label{proofThm1} 

We now have the tools required to prove Theorem~\ref{thm1}. In this section we provide the proof. 

\begin{proof}[Proof of Theorem~\ref{thm1}] 
We assume that conclusion \eqref{con} is false and we will reach a contradiction. Then for each $n \in \mathbb{N}$, there exist non-zero a.e. functions $f'_n \in L^p(M)$, $    h'_n \in L^q(M)$, and $\alpha_n > 0$ such that:
\[
\mu\left( \left\{ x \in M : T^*\left(\frac{f'_n}{\alpha_n}, \frac{h'_n}{\alpha_n} \right)(x) > 1 \right\} \right) \geq n \left\| \frac{f'_n}{\alpha_n} \right\|_{L^p}^r \left\| \frac{h'_n}{\alpha_n} \right\|_{L^q}^r.
\]
Thus we can reduce matters  to the following situation: $f_n=\frac{f'_n}{\alpha_n} \in L^p(M)$, $h_n=\frac{h'_n}{\alpha_n} \in L^q(M)$, $E_n := \{ x : T^*(f_n, h_n)(x) > 1 \}$. Then:
\[
\mu(E_n) \geq n \|f_n\|_{L^p}^r \|h_n\|_{L^q}^r.
\]
Apply Lemma~\ref{lm21} to extract a subsequence $n_k$ of the natural numbers
such that the triple $(f_{n_k}, h_{n_k}, E_{n_k})$ satisfies 
\begin{align}\begin{split}\label{999}
    \sum_k \|f_{n_k}\|_{L^p}^r \|h_{n_k}\|_{L^q}^r &< \infty, \\
    \sum_k \mu(E_{n_k}) &= \infty.
\end{split}\end{align}

By bilinearity, it is not hard to show that for any $\alpha>0$, condition \eqref{999} for $(f_{n_k}, h_{n_k}, E_{n_k})$ and for $(\alpha f_{n_k}, \alpha^{-1}h_{n_k}, E_{n_k})$ are equivalent. Now we take $\alpha=\alpha_{n_k}=||f_{n_k}||_{L^p}^{-\frac{r}{q}}||h_{n_k}||_{L^q}^\frac{r}{p}$, then we have that:
\[
||\alpha_{n_k}f_{n_k}||_{L^p}^p=\alpha_{n_k}^p||f_{n_k}||_{L^p}^p=||f_{n_k}||_{L^p}^{-\frac{rp}{q}}||h_{n_k}||_{L^q}^\frac{rp}{p}||f_{n_k}||_{L^p}^p= \|f_{n_k}\|_{L^p}^r \|h_{n_k}\|_{L^q}^r.\]
Similarly,  we have 
$$
||\alpha_{n_k}^{-1}g_{n_k}||_{L^q}^q=\|f_{n_k}\|_{L^p}^r \|h_{n_k}\|_{L^q}^r .
$$

For notational convenience, we replace $(\alpha_{n_k}f_{n_k}, \alpha_{n_k}^{-1}h_{n_k}, E_{n_k})$ by $(f_n, h_n, E_n)$. Then we have the following situation:

For $n\in\mathbb N$, there are $f_n \in L^p(M)$, $h_n \in L^q(M)$, and   $E_n = \{ x : T^*(f_n, h_n)(x) > 1 \}$ such that 
\[
\mu(E_n) \geq n \|f_n\|_{L^p}^r \|h_n\|_{L^q}^r,
\]
\[\sum_n \| f_n\|_{L^p}^p   =\sum_k \|f_{n_k}\|_{L^p}^r \|h_{n_k}\|_{L^q}^r <\infty,
\]
\[
\sum_n \| h_n\|_{L^q}^q  =\sum_k \|f_{n_k}\|_{L^p}^r \|h_{n_k}\|_{L^q}^r <\infty.
\]

Apply Lemma~\ref{lm31} to choose $R_n \to \infty$ such that:
\[
\sum_n \|R_n f_n\|_{L^p}^p  < \infty,
\]
\[
\sum_n \|R_n h_n\|_{L^q}^q  < \infty.
\]

For a  function $F$ on $M\times [0,1]$ we introduce the slices
$$
F_x(t)=F^t(x)=F(x,t),  \qquad H_x(s)=H^s(x)=H(x,s).
$$

We claim that there exists a pair of functions $F(x,t)$, $H(x,s)$ both defined on $M\times [0,1]$ 
with the   properties:
\begin{enumerate}[label=(\roman*)]
    \item For almost every \( (t,s) \in [0,1]^2 \), we have \( F^t \in L^p(M) \), \(H^s \in L^q(M) \), and hence by assumption we have  \( T_m(F^t, H^s)  \in L^r(M) \) for each \( m \).
    \item For almost every \( (t,s)\in [0,1]^2 \), the maximal function
    \[
    T^*(F^t, H^s)  (x):= \sup_{m \geq 1} |T_m(F^t, H^s) (x)| = \infty
    \]
    for almost every \( x \in M \).
\end{enumerate}

Note that the validity of (i) and (ii) leads to a contradiction since the assumption is that for every \( f \in L^p(M) \), \( h \in L^q(M) \), the limit
\[
\lim_{m \to \infty} T_m(f, h)(x)
\]
exists for almost every \( x \in M \). Thus, the desired inequality \eqref{con} must hold.

Let $I=[0,1]$.   
We define measurable functions  on $M \times I$ by setting
$$F_N(x,t)=\sum_{n=1}^N r_n(t)R_n\tau_{g_n}(f_n)(x),$$
$$H_N(x,s)=\sum_{k=1}^N r_k(s)R_k\tau_{g_k}(h_k)(x),$$
for $x\in M$. For $ N_2>N_1$ natural numbers we consider  
$$\int_I \int_M |F_{N_1}(x,t) - F_{N_2}(x,t)|^pdxdt.$$
We need the following observation. Let
$$F(t) = \sum_{n=N_1}^{N_2} b_nr_n(t),$$
then for $1 \le p \le 2$
$$\int_0^1|F(t)|^p dt\le\bigg(\int_0^1|F(t)|^2dt\bigg)^{\frac{p}{2}}= \bigg(\sum_{n=N_1}^{N_2} |b_n|^2\bigg)^{\frac{p}{2}} \le\sum_{n=N_1}^{N_2} |b_n|^p.$$
Notice $||f_n||_{L^p(M)} = ||\tau_{g_n}(f_n)||_{L^p(M)}$ by translation invariance, then by Fubini's theorem, we obtain
$$\int_I \int_M |F_{N_1}(x,t) - F_{N_2}(x,t)|^pdxdt \le \sum_{n=N_1}^{N_2} ||R_nf_n||_{L^p}^p,$$
and similarly 
$$\int_I \int_M |H_{N_1}(x,s) - H_{N_2}(x,s)|^qdxds \le \sum_{n=N_1}^{N_2} ||R_nh_n||_{L^q}^q.$$
Then the sequence $F_N(x,t)$ is Cauchy in $L^p(M \times I)$, since the series $\sum_{n=1}^\infty ||R_nf_n||_{L^p}^p$ is convergent. Hence there exists a function  $F(x,t) $ in $L^p(M \times I)$ such that 
$$\|F_N(\cdot,\cdot)-F(\cdot,\cdot)\|_{L^p(M \times I)} \to 0$$
as $N \to \infty$. Define
\[
G_N(t)
:= \|F_N(\cdot,t)-F(\cdot,t)\|_{L^{p}(M)}^{p}
= \int_I |F_N(x,t)-F(x,t)|^{p}\,dt .
\]
By Fubini's theorem we have 
\[
\|F_N - F\|_{L^{p}(M\times I)}^{p}
= \int_I G_N(x)\,dt.
\]
Since $F_N \to F$ in $L^{p}(M\times I)$, the right-hand side converges to $0$.
Thus $G_N \to 0$ in $L^{1}(I)$.

Convergence in $L^{1}(I)$ implies the existence of a subsequence
$G_{N_j}$ of $G_N$ such that $G_{N_j}(t)\to 0$ for $t\in I$, except for a subset of measure zero which we denote by $\mathcal A$.
Equivalently, we have 
\begin{equation}\label{FB}
\|F_{N_j}(\cdot,t)-F(\cdot,t)\|_{L^{p}(M)}
\longrightarrow 0
\qquad\text{for } t\in I\setminus\mathcal A, \qquad  \textup{(as $j \to \infty$).}
\end{equation}
Similarly, there exists an $H(x,s)\in L^q(M \times I)$ such  that 
$$
\|H_N(\cdot,\cdot)-H(\cdot,\cdot)\|_{L^p(M \times I)} \to 0,
$$ 
and there is a subsequence $N_{j_k}$ of $N_j$ and a subset $\mathcal B $ of $I$ of measure zero such that
\begin{equation}\label{HB}
||H_{N_{j_k}}(\cdot,s)-H(\cdot,s)||_{L^q(M)} \to 0\qquad\text{for } s\in I\setminus\mathcal B 
\qquad  \textup{(as $k \to \infty$).}
\end{equation}

For convenience, we drop the double index of the subsequence and we denote 
$F_{N_{j_k}}=F_N$ and $H_{N_{j_k}}=H_N$
We also recall the notation 
 $(F_{N})_x(t)=(F _{N})^t(x)=F_N(x,t)$ and $(H_{N})_x(t)=(H _{N})^t(x)=H_N(x,t)$

Then for any fixed $(s,t)\in (I\setminus\mathcal A)\times(I\setminus\mathcal B)$, since \eqref{FB} and \eqref{HB} are valid, we have that \( F^t=F(\cdot,t) \in L^p(M) \) and  \(H^s=H(\cdot,s) \in L^q(M) \).
This proves (i).

Let us now fix $m$. Since $T_m$ is bounded operator, in view of the preceding facts, we have for every fixed $t \in I\setminus\mathcal A $ and $s \in I\setminus\mathcal B $,
\begin{align*}
\big\|T_m&\big(F^t,H^s\big) -T_m\big( (F_{N_k^{t,s}})^t,(H_{N_k^{t,s}})^s\big) \big\|^{r}_{L^r(M)}\\
=&\big\|T_m\big(F^t,H^s-(H_{N_k^{t,s}})^s\big) +T_m\big(F^t -F_{N_k^{t,s}}^t,(H_{N_k^{t,s}})^s\big) \big\|^{r}_{L^r(M)}\\
\le&\big\|T_m\big(F^t,H^s-(H_{N_k^{t,s}})^s\big) \big\|^{r}_{L^r(M)}+\big\|T_m\big(F^t -(F_{N_k^{t,s}})^t,(H_{N_k^{t,s}})^s\big) \big\|^{r}_{L^r(M)}\\
\le&\big\|F^t\|_{L^p(M)}^{r}\|H^s-(H_{N_k^{t,s}})^s\big\|_{L^q(M)}^{r}+\big\|F^t-(F_{N_k^{t,s}})^t\|_{L^p(M)}^{r}\| (H_{N_k^{t,s}})^s\big\|_{L^q(M)}^{r} \to 0,
\end{align*}
as $k \to \infty$. 
Hence, for every $t \in I\setminus\mathcal A $ and $s \in I\setminus\mathcal B $ 
there is a subset $(\mathscr C_m)^{t, s}$ of $M$ 
of measure zero such that 
\begin{equation}\label{cc}
T_m\big(F^t,H^s\big)(x) =\sum_n \sum_k r_n(t)r_k(s)R_nR_kT_m\big(\tau_{g_n}(f_n),\tau_{g_k}(h_k)\big)(x),\quad x\in M\setminus (\mathscr C_m)^{t, s}.
\end{equation}
Denote $\mathscr C^{t, s}  =\bigcup_{m=1}^{\infty}(\mathscr C_m)^{t, s} $, which is still a subset of $M$ of measure zero. Then we have that for every $t \in I\setminus\mathcal A $ and $s \in I\setminus\mathcal B $, \eqref{cc} is true for all $m$ when  $x\in M\setminus (\mathscr C_m)^{t, s}$.

Moreover, by an argument similar to that above, there is a subset $\mathscr D$ of $M$ of measure zero, such that for every 
$x \in M\setminus \mathscr D$, there
is a subsequence $(N_j)_x$ of natural numbers such that $T_m\big(F_{(N_j)_x},H_{(N_j)_x}\big)(t,s)$ converges to $T_m\big(F_x,H_x\big)(t,s)$ in   $L^r(I^2)$. Then for every 
$x \in M\setminus \mathscr D$, there are subsets $(\mathscr E_m)_x$, $(\mathscr F_m)_x$ of $I$ 
of measure zero such that 
\begin{align}
\begin{split}\label{beq2}
 T_m\big(F_x,H_x\big)(t,s)  
 &=\sum_n \sum_k r_n(t)r_k(s)R_nR_kT_m\big(\tau_{g_n}(f_n),\tau_{g_k}(h_k)\big)(x),\\
 &\textup{whenever}\quad (t,s)\in I\setminus (\mathcal E_m)_x\times I\setminus(\mathcal F_m)_x.
\end{split}
\end{align}
Denote $\mathscr E_{x}  =\bigcup_{m=1}^{\infty}(\mathscr E_m)_{x} $ and $\mathscr F_x =\bigcup_{m=1}^{\infty}(\mathscr F_m)_x $ noting that  $\mathscr E_x$ and $\mathscr F_x$  are still subsets of $I$ of measure zero.

The next claim is to prove that there is a subset $\mathcal F$ of $M$ of measure zero  such that for all $x \in M\setminus \mathcal F$ we have
\begin{equation}\label{eq3}
\sum_n \sum_K R_n^2R_k^2| T_m\big(\tau_{g_n}(f_n),\tau_{g_k}(h_k)\big)(x)|^2<\infty 
\qquad \textup{for every $m=1,2,\dots$}. 
\end{equation}
This may be seen as follows: First, $p,q\ge 1$, and $\sum ||R_nf_n||^p_{L^p}< \infty$, $\sum ||R_kh_k||^q_{L^q}< \infty$ by our construction. Next, since $||f_n||_{L^p} = ||\tau_{g_n}(f_n)||_{L^p}$ and  $||h_k||_{L^q} = ||\tau_{g_n}(h_k)||_{L^q}$ and each $T_m$ is a bounded operator, then we have that
\begin{align*}
&\hspace{-0.3in}\sum_n\sum_k\int_M| R_n  R_k T_m(\tau_{g_n}(f_n),\tau_{g_k}(h_k))(x)|^rd\mu\\
=&\sum_n\sum_k R_n
^rR_k^r\int_M|T_m(\tau_{g_n}(f_n),\tau_{g_k}(h_k))(x)|^rd\mu\\
\le&\sum_n\sum_k R_n
^rR_k^r C_m^r \|f_n\|^r_{L^p(M)}\|h_k\|^r_{L^q(M)}\\
=& C_m^r\sum_n\sum_k \Big(\|R_nf_n\|_{L^p(M)}\|R_kh_k\|_{L^q(M)}\Big)^r\\
\le&C_m^r\bigg(\sum_n \|R_nf_n\|^p_{L^p(M)}\bigg)^{\frac{r}{p}}\cdot \bigg(\sum_k \|R_kh_k\|^q_{L^q(M)}\bigg)^{\frac{r}{q}},
\end{align*}
the last inequality is obtained by   H\"older's inequality. 
Thus,  there is a subset $\mathcal F$ of $M$ of measure zero  such that for all $x \in M\setminus \mathcal F$ one has
$$
\sum_n\sum_k| R_n  R_k T_m(\tau_{g_n}(f_n),\tau_{g_k}(h_k))(x)|^r<\infty
\qquad \textup{for every $m=1,2,\dots$}. 
$$
Since $\frac{1}{2} \le r \le 1$, \eqref{eq3} is proved.

Combining the above facts we conclude that  for every $x\in M\setminus \mathcal F$, the Rademacher series \eqref{beq2} is a well-defined  function of $(t,s)$ on the set  $(I\setminus \mathscr E_x)\times (I\setminus\mathscr F_x)$.

Suppose now, contrary to (ii), that $T^*(F^t,H^s)(x) < \infty$ for a set of positive
$(x,t,s)$ measure. Then there exists a set $S$ of positive measure in $M \times I^2$, where $T^*(F^t,H^s)(x)$ is bounded say by $A$. That is, $T^*(F^t,H^s)(x) \le A$, whenever $(x,t,s) \in S$.
And therefore, for every $m$,
\begin{equation}\label{beq4}
|T_m(F^t,H^s)(x) | \leq A ,\quad (x,t,s)\in S.
\end{equation}
Let now $E_x=S \cap \big(\{
x\}\times I^2\big)$, $E_x$ may be considered for each $x \in M\setminus \mathcal F$, a subset
of $I^2$ (more specifically, a subset of $(I\setminus \mathscr E_x)\times ( I\setminus\mathscr F_x)$). Since $S$ has positive measure in $M \times I$, $E_x$ is Lebesgue measurable
for almost every $x$, and there is a subset $M_0$ of $M$ with positive measure, such that $|E_x|>0$ for all $x \in M_0$.

So now we can apply Lemma~\ref{lm33}, to the case of the Rademacher series \eqref{beq2}, remembering \eqref{beq4};  the set $  E$ in Lemma~\ref{lm33} will be $E_x$, where $x \in M_0$. Then there exists a $N(x),A(E_x)$ independent of $s,t,m$ such that 
\begin{align*}
& \hspace{-0.3in}\bigg(\sum_{\substack{n \ge N(x) \\  k \ge N(x)}} \bigg|r_n(t)r_k(s)R_nR_kT_m\big(\tau_{g_n}(f_n),\tau_{g_k}(h_k)\big)(x)\bigg|^2\bigg)^{\frac{1}{2}} \\
&\le\
A(E_x)\,\operatorname*{ess\,sup}_{(s,t)\in E}|T_m(F_x,H_x)(s,t)|\\
&\le A(E_x)\cdot A=:A(x), 
\end{align*}
thus we have the following:
$$\bigg(\sum_{\substack{n \ge N(x) \\  k \ge N(x)}} | R_nR_k T_m\big(\tau_{g_n}(f_n),\tau_{g_n}(h_n)\big)(x)|^2\bigg)^{\frac{1}{2}} \le A(x),\quad x\in M_0,$$
where  $A(x)$, $N(x)$ are independent of $m$.
Hence 
\begin{equation}\label{beq5}
| R_n^2T_m\big(\tau_{g_n}(f_n),\tau_{g_n}(h_n)\big)(x)|\le A(x),\quad x\in M_0,
\end{equation}
if $n > N(x)$.
Now $T_m(\tau_{g_n}f_n, \tau_{g_n}h_n) = \tau_{g_n} T_m(f_n,h_n)$. Taking the $\sup$ (over $m$) of the left side of \eqref{beq5},
$$
R_n^2(T^*(f_n,h_n))(g_n^{-1}(x)) \le A(x),\quad x \in M_0,
$$
whenever $n > N(x)$.

Now if $x_0 \in F_0$, by Lemma~\ref{lm24}, then $x_0$ is contained in infinitely many $F_n$, $F_n = g_n[E_n]$. But $T^*(f_n,h_n)(y_n) > 1$, if $y_n \in E_n$. Therefore $x_0 \in F_0$, implies
$R_n^2T^*(f_n,h_n)(g_n^{-1}(x_0)) > R_n^2  \to \infty$, for infinitely many $n$'s. Hence if $x \in F_0$, these
are infinitely many $n$ so that:
$$R_n^2T^*(f_n,h_n)(g_n^{-1}(x_0)) > A(x_0).$$
Thus \eqref{beq5} implies that if $x_0\in M_0$, then $x_0\not\in F_0$. This shows that $M_0$ is the subset of the complement of $F_0$, and therefore
$\mu(M_0) = 0$, contrary to what was found earlier.
We have therefore proved that the function $F(x, t)$ and $H(x,s)$ satisfies the two
conditions (i) and (ii) above which leads to a contradiction with the hypothesis of Theorem~\ref*{thm1}, unless an inequality of the
type \eqref{con} holds for some $C$. This contradiction shows that failure of the weak-type bound would force divergence on a set of positive measure, completing the proof. 
\end{proof}

\section{The Extension to Borel measures}

In this section, we focus on  the case where $p=q=1$ and $r=1/2$ in which we extend Theorem~\ref{thm1} to Borel measures. This extension could be helpful in certain  applications; see Section \ref{APP}.   

Throughout  this section we let $(M,d\mu)$ be a compact abelian group equipped with Haar measure $\mu$. We denote by 
   $\mathcal C(M)$   the class of continuous functions on $M$  with the sup topology. 
For each $m=1,\dots,N$, let 
\[
T_m:L^1(M)\times L^1(M) \to \mathcal  C(M)
\]
be a bounded bilinear operator that admits the kernel representation
\[
T_m(f,h)(x)=\iint_{M\times M} K_m(x-y,\,x-z)\,f(y)\,h(z)\,d\mu(y)\,d\mu(z),
\quad K_m\in L^\infty(M\times M),
\]
such representations arise naturally for translation-invariant bilinear operators on compact groups.
Extend $T_m$ to finite Borel measures $(\nu_1,\nu_2)$ by the same formula and we denote this extension by  $ T_m(\nu_1,\nu_2) $. Let $\|\nu_1\|,\|\nu_2\|$ be the total variation of $\nu_1,\nu_2$ respectively. Then we have the following theorem.

\begin{theorem}\label{thm3}
Suppose that for every \( f, g \in L^1(M) \), we have
    \[
    \limsup_{m \to \infty} |T_m(f, g)(x)|<\infty
    \]
 on a set of positive measure (which may depend on \( f, g \)).

Then there exists a constant \( A > 0 \)   such that for all bounded Borel measures \( \nu_1, \nu_2 \) on \( M \), and all \( \alpha > 0 \),
\[
\mu\left( \left\{ x \in M : T^*(\nu_1, \nu_2)(x) > \alpha \right\} \right)
\leq \frac{A}{\sqrt\alpha} \|\nu_1\| \cdot \|\nu_2\| .
\]
\end{theorem}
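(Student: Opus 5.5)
The plan is to argue in two stages: first obtain a weak-type estimate for $L^1$ densities by a contradiction argument modeled on Section~\ref{proofThm1}, then pass to finite Borel measures by mollification. I take $N=\infty$ in the definition of $T^*$, which is the only case where the hypothesis on $\limsup_{m}$ has content.

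\textbf{Step 1 (the $L^1$ statement by contradiction).} Suppose the inequality fails for $L^1$ densities. Then there are $f_n,h_n\in L^1(M)$ and $\alpha_n>0$ such that the sets $E_n=\{T^*(f_n,h_n)>\alpha_n\}$ satisfy
\[
\mu(E_n)\ \ge\ n\,\alpha_n^{-1/2}\,\|f_n\|_{L^1}\|h_n\|_{L^1}.
\]
Since $T^*(\lambda f,\lambda^{-1}h)=T^*(f,h)$, I may balance $\|f_n\|_{L^1}=\|h_n\|_{L^1}$ without changing $E_n$ or the right-hand side. Because $\mu(E_n)\le 1$, the quantities $a_n:=\alpha_n^{-1/2}\|f_n\|_{L^1}\|h_n\|_{L^1}$ satisfy $a_n<1/n$. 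Lemma~\ref{lm21} then gives a subsequence with $\sum_k a_{n_k}<\infty$ and $\sum_k\mu(E_{n_k})=\infty$.

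Next I would like to renormalize so that the level becomes $1$ and the norms become summable. At that point Lemma~\ref{lm31} supplies weights $R_n\to\infty$, and Lemma~\ref{lm24} supplies translates $g_n$ with $\mu(\limsup g_n[E_n])=1$. I then form the random series
\[
F(x,t)=\sum_n r_n(t)R_n\,\tau_{g_n}f_n(x),\qquad H(x,s)=\sum_k r_k(s)R_k\,\tau_{g_k}h_k(x).
\]
For $p=q=1$ there is no $L^2$ control in $t$ or $s$, so these series are taken in the sense of absolute convergence in $L^1(M\times I)$. The operators $T_m$ have bounded kernels, so $T_m(F^t,H^s)(x)$ is given by an absolutely convergent double Rademacher series with coefficients $R_nR_kT_m(\tau_{g_n}f_n,\tau_{g_k}h_k)(x)$. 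The coefficients are $\ell^{1/2}$, hence $\ell^2$, summable for a.e.\ $x$, exactly as in the proof of \eqref{eq3}. The remaining steps copy Section~\ref{proofThm1}: apply Lemma~\ref{lm33} on the sets $E_x$ where $T^*\le A$, obtain \eqref{beq5}, and use Lemma~\ref{lm24} to force $\mu(M_0)=0$. This yields $\limsup_m|T_m(F^t,H^s)|=\infty$ a.e.\ in $x$ for a.e.\ $(t,s)$. The hypothesis only asserts finiteness on a set of positive measure, so I upgrade it with a zero--one law. The set where $\limsup_m|T_m(F^t,H^s)|<\infty$ is invariant under the random translations used above, and therefore has measure $0$ or $1$, as in Stein's original argument. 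This gives the contradiction.

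\textbf{Step 2 (from functions to measures).} Take an approximate identity $\varphi_j$ on $M$ and set $f_j=\nu_1*\varphi_j$ and $h_j=\nu_2*\varphi_j$, so that $\|f_j\|_{L^1}\le\|\nu_1\|$ and $\|h_j\|_{L^1}\le\|\nu_2\|$. For each fixed $m$ I would show $T_m(f_j,h_j)(x)\to T_m(\nu_1,\nu_2)(x)$ for a.e.\ $x$. The kernel identity gives
\[
T_m(f_j,h_j)(x)=\iint (K_m*(\tilde\varphi_j\otimes\tilde\varphi_j))(x-y,x-z)\,d\nu_1(y)\,d\nu_2(z),
\]
and $K_m*(\tilde\varphi_j\otimes\tilde\varphi_j)\to K_m$ a.e.\ and boundedly. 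Dominated convergence then gives the limit for a.e.\ $x$, after a Fubini argument in $x$. Next, restrict to finitely many $m\le N$, apply Fatou to the sets $\{\max_{m\le N}|T_m|>\alpha\}$, and let $N\to\infty$ by monotone convergence. This transfers Step~1 to measures.

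\textbf{Main obstacle.} The difficulty is the renormalization in Step~1. The right-hand side $\alpha^{-1/2}\|\nu_1\|\|\nu_2\|$ is not invariant under the joint dilation $(f,h,\alpha)\mapsto(\lambda f,\lambda h,\lambda^2\alpha)$, whereas $\{T^*>\alpha\}$ is. Consequently I cannot reduce to level $1$ while keeping $\sum_k\|f_{n_k}\|_{L^1}$ and $\sum_k\|h_{n_k}\|_{L^1}$ finite. What I would try first is to keep the levels $\alpha_n$ inside the construction. Concretely, replace the divergence \eqref{beq5} of $R_n^2T^*(f_n,h_n)(g_n^{-1}x)$ by divergence of $R_n^2\alpha_n$. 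This requires choosing $R_n$, via Lemma~\ref{lm22} applied to $c_n=\|f_n\|+\|h_n\|$, so that $R_n^2\alpha_n\to\infty$ while $\sum_nR_n(\|f_n\|+\|h_n\|)<\infty$. It is exactly this compatibility that I expect to be delicate. If it fails, the argument naturally yields the scale-invariant bound $A\,(\|\nu_1\|\|\nu_2\|/\alpha)^{1/2}$. That bound implies the stated inequality when $\|\nu_1\|\|\nu_2\|\ge 1$. In the remaining regime one would have to exploit the normalization $\mu(M)=1$, and I do not see how to close that case.
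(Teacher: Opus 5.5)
\textbf{The open case cannot be closed, because the inequality is false as stated.} You leave the case $\|\nu_1\|\,\|\nu_2\|<1$ open, and no argument can close it. The scaling mismatch you identified is a counterexample, not a technical obstacle. Under $(\nu_1,\nu_2,\alpha)\mapsto(\lambda\nu_1,\lambda\nu_2,\lambda^2\alpha)$ the left side is unchanged while the right side is multiplied by $\lambda$. Letting $\lambda\to0$ would therefore force $T^*(\nu_1,\nu_2)=0$ a.e.\ for every pair of measures.

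Concretely, take $K_m\equiv1$ for all $m$, so that $T_m(f,g)=\int_M f\,d\mu\cdot\int_M g\,d\mu$ satisfies every hypothesis. Take $\nu_1=\nu_2=\lambda\mu$ (that is, $f=g\equiv\lambda$) and $\alpha=\lambda^2/2$. The left side is $\mu(M)=1$, while the right side is $\sqrt2\,A\lambda$, which is $<1$ once $\lambda<1/(\sqrt2A)$. The same example refutes the $L^1$ inequality you aim for in Step~1. So the compatibility you hoped for, $R_n^2\alpha_n\to\infty$ together with $\sum_n R_n(\|f_n\|_{L^1}+\|h_n\|_{L^1})<\infty$, cannot hold in general, and the normalization $\mu(M)=1$ cannot rescue it.

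The paper reaches the stated form only through a slip in \eqref{994}. Theorem~\ref{thm1} with $p=q=1$, $r=1/2$ gives $C\alpha^{-1/2}\|f_k\|_{L^1}^{1/2}\|g_k\|_{L^1}^{1/2}$, and the exponents $1/2$ on the norms were dropped. What can be proved is exactly your scale-invariant bound $A\,(\|\nu_1\|\,\|\nu_2\|/\alpha)^{1/2}$.

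\textbf{For the corrected statement, your plan is essentially the paper's and it works.} After normalizing to level $1$ and balancing, failure gives $\mu(E_n)\ge n\|f_n\|_{L^1}=n\|h_n\|_{L^1}$. Lemma~\ref{lm21} then produces summable $L^1$ norms, and the construction of Section~\ref{proofThm1} applies; absolute convergence of the random series suffices when $p=q=1$. Re-running that construction is better than the paper's bare citation of Theorem~\ref{thm1}. That theorem assumes the limit exists a.e., whereas here you only know the $\limsup$ is finite on a set of positive measure. However, your zero--one law is superfluous. The construction already gives $T^*(F^t,H^s)(x)=\infty$ for a.e.\ $x$ and a.e.\ $(t,s)$, which contradicts the hypothesis directly.

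Step~2 coincides with Lemma~\ref{644} followed by Fatou and the monotone limit in $N$, with two caveats:
\begin{itemize}
\item You need to pass to a subsequence to get a.e.\ convergence of the mollified kernels.
\item In your proposal, as in the paper, the ``Fubini argument in $x$'' fails for merely $L^\infty$ kernels when $\nu_1\otimes\nu_2$ is singular. For $\nu_1=\nu_2=\delta_0$ you need the mollified kernels to converge to $K_m$ at the points $(x,x)$, i.e.\ on the diagonal, which is a null set of $M\times M$. Indeed, $T_m(\delta_0,\delta_0)$ is not even well defined for an $L^\infty$ class. Assuming, for example, that the $K_m$ are continuous makes the convergence uniform and repairs this.
\end{itemize}
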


We begin with a useful lemma.
\begin{lemma}\label{644}
Let $T_1,\dots,T_N$  be a finite collection of operators satisfying the conditions above. Let $\nu_1,\nu_2$ be finite Borel measures with finite total variation $\|\nu_1\|,\|\nu_2\|$ and 
set $h_m(x):=T_m(\nu_1,\nu_2)(x)$. Then there exist   sequences $\{f_k\}$, $\{g_k\}$ where $f_k,g_k\in L^1(M)$ with $\|f_k\|_{L^1}\le\|\nu_1\|$ and $\|g_k\|_{L^1}\le\|\nu_2\|$ such that for all $m=1,\dots,N$ we have
$$
\lim_{k\to \infty} T_m(f_k,g_k)(x) \to h_m(x)
$$
for almost all $x\in M$.
\end{lemma}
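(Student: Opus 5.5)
We approximate each measure by an approximate identity. Since $M$ is a compact abelian group, it carries an approximate identity $\{\phi_k\}$ in $L^1(M)$: nonnegative continuous functions with $\int_M\phi_k\,d\mu=1$ whose supports shrink to the identity element (for a metrizable $M$ a sequence suffices; otherwise one uses a net and extracts a sequence adapted to the finitely many continuous kernels below). We set
\[
f_k:=\phi_k*\nu_1,\qquad g_k:=\phi_k*\nu_2 .
\]
These are in $L^1(M)$, and $\|f_k\|_{L^1}\le\|\phi_k\|_{L^1}\|\nu_1\|=\|\nu_1\|$. The same bound holds for $g_k$ with $\|\nu_2\|$. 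This takes care of the norm requirements in Lemma~\ref{644}.

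Next we rewrite $T_m(f_k,g_k)$ using Fubini and the translation structure of the kernel representation. Since $\phi_k*\nu_1(y)=\int\phi_k(y-u)\,d\nu_1(u)$, we get
\[
T_m(f_k,g_k)(x)=\iint K_m^{(k)}(x-u,\,x-v)\,d\nu_1(u)\,d\nu_2(v),
\]
where
\[
K_m^{(k)}(a,b)=\iint K_m(a-y',\,b-z')\,\phi_k(y')\,\phi_k(z')\,d\mu(y')\,d\mu(z').
\]
In other words, $K_m^{(k)}=(\phi_k\otimes\phi_k)*K_m$ is a convolution on the group $M\times M$. Equivalently, this is the change of variables $y=u+y'$, $z=v+z'$.

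It remains to show that $K_m^{(k)}(x-u,x-v)\to K_m(x-u,x-v)$ in a form that can be integrated against $|\nu_1|\otimes|\nu_2|$. Here we use the hypothesis $T_m:L^1\times L^1\to\mathcal C(M)$ boundedly. Testing this on approximate identities shows that the kernel is effectively continuous: $K_m$ agrees a.e. with a continuous function, since otherwise $T_m(\phi_k(\cdot-u),\phi_k(\cdot-v))(x)$ could not converge uniformly. Once $K_m$ is continuous on the compact group $M\times M$, it is uniformly continuous, so $K_m^{(k)}\to K_m$ uniformly. Then
\[
|T_m(f_k,g_k)(x)-h_m(x)|\le\|K_m^{(k)}-K_m\|_\infty\,\|\nu_1\|\,\|\nu_2\|\to0
\]
for every $x$, simultaneously for all $m=1,\dots,N$.

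The main obstacle is this continuity issue. If one only knows $K_m\in L^\infty$, then $h_m$ defined through the representation must be interpreted a.e., and the convergence has to be argued differently. In that case I would first show $K_m^{(k)}\to K_m$ in $L^1(M\times M)$. Then
\[
\int_M|T_m(f_k,g_k)-h_m|\,d\mu\le\|K_m^{(k)}-K_m\|_{L^1(M\times M)}\,\|\nu_1\|\,\|\nu_2\|,
\]
by Fubini and translation invariance of $\mu$ in $x$ for fixed $(u,v)$. This gives $L^1$ convergence, and we then pass to a subsequence converging a.e. Because there are only finitely many $m$, a diagonal choice of subsequence (refining successively for $m=1,\dots,N$) yields a single sequence that converges a.e. for all $m$, which is exactly the conclusion of Lemma~\ref{644}.
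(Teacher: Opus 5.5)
Your setup ($f_k=\phi_k*\nu_1$, $g_k=\phi_k*\nu_2$, the norm bounds, and $T_m(f_k,g_k)(x)=\iint K_m^{(k)}(x-u,x-v)\,d\nu_1(u)\,d\nu_2(v)$ with $K_m^{(k)}=(\phi_k\otimes\phi_k)*K_m$) is exactly the paper's. However, both of your convergence arguments rest on a false step.

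\textbf{The continuity claim.} The hypothesis $T_m:L^1\times L^1\to\mathcal C(M)$ holds automatically for every $K\in L^\infty(M\times M)$. Indeed, $\iint K(x-y,x-z)f(y)h(z)\,dy\,dz=\langle K,\tau_{(x,x)}F\rangle$ with $F(a,b)=f(-a)h(-b)\in L^1(M\times M)$, and translation is continuous in $L^1(M\times M)$. So $K=\mathbf 1_{[0,1/2)\times[0,1/2)}$ on $\mathbb T\times\mathbb T$ is admissible but not a.e.\ equal to a continuous function. The kernels $K_{k,\ell}$ in the paper's differentiation section are of this discontinuous type, so your primary route misses the intended application. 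Your justification also fails: boundedness gives only $\sup_k\|T_m(\phi_k(\cdot-u),\phi_k(\cdot-v))\|_{L^\infty}<\infty$, not uniform convergence, since $\phi_k(\cdot-u)$ is not Cauchy in $L^1$.

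\textbf{The fallback estimate.} The inequality $\int_M|T_m(f_k,g_k)-h_m|\,d\mu\le\|K_m^{(k)}-K_m\|_{L^1(M\times M)}\|\nu_1\|\|\nu_2\|$ is false. For fixed $(u,v)$, the map $x\mapsto(x-u,x-v)$ runs over a coset of the diagonal subgroup, which is a $\mu\otimes\mu$-null set. Translation invariance only makes $\int_M|D(x-u,x-v)|\,d\mu(x)$ independent of the parametrization; it does not turn it into $\|D\|_{L^1(M\times M)}$. For example, with $M=\mathbb T$, $D(a,b)=\mathbf 1_{\{|a-b|<\varepsilon\}}$ and $u=v$, the slice integral is $1$ while $\|D\|_{L^1}=2\varepsilon$. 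You have transplanted the linear inequality $\|D*\nu\|_{L^1}\le\|D\|_{L^1}\|\nu\|$, which has no bilinear analogue.

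What Fubini actually gives is $\int|K_m^{(k)}-K_m|\,d\Lambda$, where $\Lambda$ is the image of $\mu\otimes|\nu_1|\otimes|\nu_2|$ under $(x,u,v)\mapsto(x-u,x-v)$. This measure can be singular with respect to $\mu\otimes\mu$: it is Haar measure on the diagonal when $\nu_1=\nu_2=\delta_0$. In general one gets only the non-decaying bound $2\|K_m\|_{L^\infty}\|\nu_1\|\|\nu_2\|$, which is all the paper's Fubini computation records.

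\textbf{What the paper does instead.} It argues pointwise. It uses $|K_m^{(k)}|\le\|K_m\|_{L^\infty}$ everywhere, extracts from $L^1(M\times M)$ convergence a subsequence with $K_m^{(k_i)}\to K_m$ $\mu\otimes\mu$-a.e., and applies dominated convergence in $(u,v)$ for fixed $x$.

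Be aware that this step also needs the exceptional $\mu\otimes\mu$-null set to be $\Lambda$-null. That holds, for instance, if one of $\nu_1,\nu_2$ is absolutely continuous. When $\nu_1=\nu_2=\delta_0$, even $h_m(x)=K_m(x,x)$ is not determined by the a.e.\ class of $K_m$. Under $\Lambda\ll\mu\otimes\mu$ your $L^1$ route can also be repaired: $|K_m^{(k)}-K_m|\le 2\|K_m\|_{L^\infty}$ tends to $0$ in $\mu\otimes\mu$-measure, hence $\int|K_m^{(k)}-K_m|\,d\Lambda\to0$.
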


\begin{proof}
Let $\{\phi_k\}_{k\ge1}\subset \mathcal C(M)$ be a nonnegative approximate identity
with $\int_M\phi_k\,d\mu=1$, and define
\[
f_k:=\phi_k*\nu_1,\qquad g_k:=\phi_k*\nu_2 .
\]

Then \( f_k, g_k \in L^1(M) \), with
\[
\|f_k\|_{L^1} \leq \|\nu_1\|, \quad \|g_k\|_{L^1} \leq \|\nu_2\|.
\]
By Fubini's theorem  and the definitions of $f_k,g_k$ we have 
\[
\begin{aligned}
T_m(f_k,g_k)(x)
&=\iint K_m(x-y,x-z)\Big(\int \phi_k(y-u)\,d\nu_1(u)\Big)
                      \Big(\int \phi_k(z-v)\,d\nu_2(v)\Big)\,d\mu(y)\,d\mu(z) \\
&=\iint\!\!\Big(\iint \phi_k(r)\,\phi_k(s)\,K_m\big(x-(u+r),\,x-(v+s)\big)\,d\mu(r)\,d\mu(s)\Big)
    d\nu_1(u)\,d\nu_2(v),
\end{aligned}
\]
Then, writing
\[
K_m^{(k)}(a,b):
=\iint \phi_k(r)\,\phi_k(s)\,K_m(a-r,b-s)\,d\mu(r)\,d\mu(s),
\]
we have for each $m$ and all $x\in M$,
\[
T_m(f_k,g_k)(x)
=\iint_{M\times M} K_m^{(k)}(x-u,\,x-v)\,d\nu_1(u)\,d\nu_2(v).
\]

Since $\phi_k\ge 0$ and $\int_M \phi_k\,d\mu=1$, we have for all $(a,b)\in M\times M$,
\[
\begin{aligned}
\big|K_m^{(k)}(a,b)\big|
&\le \iint \phi_k(r)\,\phi_k(s)\,\big|K_m(a-r,b-s)\big|\,d\mu(r)\,d\mu(s) \\
&\le \|K_m\|_{L^\infty(M\times M)} \iint \phi_k(r)\,\phi_k(s)\,d\mu(r)\,d\mu(s) \\
&= \|K_m\|_{L^\infty(M\times M)}.
\end{aligned}
\]
Hence $|K_m^{(k)}|\le \|K_m\|_{L^\infty}$ pointwise for all $k$. 

By the triangle inequality and  Fubini's theorem we obtain,
\[
\begin{aligned}
&\hspace{-0.3in}\|T_m(f_k,g_k)-h_m\|_{L^1(M)}\\
&= \int_M \left| \iint_{M\times M} (K_m^{(k)}-K_m)(x-u,x-v)\, d\nu_1(u)\, d\nu_2(v) \right| d\mu(x) \\
&\le \iint_{M\times M} \left( \int_M \big|(K_m^{(k)}-K_m)(x-u,x-v)\big|\, d\mu(x) \right) d|\nu_1|(u)\, d|\nu_2|(v)\\
&\le  2\|K_m\|_{L^\infty}\iint_{M\times M} d|\nu_1|(u)\, d|\nu_2|(v)\\
&=2\|K_m\|_{L^\infty}\cdot\|\nu_1\|\cdot\|\nu_2\|
\end{aligned}
\]

With the bound above, if we can show that there is a subsequence $\{k_i\}_i$ such that $\lim_{i \to \infty} K_m^{(k_i)}(a,b)=K_m(a,b)$ a.e., then by dominated convergence theorem, we have that for subsequences $f_{k_i}$ and $g_{k_i}$, 
$$
\lim_{ i \to \infty} T_m(f_{k_i},g_{k_i})(x) \to h_m(x) \qquad\textup{a.e.}
$$ 

Since $M$ is compact and $K_m\in L^\infty(M\times M)$, we have $K_m\in L^1(M\times M)$,
and because $\phi_k\otimes\phi_k$ is an approximate identity on $M\times M$,
$\|K_m^{(k)}-K_m\|_{L^1(M\times M)} \to0$.
Therefore, there exists a subsequence $\{k_i\}_i$ such that $\lim_{i \to \infty} K_m^{k_i}(a,b)=K_m(a,b)$ a.e. for all $m=1,\dots,N$. This completes the proof.
\end{proof}

\begin{proof}[Proof of Theorem~\ref{thm3}]
We begin by noticing that
\begin{itemize}
    \item Each \( T_m \) is bounded: there exists \( C_m> 0 \) such that
    \[
    \| T_m(f, g) \|_{\infty} \leq C_m \|f\|_{L^1(M)} \|g\|_{L^1(M)}.
    \]
    
    \item Each \( T_m \) commutes with translations. 
\end{itemize}

Since \( T_m \) is bilinear, bounded, and translation invariant, then by   Theorem~\ref*{thm1} there is a constant $A<\infty$
such that 
\begin{equation}\label{994}
\mu\left( \left\{ x : T^*(f_k, g_k)(x) > \alpha \right\} \right)
\leq \frac{A}{\sqrt\alpha} \|f_k\|_{L^1} \cdot \|g_k\|_{L^1}
\leq \frac{A}{\sqrt\alpha} \|\nu_1\| \cdot \|\nu_2\|.
\end{equation}
Let 
$$
T_N^*(f,h) =\sup_{1\le m\le N}|T_m(f,h)|.
$$

By Lemma~\ref{644}, there are sequences  \( f_k, g_k \in L^1(M) \)   such that :
\[
\lim_{k \to \infty}T_m(f_k, g_k)(x)  =T_m(\nu_1, \nu_2)(x),
\]
 for each \(m=1,2,\dots, N \) and for almost all  \(   x \in M \).

Define $E^\alpha:=\{ x \in M : T^*(\nu_1, \nu_2)(x) > \alpha \}$ and $E_N^\alpha:=\{ x \in M : T_N^*(\nu_1, \nu_2)(x) > \alpha \}$, notice that the sets $E_N^\alpha$ are increasing with $N$ and that the following are valid
\[ 
\mu( E_N^\alpha )=
\mu\left( \left\{ x : T_N^*(\nu_1,\nu_2)(x) > \alpha \right\} \right)\le \mu\left( \left\{ x : T^*(\nu_1,\nu_2)(x) > \alpha \right\} \right)=\mu( E^\alpha ).
\]

Next, we will prove that 
\[
\limsup_{k  \to \infty}
\mu\left( \left\{ x : T_N^*(f_k, g_k)(x) > \alpha \right\} \right)
\geq \mu\left( \left\{ x :  T^*(\nu_1, \nu_2)(x) > \alpha \right\} \right).
\]

For $x\in M$, by Lemma~\ref{644}, we have that $\lim_{k \to\infty}|T_m(f_k,g_k)(x)|= |T_m(\nu_1,\nu_2)(x)|$ for each fixed 
$m\in\{1,2,\dots, N\}$ and for almost all $x\in M$.
Hence, for a.e.\ $x$,
\[
\liminf_{k \to\infty}\  T_N^*(f_k,g_k)(x)\ \ge\ \sup_{1\le m\le N}\ \liminf_{k \to\infty} |T_m(f_k,g_k)(x)|
=T_N^*(\nu_1,\nu_2)(x).
\]
Therefore,
\[
\mu\big( \{x:\,T_N^*(\nu_1,\nu_2)(x)>\alpha\}\big) \le \mu\big(\{x:\,\liminf_{k} T_N^*(f_k,g_k)(x)>\alpha\}\big).
\]
By Fatou's lemma, we have that 
\[
\mu \big( \{x:\,\liminf_{k} T_N^*(f_k,g_k)(x)>\alpha\}\big)\le \liminf_{k \to\infty}\mu \big(\{x:\, T_N^*(f_k,g_k)(x)>\alpha\}\big).
\]
Thus, we have that 
$$
\mu(E_N^\alpha)\le\liminf_{k \to\infty}\mu\big( \{x:\, T_N^*(f_k,g_k)(x)>\alpha\}\big)\le\liminf_{k \to\infty}\mu\big( \{x:\, T^*(f_k,g_k)(x)>\alpha\}\big).
$$

Combining with the bound in \eqref{994}, we deduce  
$$
\mu(E_N^\alpha)\le \frac{A}{\sqrt\alpha} \|\nu_1\| \cdot \|\nu_2\|. 
$$
In view of  the monotonicity of $E_N^\alpha$, letting $N \to \infty$, we obtain  
\[
\mu\left( \left\{ x : T^*(\nu_1,\nu_2)(x) > \alpha \right\} \right)
\leq \frac{A}{\sqrt\alpha} \|\nu_1\| \cdot \|\nu_2\| 
\]
and this concludes the proof. 
\end{proof}

\section{The Extension to Positive Operators}\label{Sawyer}
Sawyer~\cite{Sawyer1966} removed the restriction $p\le 2$ in Stein's theorem \cite{Stein1961} under the  assumption 
of positivity on the linear operators. Theorem~\ref{thm2}  below extends  Sawyer's result   to positive bilinear operators     which commute  with an ergodic family of measure-preserving transformations; precisely, we prove that for such operators a.e. finiteness implies a weak-type estimate   from  $L^p\times L^q$ to weak $L^r$ in the   range $1\le p,q<\infty$ where $1/r=1/p+1/q$. The proof uses a delicate inductive construction, analogous to that in ~\cite{Sawyer1966}, and relates  to Stein's probabilistic argument described in 
the proof of Theorem~\ref{thm1}.

Let $(X,\mathcal{B},\mu)$  be a finite measure space normalized so that  $\mu(X)=1$.

\begin{definition}[continuous-in-measure]
Let
\[
\mathscr B_m : L^p(X)\times L^q(X) \to \{\text{measurable functions on } X\}
\]
be a sequence of bilinear operators.  
We say that each $\mathscr B_m$ is \emph{continuous-in-measure} if the following holds:
whenever
\[
f_k  \to f \quad \text{in } L^p(X),
\qquad
g_k  \to g \quad \text{in } L^q(X),
\]
we have, for every fixed $m$,
\[
\mathscr B_m(f_k,g_k)(x)\longrightarrow \mathscr B_m(f,g)(x)
\quad\text{in measure.}
\]
Equivalently, for every $\varepsilon>0$,
\[
\mu\Bigl(\, x:\,
\bigl|\mathscr B_m(f_k,g_k)(x)-\mathscr B_m(f,g)(x)\bigr|>\varepsilon
\,\Bigr)
\longrightarrow 0,\quad \text{as } k \to \infty.
\]

\end{definition}

\begin{definition}[Positive operator]
A    bilinear operator $\mathscr B$ defined on $L^p(X)\times L^q(X)$ and taking values in the set of 
measurable functions of another measure space 
is called \emph{positive} if
$f\ge 0$ and $h\ge 0$ a.e. imply
$\mathscr B(f,h)\ge 0$ a.e.. 
Precisely, for any $f$ in $L^p(X)$ and $h$ in $L^q(X)$ there is a 
set of measure zero $\mathcal E^{\mathscr B(f,h)}$ such that  $\mathscr B(f,h)(x)\ge 0$ for  
$x\in X\setminus \mathcal E^{\mathscr B(f,h)}$.
\end{definition}

\begin{lemma}\label{abslo}
Let $\mathscr B$ be a positive bilinear operator, meaning that
\[
\mathscr B(f,h)(x)\ge 0
\quad\text{for almost every }x\in X
\quad\text{whenever } f,h\ge 0.
\]

Then the following properties hold.

\medskip

\noindent\textbf{\textup{(i) Monotonicity.}}
If $f_1,f_2\in L^p(X)$ and $h_1,h_2\in L^q(X)$ satisfy
\[
0\le f_1 \le f_2,
\qquad
0\le h_1 \le h_2
\quad\text{a.e. on }X,
\]
then
\[
\mathscr B(f_1,h_1)(x)
\;\le\;
\mathscr B(f_2,h_2)(x)
\quad\text{for almost every }x\in X,
\]
more precisely, for all
\[
x\in X \setminus
\Big(
\mathcal{E}^{\mathscr B(f_2-f_1,h_1)}
\cup
\mathcal{E}^{\mathscr B(f_1,h_2-h_1)}
\cup
\mathcal{E}^{\mathscr B(f_2-f_1,h_2-h_1)}
\Big).
\]

\medskip

\noindent\textbf{\textup{(ii) Domination by absolute values.}}
For all $f\in L^p(X)$ and $h\in L^q(X)$,
\[
|\mathscr B(f,h)(x)|
\;\le\;
\mathscr B(|f|,|h|)(x)
\quad\text{for almost every }x\in X,
\]
more precisely, for all
\[
x\in X \setminus
\bigcup_{\varepsilon,\delta\in\{+,-\}}
\mathcal{E}^{\mathscr B(f^\varepsilon,h^\delta)},
\]
where $f = f^+ - f^-$ and $h = h^+ - h^-$  and $f^+,f^-,h^+,h^-\ge 0$.
\end{lemma}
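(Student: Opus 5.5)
The plan is to derive both properties directly from bilinearity combined with positivity, applied to suitable nonnegative pairs. Throughout, for nonnegative inputs $u\in L^p(X)$, $v\in L^q(X)$, the positivity hypothesis supplies the exceptional null set $\mathcal E^{\mathscr B(u,v)}$. Every identity used below is an identity of measurable functions that holds pointwise once the representatives are fixed. It is therefore enough to work off the finite union of the relevant exceptional sets, which is still a null set.

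For (i), I would write $f_2=f_1+(f_2-f_1)$ and $h_2=h_1+(h_2-h_1)$, where all four functions $f_1$, $f_2-f_1$, $h_1$, $h_2-h_1$ are nonnegative a.e. Expanding by bilinearity gives
\[
\mathscr B(f_2,h_2)=\mathscr B(f_1,h_1)+\mathscr B(f_2-f_1,h_1)+\mathscr B(f_1,h_2-h_1)+\mathscr B(f_2-f_1,h_2-h_1).
\]
Each of the last three terms is $\ge 0$ off its exceptional set $\mathcal E^{\cdot}$. Hence $\mathscr B(f_1,h_1)(x)\le \mathscr B(f_2,h_2)(x)$ for every $x$ outside the union of the three listed sets, which is exactly the stated exceptional set. (Note that $\mathscr B(f_1,h_2-h_1)$ is the third term; it appears in the statement in that form.)

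For (ii), I would use $f=f^+-f^-$, $h=h^+-h^-$ and $|f|=f^++f^-$, $|h|=h^++h^-$. Bilinearity gives
\[
\mathscr B(f,h)=P-N,\qquad \mathscr B(|f|,|h|)=P+N,
\]
where $P=\mathscr B(f^+,h^+)+\mathscr B(f^-,h^-)$ and $N=\mathscr B(f^+,h^-)+\mathscr B(f^-,h^+)$. Off the union of the four sets $\mathcal E^{\mathscr B(f^\varepsilon,h^\delta)}$ we have $P,N\ge 0$. The real-valued inequality $|P-N|\le P+N$ then gives the claim. If the operator is complex-valued, the same argument works as long as one only splits real-valued $f,h$; for complex $f,h$, one would additionally need a countable family of rotations $\mathrm{Re}(e^{i\theta}\cdot)$ with $\theta$ rational. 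I would state the lemma in the real setting, which is the one the statement implicitly uses.

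There is no serious obstacle here. The only point needing care is the null-set bookkeeping: bilinear identities such as $\mathscr B(f_2,h_2)=\sum$ must hold pointwise, not merely a.e. If $\mathscr B$ is only defined up to a.e. equality, each expansion introduces a further null set, and these would simply be added to the exceptional set. The stated sets correspond to taking a fixed representative map that is genuinely bilinear.
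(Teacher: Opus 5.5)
Your proof is correct and follows the paper's argument essentially verbatim. In (i) you expand $\mathscr B(f_1+u,h_1+v)$ by bilinearity with $u=f_2-f_1$, $v=h_2-h_1$ and use positivity of the three cross terms; in (ii) you set $P=\mathscr B(f^+,h^+)+\mathscr B(f^-,h^-)$ and $N=\mathscr B(f^+,h^-)+\mathscr B(f^-,h^+)$ and use $|P-N|\le P+N=\mathscr B(|f|,|h|)$, exactly as the paper does (its $A$ and $B$), and your restriction to real-valued functions with explicit null-set bookkeeping matches how the paper implicitly treats the lemma.
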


\begin{proof}

\noindent\textbf{Proof of (i).}
Since $f_1 \le f_2$ and $h_1 \le h_2$, write
\[
f_2 = f_1 + u,
\qquad
h_2 = h_1 + v,
\qquad
u,v \ge 0.
\]
By bilinearity we have 
\[
\mathscr B(f_2,h_2)
=
\mathscr B(f_1+u,h_1+v)
=
\mathscr B(f_1,h_1)
+
\mathscr B(u,h_1)
+
\mathscr B(f_1,v)
+
\mathscr B(u,v).
\]
By positivity, each of the additional terms is non-negative a.e..
Therefore,
\[
\mathscr B(f_2,h_2)(x)
\;\ge\;
\mathscr B(f_1,h_1)(x)
\quad
\text{for all }
x \notin
\mathcal{E}^{\mathscr B(f_2-f_1,h_1)}
\cup
\mathcal{E}^{\mathscr B(f_1,h_2-h_1)}
\cup
\mathcal{E}^{\mathscr B(f_2-f_1,h_2-h_1)}.
\]
This proves (i).

\medskip

\noindent\textbf{Proof of (ii).}
Write the sign decompositions
\[
f = f^+ - f^-,
\qquad
h = h^+ - h^-.
\]
By bilinearity,
\[
\mathscr B(f,h)
=
\mathscr B(f^+,h^+)
-
\mathscr B(f^+,h^-)
-
\mathscr B(f^-,h^+)
+
\mathscr B(f^-,h^-).
\]
Set
\[
A := \mathscr B(f^+,h^+) + \mathscr B(f^-,h^-),
\qquad
B := \mathscr B(f^+,h^-) + \mathscr B(f^-,h^+).
\]
By positivity,
\[
A(x)\ge 0,
\qquad
B(x)\ge 0,
\]
for all
\[
x \notin
\bigcup_{\varepsilon,\delta\in\{+,-\}}
\mathcal{E}^{\mathscr B(f^\varepsilon,h^\delta)}.
\]
Then
\[
\mathscr B(f,h) = A - B,
\qquad
|\mathscr B(f,h)| \le A + B.
\]
Finally,
\begin{align*}
A + B
&=
\mathscr B(f^+,h^+)
+ \mathscr B(f^+,h^-)
+ \mathscr B(f^-,h^+)
+ \mathscr B(f^-,h^-) \\
&=
\mathscr B(f^+ + f^-,\, h^+ + h^-) \\
&=
\mathscr B(|f|,|h|),
\end{align*}
since $|f| = f^+ + f^-$ and $|h| = h^+ + h^-$.

Thus
\[
|\mathscr B(f,h)(x)| \le \mathscr B(|f|,|h|)(x)
\]
outside the stated exceptional set. This completes the proof.
\end{proof}

\begin{definition}[Measure-preserving transformation]
A measurable map $\tau:X \to X$ is called \emph{measure-preserving} if
\[
\mu\bigl(\tau^{-1}[A]\bigr)=\mu(A)\qquad\text{for all }A\in\mathcal B.
\]
Equivalently, $\tau $ is  measure-preserving if for every integrable $h:X \to\mathbb C$ one has 
\[
\int_X h\circ \tau \,d\mu =\int_X h \,d\mu .
\]
\end{definition}

\begin{definition}[Ergodic family]
{\rm Let  \(\mathcal{T}\) be a collection of measure-preserving 
transformations on  a measure space \((X,\mathcal{B},\mu)\).
We say that \(\mathcal{T}\) is an \emph{ergodic family} on \(X\) 
if whenever a measurable set \(A\in\mathcal{B}\) satisfies
\[
   \tau^{-1}[A] = A
   \qquad\text{for every } \tau\in\mathcal{T},
\]
then 
\[
   \mu(A)=0 \qquad \text{or} \qquad \mu(A)=1.
\]}
\end{definition}

\begin{definition}[Bilinear distributive sequence]\label{Def8888}
{\rm Let
\[
\mathscr B_m : L^p(X)\times L^q(X) \to \{\text{measurable functions on } X\},
\qquad m\ge1,
\]
be a sequence of bilinear operators.  Following Sawyer~\cite{Sawyer1966},  we say that $\{\mathscr B_m\}_m$ is \emph{distributive on $X$} if there exists
an ergodic family $\mathcal T$ of measure-preserving transformations on $X$
such that, for every $\tau\in\mathcal T$ and every $f\in L^p(X)$, $h\in L^q(X)$,
the   relationship holds  for each $m$
\[
\mathscr B_m(f,h)\circ\tau( x)
\;=\;
\mathscr B_m(f\circ\tau,\; h\circ\tau)(x)
\qquad\text{for almost all } x\in X. 
\]
}
\end{definition}

One can show that if $\{\mathscr B_m\}_m$ is  distributive, then 
the maximal operator  
\[
\mathscr B^{*}(f,h) 
  := \sup_{m\ge1}\, \bigl|\mathscr B_m(f,h) \bigr|.
\]
is also distributive, i.e., it satisfies
\[
\mathscr B^*(f,h)\circ\tau( x)
\;=\;
\mathscr B^*(f\circ\tau,\; h\circ\tau)(x)
\qquad\text{for almost all }  x\in X. 
\]
 Precisely, the preceding holds for $X\setminus\mathcal C^{\mathscr B^{*}(f,h)\circ\tau}$, where $\mathcal C^{\mathscr B^{*}(f,h)\circ\tau}$ has measure zero.

After introducing all the necessary definitions, we are now ready to state the theorem.
\begin{theorem}\label{thm2}
Let $1 \le p,q \leq \infty$ and $\frac{1}{2} \leq r \leq \infty$, such that $\frac{1}{r} = \frac{1}{p} + \frac{1}{q}$. Let $\mathscr B_m$ be  a distributive sequence with the ergodic family  $\mathcal{T}$ of continuous-in-measure
positive bilinear transformations of $ L^p(X) \times L^q(X)$  to measurable functions on
$X$. Suppose that  for every $f \in L^p(X)$ and $h \in L^q(X)$ we have
\begin{equation}\label{kkdjkjd}
\mathscr B^*(f, h)(x) <\infty
\end{equation}
  for almost every $x \in X$.

Then, there exists a constant $C > 0$ such that for all $f \in L^p(X)$, $h \in L^q(X)$, and all $\alpha > 0$,
\begin{equation}\label{pqr44}
\mu\left(\left\{ x \in X : \mathscr B^*(f, h)(x) > \alpha \right\} \right) \leq \frac{C}{\alpha^r} \|f\|_{L^p(X)}^r \|h\|_{L^q(X)}^r.
\end{equation}
 In particular, if $p=q=\infty$, there exists $C_\infty$ such that
  \[
     \|\mathscr B^*(f,h)\|_{L^\infty(X)}
     \le C_\infty\,\|f\|_{L^\infty(X)}\|h\|_{L^\infty(X)}.
  \]
\end{theorem}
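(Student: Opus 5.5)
Suppose \eqref{pqr44} fails. We will build a single pair $(F,H)\in L^p(X)\times L^q(X)$ with $\mathscr B^*(F,H)=\infty$ on a set of positive measure, which contradicts \eqref{kkdjkjd}. The plan follows Sawyer's scheme, with positivity taking over the role played by the Rademacher functions in the proof of Theorem~\ref{thm1}.

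\emph{Reduction to nonnegative data with summable norms.} By Lemma~\ref{abslo}(ii), $\mathscr B^*(f,h)\le \mathscr B^*(|f|,|h|)$ a.e., so we may take $f,h\ge0$. If the estimate fails, then for each $n$ there are $f_n,h_n\ge0$ with
\[
E_n=\{\mathscr B^*(f_n,h_n)>1\},\qquad \mu(E_n)\ge n\|f_n\|_p^r\|h_n\|_q^r .
\]
Lemma~\ref{lm21} then gives a subsequence with $\sum_k\|f_{n_k}\|_p^r\|h_{n_k}\|_q^r<\infty$ and $\sum_k\mu(E_{n_k})=\infty$. We rebalance as in the proof of Theorem~\ref{thm1}, so that $\|f_n\|_p^p=\|h_n\|_q^q=\|f_n\|_p^r\|h_n\|_q^r$. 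Lemma~\ref{lm31} then supplies multipliers $R_n\to\infty$ with $\sum\|R_nf_n\|_p^p<\infty$ and $\sum\|R_nh_n\|_q^q<\infty$. The case $p$ or $q=\infty$ needs a separate normalization: there one uses $\sup$-norms and takes $\mu(E_n)$ bounded below rather than summing. If necessary we also truncate so that $\mu(E_n)\le1/2$, by splitting a large $E_n$ into pieces.

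\emph{Covering by transformations (the main obstacle).} The goal is to find $\tau_n\in\mathcal T$, or finite compositions of members of $\mathcal T$, such that $\limsup_n \tau_n^{-1}[E_n]$ has positive measure. There is no group structure here, so Lemma~\ref{lm24} does not apply directly. Instead I would prove Sawyer's ergodic covering lemma:

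\begin{quote}
If $\mu(A)<1$ and $\mu(E)>0$, then there is a finite composition $\sigma$ of elements of $\mathcal T$ with $\mu(\sigma^{-1}[E]\setminus A)\ge c\,\mu(E)(1-\mu(A))$.
\end{quote}

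The argument goes by contradiction through ergodicity. Let $\widetilde E$ be the union of $\sigma^{-1}[E]$ over the semigroup generated by $\mathcal T$. This set is $\mathcal T$-invariant up to null sets, so $\mu(\widetilde E)=1$, and hence some $\sigma^{-1}[E]$ meets $A^\complement$ substantially. Applying the lemma inductively, block by block, to the $E_n$ with a divergent sum, we obtain transformations $\tau_n$ such that, for infinitely many blocks, the sets $\tau_n^{-1}[E_n]$ cover a fixed fraction of $X$ within the block. A Borel–Cantelli-type limsup over the blocks then gives a set $X_0$ of positive measure, ideally of full measure after one more ergodicity argument, on which $x\in\tau_n^{-1}[E_n]$ for infinitely many $n$. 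I expect this measure-theoretic induction to be the delicate step.

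\emph{Positive series instead of Rademacher series.} Set $F=\sum_n R_n\,f_n\circ\tau_n$ and $H=\sum_n R_n\,h_n\circ\tau_n$. These converge in $L^p$ and $L^q$ because $\tau_n$ preserves measure and the norms satisfy $\sum\|R_nf_n\|_p^p<\infty$ (for $p\ge1$ use the triangle inequality after passing to a further subsequence so that $\sum\|R_nf_n\|_p<\infty$). Since all terms are nonnegative, Lemma~\ref{abslo}(i) gives, for each $n$ and $m$,
\[
\mathscr B_m(F,H)\ge R_n^2\,\mathscr B_m(f_n\circ\tau_n,h_n\circ\tau_n)=R_n^2\,\mathscr B_m(f_n,h_n)\circ\tau_n \quad\text{a.e.}
\]
Here distributivity is applied to each factor of the composition. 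Continuity in measure is used only to make sense of $\mathscr B_m$ on the infinite sums, via partial sums and a.e.-convergent subsequences. Taking the supremum over $m$ yields $\mathscr B^*(F,H)(x)>R_n^2$ whenever $\tau_n(x)\in E_n$. On $X_0$ this happens for infinitely many $n$, so $\mathscr B^*(F,H)=\infty$ on $X_0$, which contradicts \eqref{kkdjkjd}.

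Finally, for $p=q=\infty$ the same construction produces a contradiction with the $L^\infty$ bound. Choose $\|f_n\|_\infty,\|h_n\|_\infty\le 2^{-n}$ and $\mu(E_n)$ bounded below, with $\mathscr B^*(f_n,h_n)>4^{n}n$ on $E_n$. The series for $F$ and $H$ then converge in $L^\infty$, and $\mathscr B^*(F,H)=\infty$ on a set of positive measure.
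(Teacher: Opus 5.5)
Your architecture matches the paper's: nonnegative data with $\sum_n\|R_nf_n\|_{L^p}^p<\infty$, transformations that make $\tau_n^{-1}[E_n]$ recur, a dominating pair $(F,H)$, then monotonicity and distributivity. Two steps fail as written, and the first is the heart of the matter.

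\textbf{The covering lemma.} Your argument is that the union $\widetilde E$ of all $\sigma^{-1}[E]$ is invariant, hence of full measure, hence some $\sigma^{-1}[E]$ meets $A^\complement$. This gives at best $\mu(\sigma^{-1}[E]\setminus A)>0$ for some $\sigma$, and even that needs care, since the semigroup may be uncountable and $\widetilde E$ need not be measurable. It gives no bound $c\,\mu(E)(1-\mu(A))$ with $c$ uniform. Without one, your block induction collapses: the successive gains $\mu(\tau_n^{-1}[E_n]\setminus U)$ may be summable even though $\sum_n\mu(E_n)=\infty$. The quantitative statement is exactly what the paper imports as Sawyer's Lemma~\ref{mixing}, namely $\mu(B\cap\tau^{-1}[A])\le\theta\,\mu(A)\mu(B)$. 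Applied with $A=A_N^\complement$ and $B=\bigcap_{k<N}\tau_k^{-1}[A_k^\complement]$, it yields the product bound of Lemma~\ref{lm242}, and Lemma~\ref{ln} finishes. To derive such a bound from ergodicity you need a mean-ergodic argument, not a union argument. In $L^2(X)$, the element of minimal norm in the closed convex hull of $\{\mathbf 1_E\circ\sigma\}$ ($\sigma$ in the semigroup generated by $\mathcal T$) is fixed by every isometry $g\mapsto g\circ\tau$, $\tau\in\mathcal T$, so it equals the constant $\mu(E)$. Pairing a nearby convex combination with $\mathbf 1_B$ then produces $\sigma$ with $\mu(\sigma^{-1}[E]\cap B)\ge(1-\varepsilon)\mu(E)\mu(B)$, which is your lemma with any $c<1$. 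Your use of finite compositions, with distributivity iterated, is the right setting: for a single element of $\mathcal T$ such a bound can fail (take $\mathcal T=\{\tau\}$ and $B=\tau^{-1}[A]$).

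\textbf{The series for $F$ and $H$.} For $F=\sum_nR_n\,f_n\circ\tau_n$, convergence in $L^p$ by the triangle inequality needs $\sum_n\|R_nf_n\|_{L^p}<\infty$. For $p>1$ you cannot get this by passing to a subsequence while keeping $\sum_n\mu(E_n)=\infty$. After rebalancing, the only link is $\mu(E_n)\gtrsim n\|f_n\|_{L^p}^p$, which allows $\mu(E_n)\le\|f_n\|_{L^p}$ for all large $n$; then every subsequence with $\sum_k\mu(E_{n_k})=\infty$ has $\sum_k\|f_{n_k}\|_{L^p}=\infty$. Without Rademacher signs there is no Khintchine-type gain that turns $\ell^p$-summability of the norms into convergence. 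The fix is the paper's. You only use $F\ge R_nf_n\circ\tau_n$ together with Lemma~\ref{abslo}(i), so take $F=\bigl(\sum_n(R_nf_n\circ\tau_n)^p\bigr)^{1/p}$ (or $\sup_nR_nf_n\circ\tau_n$), and similarly for $H$. Tonelli gives $\|F\|_{L^p}^p\le\sum_n\|R_nf_n\|_{L^p}^p$, and continuity in measure is then not needed at all. The same issue affects your $p=q=\infty$ paragraph. Failure of the $L^\infty$ bound does not give $\mu(E_n)$ bounded below, so you must repeat pairs to make $\sum_n\mu(E_n)$ diverge. After that, only a supremum of the translated functions, not their sum, stays bounded in $L^\infty$.
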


To prove Theorem~\ref{thm2} we need the following two lemmas.  

\begin{lemma}[Sawyer~\cite{Sawyer1966}]\label{mixing}
Let $\mathcal T$ be an ergodic family of measure-preserving
transformations on a probability space $(X,\mathcal{B},\mu)$.
Then for any two measurable sets $A,B\in\mathcal{B}$, and for any constant $\theta>1$, 
there exists a transformation $\tau\in\mathcal{T}$ such that
\[
   \mu\!\bigl( B \cap \tau^{-1}[A]\bigr)
   \;\le\;
   \theta\, \mu(A)\, \mu(B).
\]
\end{lemma}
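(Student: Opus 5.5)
Lemma~\ref{mixing} is a statement about sets only, so the operators play no role. I will argue by contradiction with the mean ergodic theorem; the ergodicity of the family, rather than of a single map, is handled by passing to a countable subfamily and averaging.

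Suppose that for some sets $A,B$ and some $\theta>1$ every $\tau\in\mathcal T$ satisfies $\mu(B\cap\tau^{-1}[A])>\theta\mu(A)\mu(B)$. In particular $\mu(A)\mu(B)>0$. Rewriting the hypothesis in inner-product form gives
\[
\int_X \mathbf 1_B\,(\mathbf 1_A\circ\tau)\,d\mu>\theta\mu(A)\mu(B)\quad\text{for all }\tau\in\mathcal T.
\]
The same strict inequality then holds for every finite convex combination of such $\tau$, and it persists under iterated compositions. For instance, for $\tau_1,\tau_2\in\mathcal T$ the function $\mathbf 1_A\circ\tau_1\circ\tau_2$ is handled by applying the hypothesis to the set $\tau_1^{-1}[A]$ in place of $A$, since $\mu(\tau_1^{-1}[A])=\mu(A)$.

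The cleaner route works with the Koopman operators $U_\tau h=h\circ\tau$, which are isometries of $L^2(X)$. Let $S$ be the semigroup they generate. Consider the closed convex hull $K$ of $\{U\mathbf 1_A: U\in S\}$ in $L^2$. This set is weakly compact, so it contains a unique element $h_0$ of minimal norm.

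The first goal is to show that $h_0$ is invariant under every $U_\tau$. Since $U_\tau$ is a contraction and maps $K$ into $K$, we get $\|U_\tau h_0\|\le\|h_0\|$ with $U_\tau h_0\in K$, and minimality gives equality. This is the standard Alaoglu--Birkhoff argument. The step I expect to be the main obstacle is exactly here: turning the norm equality into the identity $U_\tau h_0=h_0$ requires strict convexity of $L^2$ applied to the midpoint $\tfrac12(h_0+U_\tau h_0)\in K$. That midpoint has norm at most $\|h_0\|$, so by uniqueness of the minimizer it equals $h_0$, which forces $U_\tau h_0=h_0$.

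Next I show that an invariant $L^2$ function is a.e. constant. Each level set $\{h_0>c\}$ satisfies $\tau^{-1}[\{h_0>c\}]=\{h_0>c\}$ up to null sets. I would like to say that ergodicity then forces measure $0$ or $1$. However, the definition of ergodic family requires exact invariance, not invariance modulo null sets, so here I will assume, as is customary, that the two notions agree. If one wants to be careful, one can replace a level set by its almost-invariant modification, for example $\bigcap_{U\in S'}U^{-1}$ of it over a countable $S'$. Granting this, $h_0$ is constant, and since every element of $K$ has integral $\mu(A)$, we get $h_0\equiv\mu(A)$.

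To reach the contradiction, note that the functional $h\mapsto\int \mathbf 1_B h\,d\mu$ is continuous on $L^2$ and is $\ge\theta\mu(A)\mu(B)$ on the generating set of $K$, hence on all of $K$. I should double-check that this bound holds for the full semigroup and not only for $\mathcal T$ itself; this follows by iterating the set replacement from the first paragraph. Evaluating the functional at $h_0$ gives $\mu(A)\mu(B)\ge\theta\mu(A)\mu(B)$. Because $\mu(A)\mu(B)>0$ and $\theta>1$, this is impossible.
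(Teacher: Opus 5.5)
The gap is where you transfer the lower bound from $\{\mathbf 1_A\circ\tau:\tau\in\mathcal T\}$ to the whole semigroup orbit $\{U\mathbf 1_A:U\in S\}$. Your contradiction hypothesis concerns one fixed triple $(A,B,\theta)$. It gives $\mu(B\cap\tau^{-1}[A])>\theta\mu(A)\mu(B)$ for $\tau\in\mathcal T$, and nothing when $A$ is replaced by $\tau_1^{-1}[A]$. So $\int_X\mathbf 1_B\,(\mathbf 1_A\circ\tau_1\circ\tau_2)\,d\mu$ is not controlled. You do need the full semigroup orbit to get $U_\tau K\subseteq K$, and hence invariance of $h_0$, so the final evaluation at $h_0$ is unsupported.

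This cannot be patched, because for an ergodic family not closed under composition the statement is false. On $X=[0,1)$ with Lebesgue measure, let $\mathcal T=\{\tau\}$ with $\tau(x)=x+\alpha \pmod 1$ and $\alpha\in(0,\tfrac18)$ irrational; this is an ergodic family in the paper's sense. Take $A=B=[0,\tfrac14)$ and $\theta=2$. Then $\tau^{-1}[A]=[0,\tfrac14-\alpha)\cup[1-\alpha,1)$, so $\mu(B\cap\tau^{-1}[A])=\tfrac14-\alpha>\tfrac18=\theta\mu(A)\mu(B)$. Far iterates $\tau^k$ would work, and that is exactly what your semigroup argument detects. The paper gives no proof of this lemma, only a citation to Sawyer, and the statement as written omits the needed closure hypothesis.

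What your argument does prove correctly is the version for families closed under composition. An example is $\mathcal T=\{\tau^k:k\in\mathbb Z\}$ from Lemma~\ref{APP2}, which is all Section~\ref{APP} uses. Then $\{U\mathbf 1_A:U\in S\}\subseteq\{\mathbf 1_A\circ\sigma:\sigma\in\mathcal T\}$, so the hypothesis covers every generator of $K$. The Alaoglu--Birkhoff minimizer together with $\int_X h_0\,d\mu=\mu(A)$ then gives the contradiction. Invariance of $h_0$ follows directly from uniqueness, since $U_\tau h_0\in K$ has minimal norm.

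Your second caveat is likewise a genuine hypothesis, not a convention: with the paper's exact-invariance definition the lemma fails even for groups. The group of all bijections of $[0,1)$ moving only countably many points has no nontrivial exactly invariant sets (use transpositions). Yet every $U_\tau$ is the identity on $L^2$, so $A=B=[0,\tfrac12)$ with $\theta<2$ is a counterexample. For a countable group $G$, the set $\bigcap_{g\in G}g^{-1}[E]$ is exactly invariant and a.e.\ equal to $E$, so the cyclic case is fine. In general you must assume ergodicity modulo null sets, or countability of $\mathcal T$.
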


\begin{lemma}\label{lm242}
Assume $\mathcal{T}$ is an ergodic family. Then, if $\{A_n\}$ is a sequence
of measurable subsets of $X$ such that
\[
\sum_{n=1}^\infty \mu(A_n) = \infty,
\]
there exists a sequence of transformations $\{\tau_n\} \subseteq \mathcal{T}$
such that
\begin{equation}\label{eq:22}
\mu\Big(\bigcup_{n \ge M} \tau_n^{-1}[A_n]\Big) = 1,
\qquad \text{for all } M.
\end{equation}
That is, $\tau_n(x) \in A_n$ infinitely often for almost every $x \in X$.
\end{lemma}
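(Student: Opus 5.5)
The plan is to choose the transformations $\tau_n$ one at a time, greedily, using Lemma~\ref{mixing} at each step. The goal is that the sets $\tau_n^{-1}[A_n]$ behave like independent events, so that the complement of their union past any index $M$ has measure tending to zero, exactly as in the Borel--Cantelli argument in the proof of Lemma~\ref{lm24}.

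\textbf{Reduction.} It suffices to prove \eqref{eq:22} for each fixed $M$, and even to show $\mu\big(\bigcup_{n\ge M}\tau_n^{-1}[A_n]\big)=1$ for one sequence serving all $M$ simultaneously. If some $\mu(A_n)=1$ for infinitely many $n$, the claim is trivial, since $\tau_n^{-1}[A_n]$ has full measure for any $\tau_n$. So we may assume $\mu(A_n)<1$ for all large $n$.

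\textbf{Inductive step.} Fix a sequence $\theta_n>1$ with $\prod_n\theta_n<\infty$, for instance $\theta_n=1+2^{-n}$. Having chosen $\tau_1,\dots,\tau_{n-1}$, set
\[
B_{n-1}=X\setminus\bigcup_{j<n}\tau_j^{-1}[A_j].
\]
Apply Lemma~\ref{mixing} to the sets $A=X\setminus A_n$ and $B=B_{n-1}$ with constant $\theta_n$. This gives $\tau_n\in\mathcal T$ with
\[
\mu\bigl(B_{n-1}\cap\tau_n^{-1}[X\setminus A_n]\bigr)\le\theta_n\,(1-\mu(A_n))\,\mu(B_{n-1}).
\]
Because $\tau_n$ is measure preserving, $\tau_n^{-1}[X\setminus A_n]=X\setminus\tau_n^{-1}[A_n]$, and so the left-hand side equals $\mu(B_n)$.

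\textbf{Iteration and conclusion.} Iterating gives
\[
\mu(B_N)\le\Big(\prod_{n}\theta_n\Big)\prod_{n=1}^N(1-\mu(A_n)).
\]
The product tends to $0$ by Lemma~\ref{ln} if $0<\mu(A_n)<1$. Zero-measure terms contribute factor $1$ and can be dropped, and they do not affect divergence of the sum. This only handles $M=1$. For general $M$, run the same induction but restart the tracking set at each level: define $B^{(M)}_{N}=X\setminus\bigcup_{M\le j\le N}\tau_j^{-1}[A_j]$. The difficulty is that a single choice of $\tau_n$ must be good for all $B^{(M)}_{n-1}$ with $M\le n$ at once, and Lemma~\ref{mixing} handles only one set $B$.

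\textbf{Main obstacle.} The obstacle is making one choice of $\tau_n$ serve all the sets $B^{(M)}_{n-1}$. I would resolve it by a block construction. Partition $\mathbb N$ into consecutive blocks $[m_k,m_{k+1})$ with $\sum_{n\in\text{block }k}\mu(A_n)\ge k$, which is possible because the series diverges. Within block $k$, run the induction from scratch, starting with $B=X$, with $\theta_n$ chosen so that the block product is at most $2$. By Lemma~\ref{ln} and the bound $\prod(1-t_n)\le e^{-\sum t_n}$, the uncovered set of block $k$ then has measure at most $2e^{-k}$. Given $M$, every block $k$ with $m_k\ge M$ lies inside $[M,\infty)$, so
\[
\mu\Big(X\setminus\bigcup_{n\ge M}\tau_n^{-1}[A_n]\Big)\le\inf_{k:\,m_k\ge M}2e^{-k}=0,
\]
which proves \eqref{eq:22} for every $M$.
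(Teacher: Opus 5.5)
Your argument is correct. Its engine is the same as the paper's: a greedy induction using Lemma~\ref{mixing} with $B=\bigcap_{j<n}\tau_j^{-1}[A_j^\complement]$ and $A=A_n^\complement$, followed by the product estimate.

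Where you differ is in how the index $M$ is handled, and there your version is the more complete one. The paper fixes $M$ \emph{before} choosing $\{\tau_n\}$ and then asserts a bound for $\mu\bigl(\bigcap_{k=M}^N\tau_k^{-1}[A_k^\complement]\bigr)$. The greedy choice only controls intersections that start at the index where the induction begins; that is what \eqref{eqs1} records, and only for $\bigcap_{k=1}^N$. So either the sequence depends on $M$, or the tail bound is not justified. An $M$-dependent sequence does not give ``$\tau_n(x)\in A_n$ infinitely often''. That conclusion is what the proof of Theorem~\ref{thm2} needs in order to build a single pair $F,H$.

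Your block construction supplies exactly this missing ingredient. You restart the induction with $B=X$ on consecutive blocks satisfying $\sum_{\text{block }k}\mu(A_n)\ge k$. This yields one sequence for all $M$. The uncovered part of block $k$ has measure at most $2e^{-k}$. This is even summable, so by Borel--Cantelli almost every $x$ is hit in all but finitely many blocks.

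Two small remarks:
\begin{itemize}
\item $\tau_n^{-1}[X\setminus A_n]=X\setminus\tau_n^{-1}[A_n]$ is a set-theoretic identity, not a consequence of measure preservation.
\item In the block step you need neither the reduction for $\mu(A_n)=1$ nor Lemma~\ref{ln}. The inequality $1-t\le e^{-t}$ for all $t\in[0,1]$ gives $\prod_{\text{block }k}(1-\mu(A_n))\le e^{-k}$ directly.
\end{itemize}
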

\begin{proof}
Fix $M$. Let $\{A_n\} \subseteq \mathcal{B}$ be any sequence of sets. Then, by Lemma~\ref{mixing}
and induction, we can choose transformations $\{\tau_n\} \subseteq \mathcal{T}$
such that for all $N>M$,
\begin{equation}\label{eqs1}
\mu\left(\tau_1^{-1}\big[A_1^\complement\big] \cap \tau_2^{-1}\big[A_2^\complement\big] \cap \cdots \cap \tau_N^{-1}\big[A_N^\complement\big]\right)
\le \theta_1 \theta_2 \cdots \theta_N \prod_{k=1}^N \mu(A_k^\complement),
\end{equation}
where $\{\theta_k\}$ is any sequence of constants with $\theta_k > 1$ and $\prod_{k=1}^\infty\theta_k<\infty$.
As discussed in Lemma~\ref{lm24}, since $\sum_{n=1}^\infty \mu(A_n) = \infty$ we must have $\sum_{n=M}^\infty \mu(A_n) = \infty$, thus by Lemma~\ref{ln}, we have
\begin{equation}\label{eqs0}
\prod_{k=M}^\infty \mu\big(A_k^\complement\big) = 0.
\end{equation}
Choose a sequence $\{\theta_k\}_{k\ge1}$ with $\theta_k>1$ such that
\[
\prod_{k=1}^\infty \theta_k < \infty .
\]
Let $\Theta := \prod_{k=1}^\infty \theta_k$, combining  with \eqref{eqs1}, we have
\[
\mu\Big( \bigcap_{k=M}^N \tau_k^{-1}\big[A_k^\complement\big]  \Big)
\le
\Big( \prod_{k=M}^N \theta_k \Big)
\prod_{k=M}^N \mu\big(A_k^\complement\big)
\le
\Theta \prod_{k=M}^N \mu\big(A_k^\complement\big).
\]
Letting $N \to \infty$ and using \eqref{eqs0}, we obtain
\[
\mu\Big( \bigcap_{k=M}^\infty \tau_k^{-1}\big[A_k^\complement\big]  \Big)=0.
\]
By De Morgan's law,
\[
\left(\bigcap_{k=M}^\infty \tau_k^{-1}\big[A_k^\complement\big]
\right)^\complement
= \bigcup_{k=M}^\infty \Big( \tau_k^{-1}\big(A_k^\complement\big) \Big)^\complement .
\]

For each $k$, since preimages commute with complements,
\[
\Big( \tau_k^{-1}\big(A_k^\complement\big) \Big)^\complement
= \tau_k^{-1}\big( (A_k^\complement)^\complement \big)
= \tau_k^{-1}(A_k).
\]
Therefore,
\[
E^\complement = \bigcup_{k=M}^\infty \tau_k^{-1}(A_k).
\]
Since $\mu(X)=1$, we have
\[
m\Big( \bigcup_{k=M}^\infty A_k^{w_k} \Big)=1.
\]
\end{proof}

\begin{proof}[Proof of Theorem~\ref{thm2}]
Using similar argument as in the proof of  Theorem~\ref{thm1}, we can find two sequences of functions
 $\{f_n\}_{n\in\mathbb N} \in L^p(X)$, $\{h_n\}_{n\in\mathbb N} \in L^q(X)$, and a sequence of real numbers $R_n \to \infty$ such that 
\[
\sum_n\mu( \{ x\in X : \mathscr B^*(f_n, h_n)(x) > 1 \}) =\infty,
\]
with $\sum_n \|R_n f_n\|_{L^p}^p<\infty$, and $\sum_n \|R_n h_n\|_{L^q}^q  <\infty$.
Thus we can take $f''_n(x)=R_nf_n(x)$ and $h''_n(x)=R_nh_n(x)$ and then we have  
\[
\sum_{j=1}^\infty
\mu\bigl(\{x\in X :\mathscr B^*(f''_n, h''_n)(x) > R_n^2\}\bigr)
\;=\;\infty,
\]
with 
$$
\sum_n \|f''_n\|_{L^p}^p<\infty, \qquad \textup{and}\qquad \sum_n \|h''_n\|_{L^q}^q  <\infty .
$$
For simplicity of notation, we replace $f''_n$ with $f_n$, $h''_n$ with $h_n$.

Since each $\mathscr B_m$ is positive, by Lemma~\ref{abslo} we have, for any $f\in L^p(X)$, $h\in L^q(X)$
\[
\mathscr B_m (f,h)(x) \;\le\; \mathscr B_m\big(|f|,|h|\big)(x)
 \quad\text{for  every }x\in X\setminus \mathcal H_m^{f,h},
\]
where $\mathcal H_m^{f,h}$ is a null subset of $X$. Hence, for $x\in X\setminus \bigcup_m \mathcal H_m^{f,h}$, we have
\[
\mathscr B^*(f,h)(x)
=
\sup_m |\mathscr B_m (f,h)(x)|
\le
\sup_m  \mathscr B_m\big(|f|,|h|\big)(x) 
= 
\mathscr B^*(|f|,|h|)(x).
\]
Therefore, replacing   $f_n$ by $|f_n|$, and $h_n$ by $|h_n|$ we can
assume, without loss of generality, that $f_n ,h_n \ge 0  $ for all $n=1,2,\dots$ 
and the level sets
\[
A_n := \{ x\in X : \mathscr B^*(f_n, h_n)(x) > R_n^2 \}
\]
still satisfy
\begin{equation}\label{eq:Aj-sum}
\sum_{n=1}^\infty \mu(A_n) = \infty,\quad\sum_n \|f_n\|_{L^p}^p<\infty,\quad \sum_n \|h_n\|_{L^q}^q  <\infty.
\end{equation}

Now apply Lemma~\ref{lm242} to the sequence of sets $\{A_n\}$
in \eqref{eq:Aj-sum}. Since $\sum_n \mu (A_n)=\infty$, Lemma~\ref{lm242} ensures
that there exists a sequence $\{\tau_n\}\subset \mathcal{T}$ and a null subset $\mathcal N_0$ of $X$ such that
for   every $x\in X\setminus \mathcal N_0$ we have
\[
\tau_n(x) \in A_n\quad\text{for infinitely many }n.
\]
Equivalently, for   every $x\in X\setminus \mathcal N_0$
\[
\mathscr B^*(f_n,h_n)\circ\tau_n(x) > R_n^2
\quad\text{for infinitely many } n  ,
\]
and hence
\begin{equation}\label{eq:sup-infty-on-orbit}
\sup_{n\ge1} \mathscr B^*(f_n,h_n)\circ\tau_n(x)= \infty
\quad\text{for all   }x\in X\setminus \mathcal N_0.
\end{equation}

Notice that since $\tau_n$ is measure-preserving, the $L^p$ norms
  are preserved, that is
  \[
  \|f_n\circ\tau_n\|_{L^p}^p=
  \|f_n\|_{L^p}^p,\quad\|h_n\circ\tau_n\|_{L^q}^q=
  \|h_n\|_{L^q}^q.
  \]

Moreover, by the assumption of commutation relation, 
we have for each $n$ and $\forall  f \in L^p(X)$, $\forall h \in L^q(X)$,
\[
\mathscr B^*(f\circ\tau_n, h\circ\tau_n)(x) \ge  \mathscr B^*(f, h)\circ\tau_n(x), \quad \quad \forall x\in X\setminus C^{\mathscr B^{*}(f,h)\circ\tau_n}. 
\]
Combining this with \eqref{eq:sup-infty-on-orbit}, we obtain
\begin{equation}\label{eq:sup-Tstar-gj}
\sup_{n\ge1} \mathscr B^*(f\circ\tau_n, h\circ\tau_n)(x) 
\;=\;
\infty
\quad\text{for all   }x\in X\setminus \left(\mathcal N_0\cup \bigcup_{n=1}^\infty C^{\mathscr B^{*}(f,h)\circ\tau_n} \right).
\end{equation}

Summarizing, we have constructed  sequences $\{f_n\}\subset L^p(X)$, $\{h_n\}\subset L^q(X)$
such that

\begin{enumerate}
  \item $f_n\circ\tau_n(x) \ge 0$, $h_n\circ\tau_n(x) \ge 0$   for every $n$;
  \item \(\displaystyle
         \sum_{n=1}^\infty \|f_n\circ\tau_n\|_{L^p}^p < \infty,\quad  \sum_{n=1}^\infty \|h_n\circ\tau_n\|_{L^q}^q < \infty;
        \)
  \item \(\displaystyle
         \sup_{n\ge1} \mathscr B^*(f_n\circ\tau_n,h_n\circ\tau_n)(x) = \infty
        \quad\text{for all   }x\in X\setminus \left(\mathcal N_0\cup \bigcup_{n=1}^\infty C^{\mathscr B^{*}(f,h)\circ\tau_n} \right).
        \)
\end{enumerate}

Now define functions on $X$ by setting
\[
F(x)
:=
\Biggl(\sum_{n=1}^\infty f_n\circ\tau_n(x)^p\Biggr)^{1/p},
\]
\[
H(x)
:=
\Biggl(\sum_{n=1}^\infty h_n\circ\tau_n(x)^q\Biggr)^{1/q}.
\]
By Tonelli's theorem, the function $F$ lies in
$L^p(X)$ as
\[
\|F\|_{L^p}^p
=
\int_X \sum_{n=1}^\infty \big(f_n\circ\tau_n(x)\big)^p\,d\mu(x)
=
\sum_{n=1}^\infty \|f_n\circ\tau_n\|_{L^p}^p
< \infty.
\]
Likewise, the function $H$ lies in
$L^q(X)$.

Next, for each $n$ and each $x$, since $f_n\circ\tau_n,h_n\circ\tau_n\ge 0$,  we have
\[
\mathscr B_n(f_n\circ\tau_n,h_n\circ\tau_n)(x)\ge 0 \quad\textup{for all } x \in X\setminus \mathcal N_n, 
\]
where $\mathcal N_n$ is a set of measure zero. 
Using the Lemma~\ref{abslo}, we obtain 
\[
\mathscr B^*(F,H)(x)
=
\sup_n |\mathscr B_n(F,H)(x)|
\;\ge\;
\sup_n \mathscr B_n (f_n\circ\tau_n,h_n\circ\tau_n)(x)
\;\ge\;
\mathscr B^*(f_n\circ\tau_n,h_n\circ\tau_n)(x)
\]
for $x$ in $X\setminus \mathcal M_n$, where $\mathcal M_n$ is a null subset of $X$. 
Thus, by taking $\mathcal C:=\mathcal N_0\cup \bigcup_{n=1}^\infty C^{\mathscr B^{*}(f,h)\circ\tau_n}$,  and $\mathcal M:=\bigcup_{n=1}^\infty \mathcal M_n$, we have
\[
\mathscr B^*(F,H)(x) \;\ge\; \mathscr B^*(f_n\circ\tau_n,h_n\circ\tau_n)(x)
\quad\text{for all $n$ and all $x\in X\setminus (\mathcal C\cup \mathcal M)$}
\]
so that
\[
\mathscr B^*(F,H)(x)
\;\ge\;
\sup_{n\ge1} \mathscr B^*(f_n\circ\tau_n,h_n\circ\tau_n)(x)
=
\infty
\quad\text{for almost every }x\in X,
\]
by \eqref{eq:sup-Tstar-gj}. This contradicts the hypothesis that
$\mathscr B^*(F,H)(x)<\infty$ a.e. for   every  $F\in L^p(X)$ and $H\in L^q(X)$.

\medskip

Therefore our assumption that $\mathscr B^*$ is not of weak-type $(p,q,r)$
must have been false. Thus $\mathscr B^*$ must satisfy the   weak-type $(p,q,r)$ condition \eqref{pqr44},  and this 
completes the proof.
\end{proof}

\section{An application concerning maximal bilinear averages}\label{APP}

Let $(X,\mathcal{B},\mu)$ be a finite measure space and let
$\tau:X \to X$ be a measure--preserving transformation.  
Then the quadruple $(X,\mathcal{B},\mu,\tau)$ forms a
measure--preserving dynamical system.
In this section we study the family of bilinear averages
\[
\mathscr B_m(f,h)(x)
    :=\frac{f(\tau^m x)\, h(\tau^{2m}x)}{|m|+1},
    \qquad m\in\mathbb Z,
\]
defined for $(f,h)\in L^p(X)\times L^q(X)$,
and the associated maximal operator
\[
\mathscr B^*(f,h)(x)
   := \sup_{m\in\mathbb Z}\left|\mathscr B_m(f,h)(x)\right|, \qquad x\in X.
\]
This operator is commonly referred to as the
\emph{bilinear tail maximal operator} and was introduced and studied by
Assani and Buczolich~\cite{Assani2010}. We first recall the following result.

\begin{theorem}[Assani--Buczolich {\cite{Assani2010}}]\label{1p1qle2}
Let $(X,\mathcal{B},\mu,\tau)$ be a measure--preserving dynamical system on a
probability space $X$.  If $p,q\ge1$ satisfy
\[
\frac1p+\frac1q<2,
\]
then the maximal operator
\[
\mathscr B^*(f,h)(x)
    = \sup_{m\in\mathbb Z}
      \left|\frac{f(\tau^m x)\, h(\tau^{2m}x)}{|m|+1}\right|
\]
maps $L^p(X)\times L^q(X)$ into $L^r(X)$ for every exponent
\[
0<r<\frac12.
\]
\end{theorem}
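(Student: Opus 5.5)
The plan is to prove the estimate by direct distributional bounds on $\mathscr B^*$, built from a \emph{linear} tail maximal operator. We do not use Theorems~\ref{thm1} or~\ref{thm2}, which go in the opposite direction (from a.e.\ convergence to bounds). Two reductions come first. Since only $|f|$ and $|h|$ enter, we may take $f,h\ge 0$. Since $\tau$ is measure preserving, the terms with $m<0$ are handled exactly like those with $m>0$ (the relevant sets have the same measure), so it suffices to treat $m\ge 0$. Then we write $u=f^p\in L^1$ and $v=h^q\in L^1$. The basic tool is the one-function tail maximal operator
\[
N_a g(x)=\sup_{m\ge 0}\frac{|g(\tau^m x)|}{(m+1)^a},\qquad a>0.
\]
Since $\tau$ and $\tau^2$ are measure preserving, a union bound over $m$ gives the layer-cake estimate
\[
\mu(N_a g>\lambda)\le \sum_{m\ge0}\mu\bigl(|g|^{1/a}>\lambda^{1/a}(m+1)\bigr)\le \lambda^{-1/a}\|g\|_{L^{1/a}}^{1/a}.
\]
Hence $N_a$ maps $L^{1/a}$ into $L^{1/a,\infty}$, and the same bound holds with $\tau$ replaced by $\tau^2$.

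\textbf{Easy range $\frac1p+\frac1q\le 1$.} Pick $a\ge 1/p$ and $b\ge 1/q$ with $a+b=1$. The pointwise bound
\[
\mathscr B^*(f,h)\le N_a f\cdot N_b^{(\tau^2)}h
\]
holds, and on a probability space $L^p\subset L^{1/a}$ and $L^q\subset L^{1/b}$. Thus $\mathscr B^*(f,h)$ is a product of a function in weak $L^{1/a}$ and a function in weak $L^{1/b}$. By H\"older for Lorentz spaces the product lies in $L^{1,\infty}$, which on a probability space embeds into $L^r$ for every $r<1$, in particular for every $r<\tfrac12$. Equivalently, $\mu(\mathscr B^*>\lambda)\le \mu(N_af>\lambda^{a})+\mu(N_bh>\lambda^{b})$, which is integrable against $r\lambda^{r-1}\,d\lambda$.

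\textbf{Hard range $1<\frac1p+\frac1q<2$.} Here no admissible splitting $a+b=1$ exists, and this is the main obstacle. The gain must come from the facts that $r<\tfrac12$ and that $f$ and $h$ are sampled along the two different orbits $\tau^m x$ and $\tau^{2m}x$. The plan is to decompose dyadically in $m$ and in the sizes of $u$ and $v$. Write
\[
u=\sum_i u\,\mathbf 1_{\{2^i\le u<2^{i+1}\}},\qquad v=\sum_j v\,\mathbf 1_{\{2^j\le v<2^{j+1}\}},
\]
and for $m\in[2^k,2^{k+1})$ estimate the measure of the set where $f(\tau^m x)h(\tau^{2m}x)>\lambda 2^k$. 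Only pairs $(i,j)$ with $2^{i/p+j/q}\gtrsim \lambda 2^k$ contribute. For each such pair we bound the contribution by
\[
\min\Bigl(2^k\,\mu(u\sim 2^i),\ 2^k\,\mu(v\sim 2^j),\ 1\Bigr),
\]
where we keep the trivial cap $1$ and, crucially, the better of the two union bounds rather than their sum. Summing over $k$, $i$, $j$ should then give
\[
\mu(\mathscr B^*>\lambda)\lesssim \min\bigl(1,\ \lambda^{-\rho}\,\Phi(\|u\|_1,\|v\|_1)\bigr)
\]
for some $\rho>r$ and some function $\Phi$ of the norms (a loss of logarithmic type is acceptable here). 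The condition $\tfrac1p+\tfrac1q<2$ should be exactly what makes the geometric sums converge for some $\rho\in(r,\tfrac12]$.

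This double-dyadic counting is the step I expect to fail or to need refinement. Should it fail, I would fall back on the Assani--Buczolich strategy of controlling the simultaneous-largeness sets $\{x:\ u(\tau^m x)\ \text{and}\ v(\tau^{2m}x)\ \text{both large}\}$ across scales. Once the distribution bound is in hand, the conclusion is routine:
\[
\|\mathscr B^*\|_{L^r}^r=r\int_0^\infty \lambda^{r-1}\mu(\mathscr B^*>\lambda)\,d\lambda<\infty,
\]
since the integrand is bounded by $\lambda^{r-1}$ near $0$ and by $\lambda^{r-1-\rho}$ at infinity. Bilinear scaling then upgrades finiteness to the bound $\lesssim \|f\|_{L^p}\|h\|_{L^q}$.
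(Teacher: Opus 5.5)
\paragraph{A genuine gap in the range $1<\frac1p+\frac1q<2$.} The paper gives no proof of this statement; it is quoted from \cite{Assani2010}. So your argument has to stand on its own, and it does not. Your reductions and the range $\frac1p+\frac1q\le 1$ are correct. There the splitting $\mathscr B^*(f,h)\le N_af\cdot N_b^{(\tau^2)}h$ even gives $L^{1,\infty}$, but that is the elementary part. For $1<\frac1p+\frac1q<2$ you offer only a counting scheme that you expect may fail, plus a pointer to ``the Assani--Buczolich strategy'', which is a citation rather than a proof. The counting does fail, and not by a logarithm. Put $s=\frac1p+\frac1q>1$. Take $\mu(A)=\mu(B)=2^{-N}$ and $f=2^{N/p}\mathbf 1_A$, $h=2^{N/q}\mathbf 1_B$, so $\|f\|_{L^p}=\|h\|_{L^q}=1$ and only $(i,j)=(N,N)$ occurs. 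For every $\lambda\le 2^{N(s-1)}$ the block $k=N$ is admissible ($2^{Ns}\ge\lambda 2^N$) and contributes $\min(2^N2^{-N},2^N2^{-N},1)=1$. So your bound on $\mu(\mathscr B^*>\lambda)$ is trivial on $(0,2^{N(s-1)}]$, and the $L^r$ bound it yields is at least $2^{N(s-1)r}\to\infty$, although the norms stay equal to $1$. Hence no estimate of the form $\min(1,\lambda^{-\rho}\Phi(\|u\|_1,\|v\|_1))$ can come out. Now superpose blocks: let $u=\sum_N 2^N\mathbf 1_{A_N}$ with disjoint $A_N$, $\mu(A_N)=2^{-N}N^{-2}$, and define $v$ likewise. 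The triples $(k,N,N)$ with $k=N+\lceil 2\log_2 N\rceil$ are admissible for all large $N$, and each contributes $1$. So the right-hand side is $+\infty$ for every $\lambda$: the scheme does not even give a.e.\ finiteness. The loss is a power, and the hypothesis $s<2$ never enters.

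\paragraph{Why the scheme fails.} The reason is structural. For a single $m$, the bound $\mu(\tau^{-m}A\cap\tau^{-2m}B)\le\min(\mu(A),\mu(B))$ is sharp only when the two orbit samples are strongly correlated (e.g.\ $\tau=\mathrm{id}$, $A=B$). Multiplying it by the $2^k$ values of $m$ in a block is sharp only when the sets for different $m$ are nearly disjoint. Your estimate charges both worst cases at once. Take $\tau=\mathrm{id}$, where $\mathscr B^*(f,h)=fh\in L^{1/s}$ is harmless. Even if each set is measured exactly, summing over $m$ gives $\sum_{m\ge0}\mu(fh>\lambda(m+1))\ge\lambda^{-1}\|fh\|_{L^1}-1$. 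This is infinite for suitable $f\in L^p$, $h\in L^q$ as soon as $s>1$. The missing ingredient is control of the union over $m$ of the sets where $f(\tau^m x)h(\tau^{2m}x)$ is large. That control must exploit that the same $m$ drives both factors, and how these sets overlap under the dynamics. This genuinely bilinear input is what the cited work \cite{Assani2010} has to provide. Without it, your proposal proves the statement only for $\frac1p+\frac1q\le 1$.
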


Assuming that $\tau$ is ergodic, we are able to obtain a
substantially stronger conclusion.
More precisely, we show that $\mathscr B^*$ satisfies a strong--type estimate
at the critical exponent $r=(\frac1p+\frac1q)^{-1}$.
This extends the integrability range of $\mathscr B^*(f,h)$
beyond that obtained in \cite{Assani2010}.

The ergodicity hypothesis is mild in this context, yet essential:
it allows the family $\{\mathscr B_m\}$ to be placed within
Sawyer’s abstract framework for maximal operators generated by
ergodic families of transformations, while all other structural
features of the bilinear averages remain unchanged.

We now state our main results for this application.
\begin{theorem}\label{App1}
Let $(X,\mathcal{B},\mu,\tau)$ be a measure--preserving dynamical system on a
probability space $X$, and suppose that $\tau$ is ergodic.
Let $1< p,q\le\infty$ and $\frac12< r\le\infty$ satisfy
$\frac1r=\frac1p+\frac1q$.
Then there exists a constant $C>0$ such that for all
$f\in L^p(X)$, $h\in L^q(X)$ we have 
\begin{equation}\label{2324}
\big\| \mathscr B^*(f,h)\big\|_{L^r(X)} C
      \|f\|_{L^p(X)} \,\|h\|_{L^q(X)} .
\end{equation}
\end{theorem}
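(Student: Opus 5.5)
The plan is to derive Theorem~\ref{App1} from Theorem~\ref{thm2}, using the finiteness result Theorem~\ref{1p1qle2} of Assani and Buczolich as the a.e.\ convergence input, and then to upgrade the resulting weak-type bounds to strong type by multilinear interpolation. Throughout I assume $\tau$ is invertible, as is implicit in the definition of $\mathscr B_m$ for $m\in\mathbb Z$. I also re-index $\mathbb Z$ as a sequence $m=1,2,\dots$, which changes neither $\mathscr B^*$ nor the hypotheses.

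\textbf{Step 1: check the hypotheses of Theorem~\ref{thm2}}, taking the ergodic family to be $\mathcal T=\{\tau\}$.
\begin{itemize}
\item \emph{Positivity.} If $f,h\ge 0$ then clearly $\mathscr B_m(f,h)\ge 0$.
\item \emph{Distributivity.} For each $m$,
\[
\mathscr B_m(f,h)(\tau x)=\frac{f(\tau^{m}(\tau x))\,h(\tau^{2m}(\tau x))}{|m|+1}=\mathscr B_m(f\circ\tau,h\circ\tau)(x),
\]
since powers of $\tau$ commute.
\item \emph{Continuity in measure.} Composition with the measure-preserving maps $\tau^m,\tau^{2m}$ is an isometry of $L^p$ and of $L^q$. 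Hölder's inequality then gives
\[
\|\mathscr B_m(f_k,h_k)-\mathscr B_m(f,h)\|_{L^r}\lesssim \|f_k-f\|_{L^p}\|h_k\|_{L^q}+\|f\|_{L^p}\|h_k-h\|_{L^q},
\]
so $L^r$ convergence, and hence convergence in measure, follows.
\item \emph{A.e.\ finiteness.} Whenever $1\le p,q\le\infty$ with $\frac1p+\frac1q<2$, Theorem~\ref{1p1qle2} gives $\mathscr B^*(f,h)\in L^{s}$ for some $s>0$, so \eqref{kkdjkjd} holds. For the endpoints $p=\infty$ or $q=\infty$, finiteness follows from the same theorem by embedding $L^\infty\subset L^{p'}$ for a finite $p'$.
\end{itemize}
Theorem~\ref{thm2} then yields the weak-type estimate \eqref{pqr44} for every pair $(p,q)$ with $1\le p,q\le\infty$ and $\frac1p+\frac1q<2$, with constants depending on $(p,q)$.

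\textbf{Step 2: interpolate.} Fix a target $(1/p,1/q)$ with $1<p,q\le\infty$, which lies in the region $\frac1p+\frac1q<2$. I choose three non-collinear points $(1/p_j,1/q_j)$, $j=0,1,2$, in the closed square $[0,1]^2$ minus the corner $(1,1)$. These are chosen so that the target lies in the interior of their triangle in the $(1/p,1/q)$-plane, which is possible when $p,q<\infty$. At each vertex Step 1 gives a weak-type bound
\[
L^{p_j}\times L^{q_j}\to L^{r_j,\infty},\qquad \frac1{r_j}=\frac1{p_j}+\frac1{q_j}.
\]
The map $(f,h)\mapsto\mathscr B^*(f,h)$ is a supremum of bilinear operators. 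By positivity I may restrict to $f,h\ge 0$, and then it is monotone and sub-bilinear. I will then apply the multilinear Marcinkiewicz interpolation theorem (in the Janson / Grafakos--Kalton form valid for quasi-Banach targets and sublinear maximal operators, first linearizing the supremum by a measurable choice of $m(x)$ if needed). Since the exponents satisfy the Hölder relation along the triangle, this gives the strong-type bound \eqref{2324} at the interior point.

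\textbf{Step 3: the boundary cases.}
\begin{itemize}
\item If $q=\infty$, then $\mathscr B^*(f,h)\le\|h\|_\infty\, S f$, where $S f(x)=\sup_m |f(\tau^m x)|/(|m|+1)$. By Step 1 with $h\equiv 1$, $S$ is of weak type $(p_0,p_0)$ for all $p_0\ge 1$ and bounded on $L^\infty$. Linear Marcinkiewicz interpolation then gives $L^p$ boundedness for $p>1$.
\item The case $p=\infty$ is symmetric, with $S h(x)=\sup_m|h(\tau^{2m}x)|/(|m|+1)$.
\item If $p=q=\infty$, the bound is trivial.
\end{itemize}

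The main obstacle is Step 2. First, one must apply a multilinear interpolation theorem that tolerates targets $r_j<1$, so one has to make sure the chosen vertices avoid the excluded corner $(1,1)$ while still surrounding every target with $p,q>1$. Second, one must make sure the constants from Theorem~\ref{thm2} are genuine uniform constants at each vertex, which they are since that theorem yields a fixed $C$ for each pair of exponents.
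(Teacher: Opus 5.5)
Your proposal is correct and is essentially the paper's argument. You verify positivity, distributivity, continuity in measure and a.e.\ finiteness (from Assani--Buczolich) to invoke Theorem~\ref{thm2}, then upgrade the weak-type bounds to strong type by multilinear Marcinkiewicz interpolation; your separate treatment of the edges $p=\infty$ or $q=\infty$ via the linear operator $S$ (whose weak $(1,1)$ bound you can take directly from the maximal ergodic theorem) is extra care that the paper skips. One small fix: use $\mathcal T=\{\tau^k:k\in\mathbb Z\}$ as in Lemma~\ref{APP2} rather than $\{\tau\}$, because the mixing Lemma~\ref{mixing} behind Theorem~\ref{thm2} fails for a single map (take $B=\tau^{-1}[A]$), whereas your distributivity identity iterates to all powers of $\tau$ at no cost.
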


\begin{theorem}\label{App1_endpoints}
Let $(X,\mathcal{B},\mu,\tau)$ be a measure--preserving dynamical system on a
probability space $X$, and suppose that $\tau$ is ergodic. Fix $p,q>1$ and 
let $r$ satisfy $1/r=1/p+1/q$.
Then there exists a constant $C>0$ such that for all $\alpha>0$ we have 
\[
  \mu\Bigl(\bigl\{x\in X:\mathscr B^*(f,h)(x)>\alpha\bigr\}\Bigr)
  \le \frac{C}{\alpha^r}\,\|f\|_{L^1(X)}^r\,\|h\|_{L^q(X)}^r
\]
for all $f\in L^1(X)$ and $h\in L^q(X)$.  We also   have
\[
  \mu\Bigl(\bigl\{x\in X:\mathscr B^*(f,h)(x)>\alpha\bigr\}\Bigr)
  \le \frac{C}{\alpha^r}\,\|f\|_{L^p(X)}^r\,\|h\|_{L^1(X)}^r
\]
for all $f\in L^p(X)$ and $h\in L^1(X)$.

\end{theorem}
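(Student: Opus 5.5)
The plan is to apply Theorem~\ref{thm2} at the endpoint exponents $(1,q)$ and $(p,1)$. Its hypotheses are that the operators are positive, continuous in measure, distributive with respect to an ergodic family, and that $\mathscr B^*(f,h)<\infty$ a.e. for every $f\in L^1$, $h\in L^q$ (respectively $f\in L^p$, $h\in L^1$).

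\textbf{Structural hypotheses.} Take the ergodic family $\mathcal T=\{\tau\}$; since $\tau$ is ergodic, this is an ergodic family in the sense of the definition. Positivity of each $\mathscr B_m(f,h)=\frac{f\circ\tau^m\cdot h\circ\tau^{2m}}{|m|+1}$ is obvious. Distributivity holds because $\tau$ commutes with its own powers:
\[
\mathscr B_m(f,h)\circ\tau=\frac{(f\circ\tau)\circ\tau^m\,(h\circ\tau)\circ\tau^{2m}}{|m|+1}=\mathscr B_m(f\circ\tau,h\circ\tau).
\]
For $m<0$ we need $\tau$ invertible, or else we restrict to $m\ge0$ and treat negative $m$ on the natural extension. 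Continuity in measure comes from H\"older's inequality and measure preservation:
\[
\|\mathscr B_m(f_k,h_k)-\mathscr B_m(f,h)\|_{L^{r}}\le \|f_k-f\|_{L^1}\|h_k\|_{L^q}+\|f\|_{L^1}\|h_k-h\|_{L^q}.
\]
Here $1/r=1+1/q$, and $L^r$ convergence (a quasi-norm when $r<1$) implies convergence in measure.

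\textbf{A.e. finiteness.} This is the main obstacle. Assani--Buczolich (Theorem~\ref{1p1qle2}) requires $\frac1p+\frac1q<2$. At $(1,q)$ with $q>1$ we have $1+\frac1q<2$, so it applies and gives $\mathscr B^*(f,h)\in L^{s}$ for some $s\in(0,1/2)$. In particular $\mathscr B^*(f,h)<\infty$ a.e. for all $f\in L^1$, $h\in L^q$. The case $(p,1)$ is symmetric. If one prefers not to rely on that theorem, a direct argument works. Bound
\[
\sup_m \frac{|f(\tau^mx)h(\tau^{2m}x)|}{|m|+1}\le \sup_m\frac{|f(\tau^m x)|}{(|m|+1)^{a}}\cdot\sup_m\frac{|h(\tau^{2m}x)|}{(|m|+1)^{1-a}}
\]
with $a+ (1-a)=1$, and choose $a<1$, $1-a>1/q$ (possible since $q>1$). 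Then $\sum_m \mu(|f\circ\tau^m|>(|m|+1)^{a})$ and the analogous sum for $h$ are controlled by integrability of $|f|^{1/a}$ type quantities. That requires $f\in L^{1/a}$, which fails for $f\in L^1$ since $1/a>1$. So instead I would simply rely on Theorem~\ref{1p1qle2}, whose hypothesis holds at these endpoints.

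\textbf{Conclusion.} With all hypotheses verified, Theorem~\ref{thm2}, applied with exponents $(1,q,r)$ and then $(p,1,r)$, directly yields the two weak-type inequalities. The constant $C$ is the maximum of the two constants.
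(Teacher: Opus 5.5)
Your route is the paper's: you check positivity, continuity in measure, distributivity and a.e.\ finiteness, then invoke Theorem~\ref{thm2} at $(1,q)$ and $(p,1)$ with the natural exponents $1/r=1+1/q$ and $1/r=1/p+1$. These are the exponents the paper's proof actually yields. The $r$ fixed in the statement cannot be meant literally: the $m=0$ term with $f=\mathbf 1_E/\mu(E)$, $h=\mu(E)^{-1/q}\mathbf 1_E$ already violates it as $\mu(E)\to0$. For a.e.\ finiteness the paper cites Assani--Buczolich's weak-$L^{1/2}$ result for $L^p\times L^1$. You instead read it off Theorem~\ref{1p1qle2}, whose hypothesis $\frac1p+\frac1q<2$, as stated, does cover $p=1<q$, so either input suffices.

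The step that needs repair is the choice $\mathcal T=\{\tau\}$. It meets the paper's literal definition of an ergodic family, but Theorem~\ref{thm2} is proved through Lemma~\ref{mixing} and Lemma~\ref{lm242}, and both fail for a single transformation. In Lemma~\ref{mixing}, take $B=\tau^{-1}[A]$, $\mu(A)=\tfrac12$ and $1<\theta<2$; then there is no admissible choice. In Lemma~\ref{lm242} you would be forced to take $\tau_n=\tau$ for all $n$, so the lemma would assert $\mu\bigl(\bigcup_{n\ge M}A_n\bigr)=1$, which is false for $A_n\equiv A$ with $0<\mu(A)<1$. What the argument really uses is the mixing property of Lemma~\ref{mixing}, which holds for an ergodic family closed under composition but not for $\{\tau\}$. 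So take $\mathcal T=\{\tau^k\}$ as in Lemma~\ref{APP2}; $k\ge0$ already works, since the mean ergodic theorem gives $\frac1N\sum_{k<N}\mu\bigl(B\cap\tau^{-k}[A]\bigr)\to\mu(A)\mu(B)$, hence some $k$ satisfies the bound. Your identity $\mathscr B_m(f,h)\circ\tau=\mathscr B_m(f\circ\tau,h\circ\tau)$ iterates to every $\tau^k$, so the fix costs nothing and the rest of your argument then goes through.
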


We obtain our theorem as a consequence of   
Theorem~\ref{thm2}, but in order to apply the latter, we will need   
the following two lemmas.

\begin{lemma}\label{APP1}
For each $m\ge1$, the positive bilinear operator
$\mathscr B_m:L^p(X)\times L^q(X) \to\mathcal M(X)$
is continuous-in-measure in each variable.
\end{lemma}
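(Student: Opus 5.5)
The plan is to verify continuity in measure directly from the explicit formula
\[
\mathscr B_m(f,h)(x)=\frac{f(\tau^m x)\,h(\tau^{2m}x)}{|m|+1},
\]
using only that $\tau^m$ and $\tau^{2m}$ are measure preserving. Fix $m$ and suppose $f_k\to f$ in $L^p(X)$ and $h_k\to h$ in $L^q(X)$. Write the bilinear difference as
\[
f_k(\tau^m x)h_k(\tau^{2m}x)-f(\tau^m x)h(\tau^{2m}x)
=(f_k-f)(\tau^m x)\,h_k(\tau^{2m}x)+f(\tau^m x)\,(h_k-h)(\tau^{2m}x).
\]
It then suffices to show that each of the two terms tends to zero in measure. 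The factor $1/(|m|+1)$ is a harmless constant.

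The key observation is that composition with a measure-preserving map preserves distribution functions. Concretely, $\mu(\{x:|u(\tau^j x)|>\lambda\})=\mu(\tau^{-j}[\{|u|>\lambda\}])=\mu(\{|u|>\lambda\})$. Hence $(f_k-f)\circ\tau^m\to0$ in measure, since $L^p$ convergence implies convergence in measure by Chebyshev. Likewise $h_k\circ\tau^{2m}$ is bounded in measure: by Chebyshev, $\mu(\{|h_k\circ\tau^{2m}|>\Lambda\})\le \Lambda^{-q}\sup_k\|h_k\|_{L^q}^q$, and $\sup_k\|h_k\|_{L^q}<\infty$ because the sequence converges. The case $q=\infty$ is trivial.

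Next I use the elementary fact that a product $u_kv_k$ tends to zero in measure whenever $u_k\to0$ in measure and $\{v_k\}$ is tight in measure. Given $\varepsilon,\eta>0$, choose $\Lambda$ with $\mu(|v_k|>\Lambda)<\eta/2$ for all $k$. Then
\[
\mu(|u_kv_k|>\varepsilon)\le \mu(|v_k|>\Lambda)+\mu(|u_k|>\varepsilon/\Lambda)<\eta
\]
for all large $k$. The first term is handled this way. The second term is treated identically, with $u_k=(h_k-h)\circ\tau^{2m}$ and the fixed function $v=f\circ\tau^m$, which is finite a.e. and hence tight. Adding the two terms, using $\mu(|a+b|>\varepsilon)\le\mu(|a|>\varepsilon/2)+\mu(|b|>\varepsilon/2)$, gives the joint continuity in measure required in the definition. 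Continuity in each variable separately is the special case where one sequence is constant.

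I also note that $\mathscr B_m$ maps into measurable functions and is positive, since it is a product of nonnegative functions when $f,h\ge0$. I do not expect a serious obstacle. The only point needing care is the uniform tightness of $\{h_k\circ\tau^{2m}\}$, which is handled by the uniform $L^q$ bound. Note also that the argument never uses Hölder, so it works even when $r<1$, and it works for $p$ or $q$ equal to $1$ or $\infty$.
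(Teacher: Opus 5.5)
Your proof is correct, and it uses the same splitting as the paper, $(f_k-f)\circ\tau^m\cdot h_k\circ\tau^{2m}+f\circ\tau^m\cdot(h_k-h)\circ\tau^{2m}$, together with the fact that $\tau^m$ and $\tau^{2m}$ are measure preserving. The routes differ in the closing step.

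The paper applies H\"older's inequality to each piece, bounding them in $L^r$ by $\|f_k-f\|_{L^p}\|h_k\|_{L^q}$ and $\|f\|_{L^p}\|h_k-h\|_{L^q}$. This gives $\|\mathscr B_m(f_k,h_k)-\mathscr B_m(f,h)\|_{L^r}\to0$, and Chebyshev then converts this to convergence in measure. You instead stay entirely in the topology of convergence in measure. Your key lemma is that a sequence tending to zero in measure, multiplied by a uniformly tight sequence, tends to zero in measure.

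The paper's route is shorter and quantitative: it yields the stronger $L^r$ convergence with an explicit bound. Yours is qualitative but more robust. It only uses that the inputs converge in measure and are uniformly bounded in measure, so it does not rely on the relation $1/r=1/p+1/q$ at all.

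One correction to your closing remark: avoiding H\"older buys nothing for $r<1$. H\"older's inequality $\|uv\|_{L^r}\le\|u\|_{L^p}\|v\|_{L^q}$ remains valid for $0<r<1$ when $1/r=1/p+1/q$. The sum of the two pieces is handled by the quasi-triangle inequality, or by subadditivity of $\|\cdot\|_{L^r}^r$. So the paper's argument is equally valid when $r<1$.
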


\begin{proof}
Let $f_k\to f$ in $L^p(X)$ and $h_k\to h$ in $L^q(X)$.
Since $\tau$ is measure preserving, composition with $\tau^m$ and
$\tau^{2m}$ preserves $L^p$ and $L^q$ norms, hence
\[
f_k\circ\tau^m \to f\circ\tau^m \quad\text{in } L^p(X),
\qquad
h_k\circ\tau^{2m} \to h\circ\tau^{2m} \quad\text{in } L^q(X).
\]

Set $r>0$ by $1/r=1/p+1/q$.
We express the difference as follows: 
\begin{align*}
\mathscr B_m(f_k,h_k)-\mathscr B_m(f,h)
 =\frac{1}{|m|+1}\Bigl[
(f_k-f)\circ\tau^m \cdot h_k\circ\tau^{2m} 
+ f\circ\tau^m \cdot (h_k-h)\circ\tau^{2m}
\Bigr].
\end{align*}
By H\"older's inequality,
\[
\|(f_k-f)\circ\tau^m \cdot h_k\circ\tau^{2m}\|_{L^r}
\le \|f_k-f\|_{L^p}\,\|h_k\|_{L^q},
\]
and
\[
\|f\circ\tau^m \cdot (h_k-h)\circ\tau^{2m}\|_{L^r}
\le \|f\|_{L^p}\,\|h_k-h\|_{L^q}.
\]
Since $f_k\to f$ in $L^p(X)$ and $h_k\to h$ in $L^q(X)$, the right-hand
sides tend to zero as $k\to \infty$. Consequently,
\[
\|\mathscr B_m(f_k,h_k)-\mathscr B_m(f,h)\|_{L^r}\longrightarrow 0.
\]

Finally, Chebyshev's inequality yields, for every $\varepsilon>0$,
\[
\mu\Bigl(
\bigl|\mathscr B_m(f_k,h_k)-\mathscr B_m(f,h)\bigr|>\varepsilon
\Bigr)
\le
\varepsilon^{-r}
\|\mathscr B_m(f_k,h_k)-\mathscr B_m(f,h)\|_{L^r}^{\,r}
\longrightarrow 0,
\]
as $k\to \infty$, which is exactly convergence in measure.
\end{proof}

\begin{lemma}\label{APP2}
Let $\mathcal T=\{\tau^k:k\in\mathbb Z\}$.
If $\tau$ is ergodic, then $\mathcal T$ is an ergodic family of
measure--preserving transformations.
Moreover, the bilinear family $\{\mathscr B_m\}_{m\ge1}$ is distributive with
respect to $\mathcal T$, in the sense that
\[
\mathscr B_m(f,h)(\tau^k x)
=
\mathscr B_m(f\circ\tau^k,\; h\circ\tau^k)(x)
\]
for all integers $m,k$ and all $x\in X$.
\end{lemma}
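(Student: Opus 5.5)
The statement has two parts, and I would treat them separately: first that $\mathcal T=\{\tau^k:k\in\mathbb Z\}$ is an ergodic family, then the distributive identity. For the first part, I would check the definition of ergodic family directly. Each $\tau^k$ is measure-preserving because $\tau$ is: for $k\ge 0$ by iterating $\mu(\tau^{-1}[A])=\mu(A)$. For negative $k$ one needs $\tau$ invertible so that $\tau^k$ makes sense; I will assume this, as the index set $m\in\mathbb Z$ in the definition of $\mathscr B_m$ already presupposes it. Then $\mu(\tau^{k}[A])=\mu(\tau^{-1}[\tau^{k+1}[A]])=\dots$, and the same identity holds for $k<0$. Now let $A$ satisfy $(\tau^k)^{-1}[A]=A$ for every $k\in\mathbb Z$. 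Taking $k=1$ gives $\tau^{-1}[A]=A$, so $A$ is $\tau$-invariant, and ergodicity of $\tau$ yields $\mu(A)\in\{0,1\}$. Thus the invariance condition for the whole family is only stronger than invariance under $\tau$ alone, and the conclusion is immediate.

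For the distributive identity, I would compute both sides directly from the definition
\[
\mathscr B_m(f,h)(x)=\frac{f(\tau^m x)\,h(\tau^{2m}x)}{|m|+1}.
\]
The left side is $\mathscr B_m(f,h)(\tau^k x)=\frac{f(\tau^{m}\tau^{k}x)\,h(\tau^{2m}\tau^k x)}{|m|+1}$. The right side is $\mathscr B_m(f\circ\tau^k,h\circ\tau^k)(x)=\frac{f(\tau^k\tau^m x)\,h(\tau^k\tau^{2m}x)}{|m|+1}$. These agree because powers of a single map commute: $\tau^m\tau^k=\tau^{m+k}=\tau^k\tau^m$ for all integers $m,k$. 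This holds pointwise for every $x$ once representatives of $f,h$ are fixed, so no exceptional null set arises. It then follows that the a.e. version required in Definition~\ref{Def8888} holds trivially.

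The only delicate point is the meaning of negative powers, which requires invertibility of $\tau$ (or a restriction to $m,k\ge 0$). I would state this assumption explicitly. I would also note that, as a consequence, $\mathscr B^*$ is distributive as well, since taking the supremum over $m$ of the pointwise identity commutes with composition by $\tau^k$. That is the form needed when invoking Theorem~\ref{thm2}.
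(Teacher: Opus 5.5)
Your proposal is correct and follows the paper's argument. Ergodicity of $\mathcal T$ reduces to ergodicity of $\tau$, since invariance under every $\tau^k$ in particular gives $\tau^{-1}[A]=A$; the distributive identity is the same direct computation using $\tau^m\tau^k=\tau^{m+k}=\tau^k\tau^m$. You also check that each $\tau^k$ is measure-preserving and note that negative powers require $\tau$ to be invertible with measurable inverse. The paper leaves both points implicit, and they are worth keeping.
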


\begin{proof}
By definition, a family of measure--preserving transformations is ergodic
if the only measurable sets invariant under every transformation in the
family have measure $0$ or $1$.

Since invariance under all $\tau^k$ is equivalent to invariance under
$\tau$, the ergodicity of $\tau$ implies that $\mathcal T$ is ergodic.

The distributive identity follows from a direct computation:
\[
\mathscr B_m(f,h)(\tau^k x)
= \frac{f(\tau^{m+k}x)\,h(\tau^{2m+k}x)}{|m|+1}
= \mathscr B_m(f\circ\tau^k,\;h\circ\tau^k)(x).
\]
This completes the proof.
\end{proof}

 Having verified these lemmas,  we proceed with the proofs of    
Theorems~\ref{App1} and~\ref{App1_endpoints}. 

\begin{proof}
Lemmas~\ref{APP1} and~\ref{APP2} assert exactly the continuity and 
distributivity required by  
Theorem~\ref{thm2}. 
Next we need to know that $\mathscr B^*(f,h)$ is finite a.e. for $f,h$ in the given spaces. 
We obtain these assertions appealing to  Theorems 1  and 2   in 
 Assani and Buczolich~\cite{Assani2010}. These theorems claim that when $p,q>1$, 
 for $f\in L^p$ and $h\in L^q$ we have that  $\mathscr B^*(f,h)$ lies in $L^r$ for $r<1/2$ (\cite[Theorem 1]{Assani2010}), while if $p>1$ and $f\in L^p$ and $g\in L^1$,  then $\mathscr B^*(f,h)$ lies in $L^{1/2,\infty}$   (\cite[Theorem 2]{Assani2010}). In both cases we have $\mathscr B^*(f,h)<\infty$ a.e., 
 so hypothesis \eqref{kkdjkjd} of Theorem~\ref{thm2} is valid.

Applying Theorem~\ref{thm2} we obtain the weak type result:  for all $f\in L^p(X)$ and $h\in L^q(X)$ we have 
\[
  \mu\Bigl(\bigl\{x\in X:\mathscr B^*(f,h)(x)>\alpha\bigr\}\Bigr)
  \le \frac{C}{\alpha^r}\,\|f\|_{L^p(X)}^r\,\|h\|_{L^q(X)}^r
\]
when $1/p+1/q=1/r$ and $p,q\ge 1$. Note that if   $p=q=\infty$, then the above estimate is indeed a 
strong type estimate.    If both $p,q>1$, 
then   the weak--type estimate~\eqref{2324} can be upgraded to
a strong--type bound
\[
\|\mathscr B^*(f,h)\|_{L^r}
\le C\,\|f\|_{L^p}\,\|h\|_{L^q},
\]
by bilinear interpolation; see \cite{GLLZ}. Bilinear interpolation can be applied 
since we have an open convex 
region on which weak type bounds are valid. 
In the second  case where at least one but not both of $p,q$ is equal to $1$, 
Theorem~\ref{thm2} yields exactly the claimed weak--type bound in Theorem~\ref{App1_endpoints}.
\end{proof}

\section{An application concerning maximal bilinear Bochner--Riesz operators}\label{br}
The Bochner--Riesz means present natural ways to obtain the  summability and
pointwise convergence of Fourier series and Fourier integrals in higher dimensions. 
The study of bilinear Bochner--Riesz means
 have attracted  attention in recent years, beginning
with the  works \cite{BernicotSongYan2015,JeongLeeVargas2018}, where
$L^p\times L^q \to L^r$ boundedness is 
investigated.

In the bilinear setting, for $\alpha \ge 0$ and $\lambda>0$, the bilinear
Bochner--Riesz operator on $\mathbb R^n$ is defined by
\[
\mathcal B^\alpha_\lambda(f,g)(x)
:=
\iint_{\mathbb R^n\times\mathbb R^n}
\bigl(1- \lambda^{-2}(|\xi|^2+|\eta|^2)\bigr)_+^{\alpha}\,
\widehat f(\xi)\,\widehat g(\eta)\,
e^{2\pi i x\cdot(\xi+\eta)}\,d\xi\,d\eta.
\]

Motivated by questions of pointwise convergence and almost everywhere control,
maximal variants of bilinear Bochner--Riesz operators were subsequently
introduced and studied in
\cite{He2016,JeongLee2020,JotsaroopShrivastava2022}, where necessary and sufficient
conditions for boundedness were obtained in various regimes of indices. In particular,
He~\cite[Proposition 4.1]{He2016} established necessary conditions for the boundedness of the
maximal bilinear Bochner--Riesz operator, highlighting fundamental obstructions
that do not appear in the linear theory.

We now turn to the periodic setting. Let $f$ and $g$ be trigonometric polynomials
on $\mathbb T^n$ with Fourier coefficients $\widehat f(k)$ and
$\widehat g(\ell)$. For $\alpha \ge 0$ and $m \in \mathbb N$, we define the
bilinear Bochner--Riesz means by
\[
\mathcal B^\alpha_m(f,g)(x)
=
\sum_{k,\ell\in\mathbb Z^n}
\Bigl(1-\frac{|k|^2+|\ell|^2}{m^2}\Bigr)_+^{\alpha}
\widehat f(k)\widehat g(\ell)
e^{2\pi i (k+\ell)\cdot x}.
\]

The associated maximal bilinear Bochner--Riesz operator is then given by
\[
\mathcal B^\alpha_*(f,g)(x)
:=
\sup_{m\in\mathbb N}\bigl|\mathcal B^\alpha_m(f,g)(x)\bigr|, \qquad x\in \mathbb T^n.
\]

Although $L^p\times L^q\to L^r$ boundedness for $\mathcal B^\alpha_*$ is known 
for certain  indices $p,q$, we are interested here on the range of indices where 
boundedness fails. A necessary condition for the boundedness of the bilinear 
maximal bilinear Bochner--Riesz operator is given below.
 
\begin{proposition}[He~\cite{He2016}]
\label{prop:necessary_bilinear_BR}
A necessary condition for the bilinear maximal Bochner--Riesz operator
$\mathcal B_*^\alpha$ to be bounded from
$L^{p}(\mathbb{R}^n) \times L^{q}(\mathbb{R}^n)$
to weak $L^{r}(\mathbb{R}^n)$
is that
\[
\alpha \ge \frac{2n-1}{2r} - \frac{2n-1}{2}.
\]
\end{proposition}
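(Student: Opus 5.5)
We will establish the necessary condition by testing the weak-type inequality on a concrete pair of inputs. We take $f=g=\varphi$ with $\widehat\varphi$ a smooth bump supported near the origin and equal to $1$ on a ball of radius $2$. Then $\widehat f(\xi)\widehat g(\eta)=1$ on the support of the multiplier $(1-\lambda^{-2}(|\xi|^2+|\eta|^2))_+^\alpha$ for all $\lambda\le 1$. Hence, for such $\lambda$,
\[
\mathcal B^\alpha_\lambda(f,g)(x)=K^\alpha_\lambda(x,x),
\]
where $K^\alpha_\lambda(x,y)$ is the inverse Fourier transform in $\mathbb R^{2n}$ of $(1-\lambda^{-2}|(\xi,\eta)|^2)_+^\alpha$. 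By dilation, $K^\alpha_\lambda(x,x)=\lambda^{2n}K^\alpha_1(\lambda x,\lambda x)$.

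The key step is the classical asymptotic for the Bochner--Riesz kernel in dimension $N=2n$:
\[
K^\alpha_1(z)=c\,|z|^{-N/2-\alpha-1/2}\cos\bigl(2\pi|z|-\theta_{N,\alpha}\bigr)+O\bigl(|z|^{-N/2-\alpha-3/2}\bigr),
\]
which comes from the Bessel function representation $K^\alpha_1(z)=c_\alpha |z|^{-N/2-\alpha}J_{N/2+\alpha}(2\pi|z|)$. On the diagonal $|(x,x)|=\sqrt2|x|$, so
\[
|\mathcal B^\alpha_\lambda(f,g)(x)|\approx \lambda^{2n}(\lambda|x|)^{-n-\alpha-1/2}\,\bigl|\cos(2\pi\sqrt2\lambda|x|-\theta)\bigr|.
\]
For $|x|\ge 10$, we choose $\lambda\in[1/2,1]$, which stays within the allowed range and avoids the zeros of the cosine. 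This gives the lower bound
\[
\mathcal B^\alpha_*(f,g)(x)\gtrsim |x|^{-n-\alpha-1/2}\quad\text{for } |x|\ge 10 .
\]

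Next we check which weak-type exponent this lower bound permits. A function of size $|x|^{-n-\alpha-1/2}$ belongs to $L^{r,\infty}(\mathbb R^n)$ exactly when $r(n+\alpha+\tfrac12)\ge n$. Since $f,g$ are Schwartz, they lie in every $L^p$ and $L^q$, so boundedness into $L^{r,\infty}$ forces $\alpha\ge n/r-n-\tfrac12$. This diagonal-test condition has the same shape as the claimed bound, but with the slope $n$ in place of $\tfrac{2n-1}{2}$. Because $r\le 1$ in the bilinear range, for $r<1$ it is a different, and weaker, condition. To reach the sharper slope $(2n-1)/2$, we will instead use a Knapp/Kakeya-type focusing example, following He~\cite{He2016}.

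Concretely, we choose $\widehat f,\widehat g$ adapted to thin caps near the sphere $|\xi|^2+|\eta|^2=\lambda^2$ and vary $\lambda$ with $x$. The aim is to produce $\mathcal B^\alpha_*(f,g)\gtrsim|x|^{-(2n-1)/2-\alpha}$ on a set whose measure is comparable to that of the corresponding annulus. Tallying norms and level sets then yields $\alpha\ge\frac{2n-1}{2r}-\frac{2n-1}{2}$.

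The main obstacle is this last step. We must choose $f,g$, or a family of modulated test functions, so that the oscillatory phases along the diagonal align and the decay rate $(2n-1)/2$, coming from one radial direction and the $2n-1$ curved directions of the $(2n-1)$-sphere, is attained without destructive interference. If this construction cannot be completed, we will invoke the argument of \cite[Proposition 4.1]{He2016} directly, since the statement is quoted from there.
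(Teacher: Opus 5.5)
The paper gives no proof of this proposition; it is quoted from He~\cite{He2016}. The part of your argument that you actually carry out is correct. With $\widehat f=\widehat g\equiv 1$ on the ball of radius $2$, the Bessel asymptotics give $\mathcal B^\alpha_*(f,g)(x)\gtrsim |x|^{-n-\alpha-1/2}$ for $|x|\ge 10$, hence $\alpha\ge n(\frac1r-1)-\frac12$. This matches the statement only at $r=\frac12$, where both give $\alpha\ge n-\frac12$. For $r\ge 1$ the statement is vacuous, since $\alpha\ge 0$. On the whole range $\frac12<r<1$ your condition is strictly weaker, because
\[
\tfrac{2n-1}{2}\Big(\tfrac1r-1\Big)-\Big(n\Big(\tfrac1r-1\Big)-\tfrac12\Big)=\tfrac12\Big(2-\tfrac1r\Big)>0 .
\]
Rescaling cannot help. $\mathcal B^\alpha_*$ commutes with dilations, and the $L^p\times L^q\to L^{r,\infty}$ inequality is dilation invariant when $\frac1r=\frac1p+\frac1q$, so approximate identities at scale $\epsilon$ give exactly the same condition as one fixed Schwartz pair. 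Hence the whole content of the proposition for $\frac12<r<1$ lies in the ``Knapp/Kakeya focusing example,'' which you never construct. That is a genuine gap. Saying that you will otherwise invoke \cite[Proposition 4.1]{He2016} means the proposal proves nothing beyond the citation the paper already gives.

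The sketch of that missing step is also not right as written. Suppose a fixed pair $f\in L^p$, $g\in L^q$ satisfied $\mathcal B^\alpha_*(f,g)\gtrsim |x|^{-(2n-1)/2-\alpha}$ on a fixed proportion of every large annulus. Then membership in $L^{r,\infty}(\mathbb R^n)$ would force $\alpha\ge \frac nr-\frac{2n-1}{2}$, not $\frac{2n-1}{2r}-\frac{2n-1}{2}$.

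At $r=\frac12$ this would say $\alpha\ge n+\frac12$, which is false. For $\alpha>n-\frac12$ the kernel has the radial decreasing majorant $(1+|w|)^{-(2n+1)/2-\alpha}\in L^1(\mathbb R^{2n})$. Hence $\mathcal B^\alpha_*(f,g)\lesssim Mf\cdot Mg$, which is bounded from $L^1\times L^1$ to $L^{1/2,\infty}$. So that decay profile is not attainable in general.

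Your heuristic attributing the rate to ``one radial direction and the $2n-1$ curved directions'' is also confused. Stationary phase in the $2n-1$ curved directions, combined with the radial singularity, produces exactly the kernel decay $|z|^{-(2n+1)/2-\alpha}$ that you already used on the diagonal.

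What has to be supplied is a parameter-dependent family of inputs, for example wave packets, with $\lambda$ chosen depending on $x$. You then need an explicit computation of their $L^p$ and $L^q$ norms and of the measure of the level sets of $\mathcal B^\alpha_*$. Only such a computation can produce the slope $\frac{2n-1}{2}$ in $\frac1r$.
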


\noindent{\bf Remark.}
This result does not present a restriction on $\alpha$ when $r\ge 1$; however, when $r<1$, it 
says that that if  
\[
\mathcal B_*^\alpha: L^p(\mathbb R^n)\times L^q(\mathbb R^n)\to L^{r,\infty}(\mathbb R^n)
\]
then $\alpha$ must satisfy
\[
\alpha \ge \frac{2n-1}{2}\Big(\frac1r-1\Big).
\]

 It is not hard to see, either by considering the same examples on the $n$-torus, or even by 
a bilinear transference principle for maximal operators \cite[Theorem 2]{GrafakosHonzik} that 
Proposition~\ref{prop:necessary_bilinear_BR} also holds when $\mathbb R^n$ is replaced by the 
$n$-torus 
$\mathbb T^n$. Using this fact and Theorem~\ref{thm1} we obtain the following result.

\begin{theorem}
\label{thm:failure-ae-br}
Let $n>1$, $1\le p,q<\infty$,    $\alpha\ge 0$ and suppose $r=(1/p+1/q)^{-1}<1$. Define
\[
\mathcal S^\alpha
:=
\left\{
(p,q,r)\in(1,2]\times(1,2]\times\big(\tfrac12,1\big)
\,:\,
\frac1r=\frac1p+\frac1q
\ \text{and}\
\alpha<\frac{2n-1}{2}\Big(\frac1r-1\Big)
\right\}.
\]

Then if $(p,q,r)\in\mathcal S^\alpha$, then there exist functions $f\in L^p(\mathbb T^n)$ and $g\in L^q(\mathbb T^n)$ such that the limit
\[
\lim_{m\to\infty}\mathcal B^\alpha_m(f,g)(x)
\]
does not exist on a set of positive measure in\/ $\mathbb T^n$.
\end{theorem}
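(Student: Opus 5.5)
The plan is to argue by contradiction, combining Theorem~\ref{thm1} with the torus version of Proposition~\ref{prop:necessary_bilinear_BR}. Fix $(p,q,r)\in\mathcal S^\alpha$ and suppose, contrary to the claim, that for every $f\in L^p(\mathbb T^n)$ and $g\in L^q(\mathbb T^n)$ the limit $\lim_{m\to\infty}\mathcal B^\alpha_m(f,g)(x)$ exists for almost every $x\in\mathbb T^n$. We then check the hypotheses of Theorem~\ref{thm1} in the setting $G=M=\mathbb T^n$ with translation action and normalized Lebesgue measure. The exponents are admissible, since $1<p,q\le 2$ and $\tfrac12<r<1$ with $1/r=1/p+1/q$.

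\textbf{Step 1: translation invariance.} Each $\mathcal B^\alpha_m$ commutes with simultaneous translations. This is immediate from the multiplier form: $\widehat{\tau_y f}(k)=e^{-2\pi i k\cdot y}\widehat f(k)$, and the factor $e^{-2\pi i(k+\ell)\cdot y}$ combines with the exponential $e^{2\pi i(k+\ell)\cdot x}$ to give $\mathcal B^\alpha_m(f,g)(x-y)$.

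\textbf{Step 2: boundedness of each $\mathcal B^\alpha_m$.} We need $L^p\times L^q\to L^r$ bounds, with constants allowed to depend on $m$. For fixed $m$ the multiplier is supported in the finite set $|k|,|\ell|\le m$, so
\[
\mathcal B^\alpha_m(f,g)(x)=\sum_{|k|,|\ell|\le m} c_{k,\ell}\,\widehat f(k)\,\widehat g(\ell)\,e^{2\pi i(k+\ell)\cdot x},
\qquad |c_{k,\ell}|\le 1 .
\]
Using $|\widehat f(k)|\le\|f\|_{L^1}\le\|f\|_{L^p}$ and the analogous bound for $g$, we get
\[
\|\mathcal B^\alpha_m(f,g)\|_{L^\infty}\le C_m\|f\|_{L^p}\|g\|_{L^q}.
\]
This gives an $L^r$ bound because the measure is finite. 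The same formula defines $\mathcal B^\alpha_m$ on all of $L^p\times L^q$, extending the definition from trigonometric polynomials.

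\textbf{Step 3: the contradiction.} With Steps 1 and 2 and the assumed a.e.\ convergence, Theorem~\ref{thm1} yields the weak-type estimate
\[
\mathcal B^\alpha_*: L^p(\mathbb T^n)\times L^q(\mathbb T^n)\to L^{r,\infty}(\mathbb T^n).
\]
Proposition~\ref{prop:necessary_bilinear_BR}, transferred to $\mathbb T^n$ as indicated in the paragraph before the statement, then forces $\alpha\ge\frac{2n-1}{2}(\frac1r-1)$. This contradicts $(p,q,r)\in\mathcal S^\alpha$. Hence there is some pair $(f,g)$ for which a.e.\ convergence fails, i.e.\ the limit fails to exist on a set of positive measure.

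\textbf{Main obstacle.} The delicate point is the periodic version of He's necessary condition. This could be done by testing the torus maximal operator on localized versions of He's counterexamples, such as functions concentrated near the origin. Alternatively, it could use the maximal bilinear transference principle \cite[Theorem 2]{GrafakosHonzik}, which passes a weak-type bound from $\mathbb T^n$ to $\mathbb R^n$ for the dilated multipliers $(1-(|\xi|^2+|\eta|^2)/\lambda^2)_+^\alpha$. The transference route requires some continuity of the multiplier at lattice points and a restriction to integer $\lambda=m$, which a rescaling argument should handle. I will take this torus version as given, as the paper indicates.
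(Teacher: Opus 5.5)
Your proposal is correct and follows the paper's own argument. You verify translation invariance, use Theorem~\ref{thm1} under the assumed a.e.\ convergence to get the weak-type bound, and contradict the $\mathbb T^n$ version of Proposition~\ref{prop:necessary_bilinear_BR}, which you, like the paper, take as given. Your Step 2 is a useful addition: the $m$-dependent $L^p\times L^q\to L^\infty$ bound from the finite Fourier support gives an $L^r$ bound on the probability space $\mathbb T^n$, which supplies a hypothesis of Theorem~\ref{thm1} that the paper's proof leaves implicit.
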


\begin{proof}
We apply Theorem~\ref{thm1} with $M=G=\mathbb T^n$. The group $G$ acts on the space $M$ in terms of the periodic translations
Let $(\tau_y f)(x):=f(x-y)$ for  $y\in\mathbb T^n$.
Then the Fourier coefficients satisfy
\[
\widehat{\tau_y f}(k)=e^{-2\pi i k\cdot y}\,\hat f(k),\qquad
\widehat{\tau_y g}(\ell)=e^{-2\pi i \ell\cdot y}\,\hat g(\ell).
\]
A straightforward calculation yields: for $x\in \mathbb T^n$ we have 
\begin{align*}
\mathcal B^\alpha_m(\tau_y f,\tau_y g)(x)
&=\sum_{k,\ell\in\mathbb Z^n}
\Bigl(1-\frac{|k|^2+|\ell|^2}{m^2}\Bigr)_+^{\alpha}
\widehat{\tau_y f}(k)\widehat{\tau_y g}(\ell)\,
e^{2\pi i (k+\ell)\cdot x}\\
&=\sum_{k,\ell\in\mathbb Z^n}
\Bigl(1-\frac{|k|^2+|\ell|^2}{m^2}\Bigr)_+^{\alpha}
e^{-2\pi i k\cdot y}\hat f(k)\,e^{-2\pi i \ell\cdot y}\hat g(\ell)\,
e^{2\pi i (k+\ell)\cdot x}\\
&=\sum_{k,\ell\in\mathbb Z^n}
\Bigl(1-\frac{|k|^2+|\ell|^2}{m^2}\Bigr)_+^{\alpha}
\hat f(k)\hat g(\ell)\,
e^{2\pi i (k+\ell)\cdot (x-y)}\\
&=\mathcal B^\alpha_m(f,g)(x-y)
=\tau_y \mathcal B^\alpha_m(f,g)(x).
\end{align*}
Hence $\mathcal B^\alpha_m$ is distributive  in accordance with   Definition~\ref{Def8888}.
 
Suppose, contrary to the claim, that for every $f\in L^p(\mathbb T^n)$ and
$g\in L^q(\mathbb T^n)$ the limit $\lim_{m\to\infty}\mathcal B^\alpha_m$ exists for almost every
$x\in\mathbb T^n$. Under this assumption, Theorem~\ref{thm1} applies with $M=\mathbb T^n$ and
yields the weak-type estimate
\[
\mu\!\left(\bigl\{x\in\mathbb T^n:\ \mathcal B^\alpha_*(f,g)(x)>\alpha\bigr\}\right)
\le
\frac{C}{\alpha^r}\,\|f\|_{L^p(\mathbb T^n)}^{\,r}\,\|g\|_{L^q(\mathbb T^n)}^{\,r},
\qquad \alpha>0,
\]
for all $f\in L^p(\mathbb T^n)$ and $g\in L^q(\mathbb T^n)$.

In particular, $B^\alpha_*(f,g)$ is of weak type $(p,q;r)$, and hence the family
$\{B^\alpha_m(f,g)\}_{m\in\mathbb N}$ is uniformly bounded from $L^p(\mathbb T^n)\times L^q(\mathbb T^n)$ to
$L^{r,\infty}(\mathbb T^n)$.  
But this would contradict the assertion of Proposition~\ref{prop:necessary_bilinear_BR} when 
$\mathbb R^n$ is replaced by $\mathbb T^n$.

 Therefore,
the assumption of almost everywhere convergence must be false, and there must exist
$f\in L^p(\mathbb T^n)$ and $g\in L^q(\mathbb T^n)$ for which the sequence
$\{\mathcal B^\alpha_m(f,g)\}_{m\in\mathbb N}$ fails to converge almost everywhere.
\end{proof}

\section{An application in differentiation theory}
We apply our results to a bilinear maximal operator generated by dyadic averages on 
the two-dimensional torus $\mathbb T^2$. This result is local and can easily be extended to $\mathbb R^2$; the only reason we work on the torus is to make use of the compactness of the ambient 
space required by our theorems. 
Although the kernels involved are bounded and compactly supported, the resulting maximal operator fails to satisfy weak $L^{\frac{1}{2}}$ estimates. This obstruction precludes a.e.  convergence for the associated bilinear averages, by Theorem~\ref{thm3}. 

Let $G=M=\mathbb T^2=[0,1)^2$ equipped with   normalized Lebesgue measure $\omega_G=\mu$.
For $k \in\mathbb N$, let 
$I_k=[0,2^{-k}]$   be the dyadic interval of length $2^{-k}$ whose left endpoint is $0$.  

For   $(k,\ell) \in \mathbb N\times \mathbb N$ define the kernels
\[
K_{k,\ell}(u,v)
:= \bigg( \frac{\mathbf 1_{I_k\times I_\ell }(u)}{|I_k\times I_\ell |}  - 
\frac{\mathbf 1_{ (I_k\cap I_\ell )\times (I_k\cap I_\ell)}(u) }{|(I_k\cap I_\ell )\times (I_k\cap I_\ell )|} \bigg)
\bigg( \frac{\mathbf 1_{I_k\times I_\ell }(v)}{|I_k\times I_\ell |}  - 
\frac{\mathbf 1_{ (I_k\cap I_\ell )\times (I_k\cap I_\ell)}(v) }{|(I_k\cap I_\ell )\times (I_k\cap I_\ell )|} \bigg)
\]
for $  (u,v)\in \mathbb T^2\times \mathbb T^2.$ We note that the functions  
$\frac{\mathbf 1_{I_k\times I_\ell }(u)}{|I_k\times I_\ell |}\frac{\mathbf 1_{I_k\times I_\ell }(v)}{|I_k\times I_\ell |}$ play a fundamental role in biparameter harmonic analysis and were 
first studied in \cite{GLTP} as kernels of bilinear operators.
Then $K_{k,\ell}\in L^\infty(\mathbb T^2\times \mathbb T^2)$ and
\[
\|K_{k,\ell}\|_{L^\infty}\le \bigg(\frac{1}{|I_k | | I_{\ell}|} +  \frac{1}{|I_k\cap I_\ell |^2   } \bigg)^2<\infty.
\]

Now for $f,h\in L^1(\mathbb T^2)$, define
  the translation--invariant  bilinear operators
\[
T_{k,\ell}(f,h)(x)
:=\iint_{\mathbb T^2\times \mathbb T^2}
K_{k,\ell}(x-y,\,x-z)\,
f(y)\,h(z)\,dy\,dz ,
\]
which can also be written as  
\begin{align*}
&\left(
 \frac{1}{|I_k\times I_{\ell}| }  \int_{ I_k\times I_{\ell}} f(x-y)\,dy
 -  \frac{1}{| (I_k\cap I_\ell ) |^2 } \int_{ (I_k\cap I_\ell )\times (I_k\cap I_\ell )} f(x-y)\,dy
 \right) \\
&\hspace{1in}\left(
 \frac{1}{|I_k\times I_{\ell}| } \int_{ I_k\times I_{\ell}} h(x-z)\,dz
 -  \frac{1}{| (I_k\cap I_\ell ) |^2 } \int_{ (I_k\cap I_\ell )\times (I_k\cap I_\ell )} h(x-z)\,dz
 \right) . 
\end{align*} 
Finally, we consider  the associated maximal operator
\[
T^*(f,h)(x)
=\sup_{k,\ell\ge 1}\,|T_{k,\ell}(f,h)(x)|
\]
defined for integrable functions on the torus. 
 
\begin{proposition}\label{App3}
    Then there exist functions $f_0 ,h_0 \in L^1(\mathbb T^2)$, such that   
    $$
    \limsup_{\min (k,\ell) \to \infty}| T_{k,\ell}(f_0,h_0)(x)|=\infty
    $$ 
    on a set of positive measure.
\end{proposition}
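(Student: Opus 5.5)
The plan is to argue by contradiction through Theorem~\ref{thm3}, testing the resulting weak-type $L^{1/2}$ estimate on the pair of Dirac masses $\nu_1=\nu_2=\delta_0$. Suppose that for every $f,h\in L^1(\mathbb T^2)$ the quantity $\limsup_{\min(k,\ell)\to\infty}|T_{k,\ell}(f,h)(x)|$ is finite on a set of positive measure.

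\emph{Setting up Theorem~\ref{thm3}.} Fix a large integer $K$ and enumerate the countable family $\{T_{k,\ell}: k,\ell\ge K\}$ as a single sequence $T_m$, arranged so that $\min(k,\ell)\to\infty$ along the enumeration. Each $T_{k,\ell}$ has kernel $K_{k,\ell}\in L^\infty(\mathbb T^2\times\mathbb T^2)$ of the form required in Theorem~\ref{thm3}, so it maps $L^1\times L^1$ boundedly into $\mathcal C(\mathbb T^2)$. The assumed finiteness of the $\limsup$ is exactly the hypothesis of Theorem~\ref{thm3} for this enumeration. Theorem~\ref{thm3} then gives a constant $A$, depending on $K$, such that for all $\alpha>0$
\[
\mu\big(\{x: \sup_{k,\ell\ge K}|T_{k,\ell}(\delta_0,\delta_0)(x)|>\alpha\}\big)\le \frac{A}{\sqrt{\alpha}}.
\]

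\emph{Computing the Dirac case.} By the kernel formula,
\[
T_{k,\ell}(\delta_0,\delta_0)(x)=K_{k,\ell}(x,x)=\Big(2^{k+\ell}\mathbf 1_{I_k\times I_\ell}(x)-2^{2\max(k,\ell)}\mathbf 1_{I_{\max(k,\ell)}^2}(x)\Big)^2,
\]
since $I_k\cap I_\ell=I_{\max(k,\ell)}$. On the set $(I_k\times I_\ell)\setminus I_{\max(k,\ell)}^2$ this equals $2^{2(k+\ell)}$.

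\emph{Lower bound on the level set.} Fix $n>2K$ and take $\alpha=2^{2n-1}$. Every pair $k,\ell\ge K$ with $k+\ell=n$ contributes its rectangle $I_k\times I_\ell$, minus a tiny square, to the level set. The union of these rectangles is a discrete hyperbolic cross anchored at the origin. The dyadic layers
\[
(I_k\setminus I_{k+1})\times I_{n-k},\qquad K\le k\le n-K,
\]
are pairwise disjoint and each has area $2^{-n-1}$. The excised squares have total area at most $\sum_k 2^{-2\max(k,n-k)}\lesssim 2^{-n}$, which only helps if I excise them from the layers directly. The step needing the most care is checking that the excised squares remove only a bounded fraction of each layer. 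Since $\max(k,n-k)\ge n/2$, each excised square has area at most $2^{-n}$, and this is comparable to a layer of area $2^{-n-1}$. So I will use the cleaner layers
\[
(I_k\setminus I_{k+1})\times (I_{n-k}\setminus I_{n-k+1}),
\]
each of area $2^{-n-2}$. These are disjoint from $I_{\max(k,\ell)}^2$, because there the first coordinate exceeds $2^{-k-1}\ge 2^{-\max(k,\ell)}$ whenever $k=\max(k,\ell)$, and symmetrically for $\ell$.

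This gives
\[
\mu(\{T^*>\alpha\})\ge (n-2K)\,2^{-n-2},
\]
while Theorem~\ref{thm3} demands $\mu(\{T^*>\alpha\})\le A\,2^{-n+1/2}$. Letting $n\to\infty$ with $K$ fixed produces a contradiction, because the lower bound exceeds $A\,2^{-n+1/2}$ by the unbounded factor $(n-2K)/(4\sqrt2\,A)$.

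Hence for some $f_0,h_0\in L^1$ the $\limsup$ is infinite almost everywhere, in particular on a set of positive measure. The main obstacle I expect is matching the $\limsup_{\min(k,\ell)\to\infty}$ formulation to the single-index hypothesis of Theorem~\ref{thm3}. My plan for this is the enumeration restricted to $k,\ell\ge K$ described above: along that enumeration any tail is eventually contained in $\{\min(k,\ell)\ge K'\}$, so the two notions of $\limsup$ agree.
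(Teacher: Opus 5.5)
Your overall strategy is the paper's: test the weak $L^{1/2}$ bound of Theorem~\ref{thm3} on $(\delta_0,\delta_0)$ and show that $\alpha^{1/2}\mu(\{T^*>\alpha\})$ grows like $\log\alpha$. Your dyadic layers along $k+\ell=n$ are a discrete version of the paper's region $\Omega_\lambda$.

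There is a small slip in the layer count. For even $n$, the layer with $k=\ell=n/2$ lies inside $I_{n/2}^2$, where $T_{k,k}\equiv 0$. Also, the first coordinate excludes $I_{\max(k,\ell)}^2$ when $k<\ell$, not when $k=\max(k,\ell)$. After discarding the diagonal term, the count $n-2K$ still holds.

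The genuine gap is the step you flagged yourself. No enumeration of $\{(k,\ell):k,\ell\ge K\}$ can have $\min(k,\ell)\to\infty$, because the row $\{(K,\ell):\ell\ge K\}$ is infinite. So every tail of the enumeration contains pairs with $\min(k,\ell)=K$. Tails contain $\{\min(k,\ell)\ge K'\}$ rather than being contained in it, which gives
\[
\limsup_{m\to\infty}|T_m(f,h)(x)|\ \ge\ \limsup_{\min(k,\ell)\to\infty}|T_{k,\ell}(f,h)(x)|,
\]
and this is the wrong direction. Each $T_{k,\ell}(f,h)$ is bounded, so the hypothesis of Theorem~\ref{thm3} for your enumeration amounts to $\sup_{k,\ell\ge K}|T_{k,\ell}(f,h)|<\infty$ on a set of positive measure. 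Your contradiction assumption says nothing about the infinitely many operators with $K\le\min(k,\ell)<K'$. So Theorem~\ref{thm3} cannot be invoked as written. The paper's own proof passes over this point silently: it applies Theorem~\ref{thm3} to $\sup_{k,\ell\ge 1}$ and then writes $\min(k,\ell)\to\infty$.

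There are two easy repairs.

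\emph{Restrict to a cone.} Take, for example, $S=\{(k,\ell):\min(k,\ell)\ge (k+\ell)/4\}$. Each set $\{(k,\ell)\in S:\min(k,\ell)<K'\}$ is finite, so along any enumeration of $S$ one really has $\min(k,\ell)\to\infty$. Hence $\limsup_m\le\limsup_{\min(k,\ell)\to\infty}$, and your assumption does yield the hypothesis of Theorem~\ref{thm3}. Your layer count for $k+\ell=n$, restricted to $n/4\le k\le 3n/4$ with $k\ne n/2$, still gives about $n/2$ disjoint layers of area $2^{-n-2}$, each carrying the value $2^{2n}$. So the contradiction with the bound $A\,2^{-n+1/2}$ survives.

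\emph{Keep the full family.} Alternatively, show that for each fixed $k$ one has $\sup_\ell|T_{k,\ell}(f,h)(x)|<\infty$ a.e.\ when $f,h\in L^1(\mathbb T^2)$, and symmetrically for fixed $\ell$. The rectangle averages are at most $2^k$ times the one-dimensional maximal function of $t\mapsto\int_{\mathbb T}|f(s,t)|\,ds$ evaluated at $x_2$. The square averages are controlled by the Hardy--Littlewood maximal function. Countably many null sets then upgrade finiteness of $\limsup_{\min(k,\ell)\to\infty}$ to finiteness of $\sup_{k,\ell\ge1}$, after which Theorem~\ref{thm3} applies to the whole family.
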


\begin{proof}
We start by showing that $T^*$ fails to map $L^1(\mathbb T^2)\times L^1(\mathbb T^2)$ into
$L^{1/2,\infty}(\mathbb T^2)$.
Notice that $T_{k,\ell}$ vanishes when $k=\ell$, so we may assume that $k\neq\ell$. 

We consider the Dirac mass $\delta_0$ at the origin and the action of $T_{k,\ell}$ on the pair $(\delta_0,\delta_0)$. 
We work with the subset of $\mathbb T^2$ given by the set of all $(x_1,x_2)$ with 
$$
0<   2x_1<x_2<1.
$$
Pick $k_0$ and $\ell_0$ natural numbers such that 
$$
2^{-k_0} \le x_1<2^{-k_0+1}, \qquad 2^{-\ell_0} \le x_2<2^{-\ell_0+1}.
$$
Then 
\[
T^*(\delta_0,\delta_0)(x_1,x_2)\ge  (2^{k_0+\ell_0-2})^2\ge   \frac{2^{-4}}{(x_1x_2)^2}.
\]

Let
\[
\Omega_\lambda
:=\Bigl\{(x_1,x_2)\in [0,1)^2:\ 0<2x_1<x_2<1,\ \ x_1x_2<(2\lambda)^{-1/2}\Bigr\},
\qquad \lambda>2.
\]
For   fixed \(x_2\in(0,1)\), the constraints give
\[
0<x_1<\min\Bigl(\frac{x_2}{2},\frac{(2\lambda)^{-1/2}}{x_2}\Bigr), 
\]
hence the measure of \(\Omega_\lambda\) is
\[
|\Omega_\lambda|
=\int_{0}^{1 }\min\Bigl(\frac{x_2}{2},\frac{(2\lambda)^{-1/2}}{x_2}\Bigr)\,dx_2.
\]
Note that 
\[
\frac{x_2}{2}\ge\frac{(2\lambda)^{-1/2}}{x_2}
\quad\Longleftrightarrow\quad
x_2 \ge (2/\lambda)^{1/4}.
\]
 
Now since \( \lambda > 2\) we have $(2/\lambda)^{1/4}<1 $, and so
\[
|\Omega_\lambda|
 \ge  \int_{(2/\lambda)^{1/4}}^{1 }\frac{(2\lambda)^{-1/2}}{x_2}\,dx_2 \\
 = \frac{1}{4 \sqrt{2}} \frac{1}{\sqrt{\lambda}} \ln \Big( \frac {\lambda}{2} \Big).
\]
Consequently, 
\[
\lambda^{1/2}|\Omega_\lambda|
\ge\frac{1}{8}\ln \Big( \frac {\lambda}{2} \Big)\to \infty
\qquad\text{as }\lambda\to\infty.
\]

This gives
\[
\big\| T^*(\delta_0,\delta_0)\|_{L^{1/2,\infty}}=\sup_{\lambda>0}\ \lambda^{1/2}
\big|\{x\in\mathbb T^2:\ T^*(\delta_0,\delta_0)(x)>\lambda\}\big|
\ge \sup_{\lambda>2} \lambda^{1/2}|\Omega_\lambda|
=\infty,
\]
and shows that
\[
T^*(\delta_0,\delta_0)\notin L^{1/2,\infty}(\mathbb T^2).
\]

Then by Theorem~\ref{thm3},  there exist $f_0,h_0\in L^1(\mathbb T^2)$ such that 
$$
\limsup_{\min(k,\ell) \to \infty} T_{k,\ell}(f_0,h_0)=\infty
$$
 on  a set of positive measure. This concludes the proof of Proposition~\ref{App3}. 
\end{proof}

One concludes that the Lebesgue differentiation property
\begin{equation}\label{LD99P}
\lim_{\min (k,\ell) \to \infty}T_{k,\ell}(f ,h ) = 0 \qquad \textup{a.e.}
\end{equation}
fails on $L^1\times L^1$. The operator $T^*$ in Proposition~\ref{App3} is controlled by 
  the bilinear strong maximal function studied in \cite{GLTP}, which is bounded 
  from $L^p\times L^q \to L^r$, when $1<p,q \le \infty$ and $1/r=1/p+1/q$.  In the  case where $p,q>1$, 
  by standard arguments, the Lebesgue differentiation property \eqref{LD99P} holds.

Naturally, there is nothing special about the case $d=2$ other than the 
simplicity of the notation. In fact, in Proposition~\ref{App3}, 
the space $\mathbb T^2$ can be replaced by $\mathbb T^d$ for any $d\ge 2$.

\end{document}